 \title[]{Uniqueness of some cylindrical tangent cones to special Lagrangians}
\author[T. C. Collins]{Tristan C. Collins}
  \email{tristanc@mit.edu}
\thanks{T.C.C is supported in part by NSF CAREER grant DMS-1944952 and an Alfred P. Sloan Fellowship. } 
 \author[Y. Li]{Yang Li}
  \email{yangmit@mit.edu}
  \thanks{Y. L. is a current Clay Research Fellow and a CLE Moore
Instructor at MIT. }
    \address{Department of Mathematics, Massachusetts Institute of Technology, 77 Massachusetts Avenue, Cambridge, MA 02139}
  \dedicatory{Dedicated to H. Blaine Lawson Jr., with admiration, on the occasion of his 80th birthday.}
\theoremstyle{plain}
\newtheorem{thm}{Theorem}[section]
\newtheorem{prop}[thm]{Proposition}
\newtheorem{defn}[thm]{Definition}
\newtheorem{lem}[thm]{Lemma}
\newtheorem{cor}[thm]{Corollary}
\theoremstyle{definition}
\newtheorem{ex}[thm]{Example}
\newtheorem{rk}[thm]{Remark}
\numberwithin{equation}{section}
\newcommand{\del}{\partial}
\newcommand{\dbar}{\overline{\del}}
\newcommand{\ddb}{\sqrt{-1}\del\dbar}
\newcommand{\bC}{\mathbb{C}}
\newcommand{\bR}{\mathbb{R}}
\newcommand{\cM}{\mathcal{M}}
\newcommand{\cH}{\mathcal{H}}
\newcommand{\ux}{\underline{x}}
\newcommand{\be}{\begin{equation}}
\newcommand{\bea}{\begin{eqnarray}}
\newcommand{\eea}{\end{eqnarray}} 
\newcommand{\ee}{\end{equation}}
\newcommand{\bfC}{\mathbf{C}}
\newcommand{\Cone}{{\rm Cone}}
\newcommand{\mfC}{\mathfrak{C}}
\newcommand{\mfH}{\mathfrak{H}}
\newcommand{\cK}{\mathcal{K}}
\newcommand{\uC}{\underline{C}}
\newcommand{\hatN}{\widehat{N}}
\newcommand{\hatmfC}{\widehat{\mathfrak{C}}}
\newcommand{\cB}{\mathcal{B}}
\newcommand{\cF}{\mathcal{F}}
\newcommand{\cV}{\mathcal{V}}
\renewcommand{\leq}{\leqslant}
\renewcommand{\geq}{\geqslant}
\renewcommand{\epsilon}{\varepsilon}
\renewcommand{\phi}{\varphi}
\begin{document}
\maketitle

\begin{abstract}
We show that if an exact special Lagrangian $N\subset \mathbb{C}^n$ has a multiplicity one, cylindrical tangent cone of the form $\mathbb{R}^{k}\times \bfC$ where $\bfC$ a special Lagrangian cone with smooth, connected link, then this tangent cone is unique provided $\bfC$ satisfies an integrability condition.  This applies, for example, when $\bfC= \bfC_{HL}^{m}$ is the Harvey-Lawson $T^{m-1}$ cone for $m\ne 8,9$.
\end{abstract}

\section{introduction}

Let $N\subset \mathbb{R}^{n+k}$ be a codimension $k$ minimal surface, with $0\in N$.  For any sequence $\lambda_i \rightarrow +\infty$ the rescaled surfaces $\lambda_iN$ converge subsequentially to a minimal cone $\mfC$, called a tangent cone of $N$ at $0$.  A fundamental problem in the study of minimal surfaces is to understand if $\mfC$ is unique, or if it depends on the sequence of rescalings.  Foundational results of Allard-Almgren \cite{AA} and Simon \cite{Simon83} establish the uniqueness of the tangent cone assuming at least one tangent cone is smooth and of multiplicity one away from $0$.  Results of this nature have important applications for the regularity theory of minimal surfaces; see \cite{CMLoj, BK, CES, DLSS, White, Taylor, Liang, SimonRec, SimonRec2, NV, NV2, PRiv, Sz2} and the references therein for related work. 

For tangent cones with non-isolated singularities the simplest example is that of a {\em cylindrical tangent cone} $\mfC = \mathbb{R}^k \times \bfC$, where $\bfC$ is a minimal cone with smooth cross-section.  In the case of codimension $1$ minimal hypersurfaces, Simon \cite{SimonUn} proves that such tangent cones are unique under some assumptions on the Jacobi fields normal to $\bfC$.  Recent work of Sz\'ekelyhidi \cite{Sz} addresses the uniqueness in some important cases where Simon's result does not apply, including when $\bfC$ is the Simons cone in $\mathbb{R}^8$.  The goal of the present work is to address the uniqueness of cylindrical tangent cones for a natural class of higher codimensional volume minimizers called {\em special Lagrangians}.

Let $(X,\omega,\Omega)$ be a Calabi-Yau manifold of real dimension $2n$, with symplectic form $\omega$ and holomorphic $(n,0)$-form $\Omega$.  The most basic example of a Calabi-Yau manifold, and the one relevant for our purposes, is $\mathbb{C}^n$ equipped with holomorphic coordinates $z_i= x_i + \sqrt{-1}y_i$, symplectic form $\omega= \sum_i dx_i\wedge dy_i$ and holomorphic volume form $\Omega = dz_1\wedge \cdots \wedge dz_n$.  In their landmark paper on calibrated geometries Harvey-Lawson \cite{HL} introduced the notion of a special Lagrangian, which is a $n$-dimensional submanifold $N\subset X$ satisfying
\[
\omega\big|_{N}=0, \qquad {\rm Im}(\Omega)\big|_{N}=0.
\]
Harvey-Lawson showed that special Lagrangian submanifolds are automatically volume minimizing in their homology class, and these manifolds now play a distinguished role in many aspects of Calabi-Yau geometry, particularly in mirror symmetry.  

Our first main theorem establishes the uniqueness of cylindrical tangent cones to special Lagrangians under an integrability assumption.

\begin{thm}\label{thm: main1}
Suppose $N\subset \mathbb{C}^n$ is a multiplicity $1$, closed integral current which is special Lagrangian,  and that $0 \in {\rm supp} N$. Suppose that $N$ is exact (see Definition~\ref{defn: exact}) and some tangent cone of $N$ at $0$ is of the form $\mfC= \mathbb{R}^k \times \bfC$ where $\bfC$ is a special Lagrangian cone with smooth, connected link and $\dim \bfC>2$.  Suppose in addition that $\bfC$ is integrable in the sense of Definition~\ref{defn: strongInt}. Then $\mfC$ is the unique tangent cone of $N$ at $0$.  
\end{thm}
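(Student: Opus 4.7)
The plan is to adapt the \L{}ojasiewicz-Simon framework of Simon \cite{SimonUn} and the refinements of Sz\'ekelyhidi \cite{Sz} to the higher-codimension special Lagrangian setting, using the exactness of $N$ to reduce the Lagrangian deformation problem to a single scalar PDE. Fix any sequence $\lambda_i\to\infty$ with $\lambda_i N\to \mfC$; for each small scale $r>0$ I would choose a translate $\mfC_{v(r)}:=v(r)+\mfC$ with $v(r)\in\bR^k$ best-approximating $r^{-1}N$. By Allard regularity, outside a small tubular neighborhood of the singular axis $\bR^k\times\{0\}$ the rescaled current $r^{-1}N$ is realized as a $C^{1,\alpha}$ normal graph over $\mfC_{v(r)}$, and since $N$ is exact Lagrangian this graph is the image of the differential $du_r$ of a scalar potential $u_r$ defined on an annular piece of $\mfC_{v(r)}$.

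The special Lagrangian condition $\mathrm{Im}\,\Omega|_N=0$ becomes a fully nonlinear elliptic equation $F[u_r]=0$ whose linearization at $u_r=0$ is the Jacobi operator $L_\mfC=\Delta_{\bR^k}+L_\bfC$, with $L_\bfC$ having discrete spectrum on the smooth connected link of $\bfC$. Separation of variables organizes the asymptotic modes of $u_r$ in terms of Fourier modes along $\bR^k$ coupled to eigenfunctions of $L_\bfC$ and associated radial homogeneities, exactly the data whose degeneracies must be treated.

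The central analytic input is a \L{}ojasiewicz-Simon inequality for the Lagrangian angle functional (equivalently the renormalized volume) at $\mfC$ on annular domains, taken modulo the translation gauge $\{\mfC_v:v\in\bR^k\}$ and the integrable deformation space of $\bfC$. The integrability hypothesis ensures that every zero-homogeneity Jacobi field on the link of $\bfC$ is tangent to an honest family of nearby special Lagrangian cones, so these neutral modes are absorbed into a finite-dimensional manifold of model cylinders rather than producing obstructions to convergence. The resulting estimate controls the $L^2$-distance of $r^{-1}N$ from this finite-dimensional family by a definite power of the first variation of the Lagrangian angle, in a form amenable to dyadic iteration. Iterating along $r_j=2^{-j}$ then shows that the chosen translates $\mfC_{v(r_j)}$ form a Cauchy sequence and $u_{r_j}\to 0$ in the appropriate norm, which yields uniqueness of the blow-down; the assumption $0\in\mathrm{supp}\,N$ forces the limiting translation to vanish, so the unique tangent cone is $\mfC$ itself.

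The hardest step will be establishing this \L{}ojasiewicz-Simon inequality on the non-compact cylindrical geometry in the presence of slowly decaying Fourier modes along $\bR^k$: such modes must be controlled through a three-annulus-type argument in the spirit of \cite{Sz}, and the higher-dimensional translation gauge requires a quantitative, scale-by-scale selection of $v(r)$ to eliminate the trivial translation modes from the estimate. A secondary difficulty is carrying out all of this analysis with the appropriate uniformity near the singular axis $\bR^k\times\{0\}$ of $\mfC$, where one must rely on $\varepsilon$-regularity for special Lagrangian currents together with the exact potential description to preserve graphicality down to a uniform scale.
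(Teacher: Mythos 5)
Your proposal hinges on a \L ojasiewicz--Simon inequality for the volume (or Lagrangian angle) functional at $\mfC$, taken modulo translations and the integrable deformations of $\bfC$, and you correctly flag this as the hardest step --- but this is precisely the step that is not available, and the paper does not take this route. The classical \L ojasiewicz--Simon argument \cite{Simon83} requires the cross-section of the cone to be smooth and compact so that the functional is analytic on a Banach manifold of sections; for a cylinder $\mathbb{R}^k\times\bfC$ the cross-section is singular and no such inequality is known (this is exactly why \cite{SimonUn} resorts to Hardt--Simon comparison surfaces and \cite{Sz} to a \emph{discrete} replacement of the inequality). The paper instead runs a direct three-case dyadic iteration (Lemmas~\ref{lem: case1},~\ref{lem: case2},~\ref{lem: case3}): at each scale either the canonical harmonic functions dominate and decay fast, or the volume excess on an annulus dominates and is summable by monotonicity, or a compactness/blow-up argument (Proposition~\ref{prop: fDecay}) reduces to the spectral decomposition of harmonic functions on $\mfC$, with the quadratic-growth modes absorbed by an $\mathfrak{su}(n)$ rotation together with a motion of $\Sigma$ in its moduli space (not by translations), and the modes of degree strictly between $0$ and $2$ controlled by the volume excess on the annulus $\{2^{1-b}\rho<R<2\rho\}$ via ${\rm VolEx}_N(s)=\int_{N\cap B_s}|\nabla^N\beta|^2R^{-n-2}$. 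Asserting the \L ojasiewicz--Simon inequality without a mechanism for proving it on this singular cross-section is a genuine gap, not a deferred technicality.

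A second, independent gap is your treatment of non-concentration near the axis $\mathbb{R}^k\times\{0\}$. Appealing to ``$\varepsilon$-regularity \ldots to preserve graphicality down to a uniform scale'' is circular: Allard regularity requires closeness to a multiplicity-one plane, which fails identically near the singular axis, and the whole difficulty is to rule out the excess of $N$ concentrating there. The paper's key use of exactness is not merely that it produces a scalar potential $f$ on the graphical region; it is that $\beta$ (with $d\beta=\tfrac12\lambda|_N$) and the $y_i$ are \emph{globally} defined (sub)harmonic functions on all of $N$, including near the singular set, and their scale-invariant $L^2$ norms quantitatively determine the radius $\tau_*$ with $\tau_*^2\sim\|\underline{\beta},\underline{y}\|_{L^2(N\cap B_{4\rho})}$ down to which $N$ remains a graph over $\mfC$, via the integrated identities $\beta=-\tfrac12 R^3\partial_R(f/R^2)$ and $y_i=-\partial f/\partial x_i$ (Proposition~\ref{prop: propOfSmall}). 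Without identifying these globally defined harmonic quantities and the propagation-of-smallness estimate they yield, your iteration cannot be closed: you would have no control on how the graphical region shrinks from one dyadic scale to the next, and hence no way to compare the norms appearing in successive steps of the iteration.
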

\begin{rk}
The proof of Theorem~\ref{thm: main1} also yields an explicit (but likely not optimal) estimate for the rate of convergence;  see~\eqref{eq: main1RateEst}.
\end{rk}

\noindent Roughly speaking, the special Lagrangian cone $\bfC$ is {\em integrable} if every infinitesimal deformation of $\bfC$ corresponding to a harmonic function with linear or quadratic growth integrates to a genuine deformation through special Lagrangian cones.  For example, $\bfC$ is integrable if $\bfC$ has the stronger property that every harmonic function on $\bfC$ of linear or quadratic growth is generated by the action of the automorphism group $SU(n-k)\ltimes \mathbb{C}^{n-k}$; we call such special Lagrangians {\em rigid}.  For experts in minimals surfaces, rigidity in the sense of this paper is analogous to conditions $\ddagger(a)$ and $\ddagger(b)$ in \cite[Page 4]{SimonUn}; in particular, we do not require any condition analogous to $\ddagger(c)$.

  An important example of rigid special Lagrangian cones are the Harvey-Lawson cones \cite{HL}
\[
\bfC^m_{HL}:= \bigg\{(z_1,\ldots,z_m) \in \mathbb{C}^m : |z_1|=\cdots =|z_m|, \quad {\rm Arg}(i^{m+1}z_1\cdots z_m)=0\bigg\}.
\]
for $m\ne 8,9$.  Thus, Theorem~\ref{thm: main1} yields

\begin{cor}\label{cor: main1}
Suppose $N\subset \mathbb{C}^{k+m}$ is a multiplicity $1$, closed integral current which is special Lagrangian,  and that $0 \in {\rm supp} N$.  Suppose that $N$ is exact and that some tangent cone of $N$ at $0$ is of the form $\mfC= \mathbb{R}^k \times \bfC_{HL}^{m}$. If $m\ne 8,9$ then $\mfC$ is the unique tangent cone of $N$ at $0$.
\end{cor}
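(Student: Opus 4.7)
The plan is to verify that for $m \neq 8, 9$ the Harvey-Lawson cone $\bfC_{HL}^m$ satisfies all the hypotheses of Theorem~\ref{thm: main1}, so that the corollary follows from a direct application. Three properties are essentially by inspection: $\bfC_{HL}^m$ is a special Lagrangian cone (this is the original calibration example in \cite{HL}), its link is a flat $(m-1)$-torus (hence smooth and connected for $m \geq 2$), and its dimension is $m$, so the hypothesis $\dim \bfC > 2$ automatically restricts to $m \geq 3$. The only substantive point is therefore the integrability condition of Definition~\ref{defn: strongInt}.

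By the discussion following Theorem~\ref{thm: main1}, it suffices to establish the stronger rigidity condition: every harmonic function on $\bfC_{HL}^m$ of linear or quadratic growth is generated by the action of $SU(m) \ltimes \mathbb{C}^m$. Separating variables on the cone, a homogeneous harmonic function of degree $d$ corresponds to an eigenfunction of the link Laplacian $\Delta_L$ with eigenvalue $d(d+m-2)$. Rigidity therefore reduces to a purely spectral problem: enumerate the eigenspaces of $\Delta_L$ for the two eigenvalues $\lambda_1 = m-1$ and $\lambda_2 = 2m$, and verify that they are exhausted by the Jacobi fields obtained by differentiating the $SU(m) \ltimes \mathbb{C}^m$-action along special Lagrangian orbits.

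Since $L$ is a flat torus with an explicit lattice structure inherited from its embedding into $S^{2m-1}$, the Laplace spectrum is described by a sum-of-squares formula over a rank-$(m-1)$ integer lattice, and the $\lambda_1$- and $\lambda_2$-eigenspaces are indexed by lattice points of prescribed squared length. On the other hand, differentiating the $SU(m)\ltimes \mathbb{C}^m$-action produces a group-theoretically computable candidate space of Jacobi fields, whose dimension can be read off from the representation theory of $SU(m)$ acting on the relevant spherical harmonics. The linear eigenspace $\lambda_1 = m - 1$ is accounted for by translations; the matching reduction to the group action for the linear mode is essentially automatic.

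The main obstacle, and the origin of the exceptional values $m=8,9$, is the quadratic eigenspace $\lambda_2 = 2m$. Here I would count lattice points of squared norm matching $2m$ and compare this against the dimension of the span of quadratic Jacobi fields generated by pairs of infinitesimal automorphisms; a diophantine accident produces extra lattice vectors—and hence extra harmonic functions outside the image of the group action—precisely at $m=8,9$. For all other $m \geq 3$, the dimension counts agree, establishing rigidity. This spectral computation for $\bfC_{HL}^m$ has been worked out in the literature (notably in Marshall's thesis and subsequent work of Haskins and others), and I would invoke it as a black box rather than redoing the diophantine analysis. Combined with the hypotheses on $N$, Theorem~\ref{thm: main1} then directly yields uniqueness of $\mfC$.
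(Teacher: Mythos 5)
Your proposal is correct and follows essentially the same route as the paper: reduce the corollary to rigidity of $\bfC_{HL}^m$ in the sense of Definition~\ref{defn: rigid}, verify rigidity by comparing the dimensions of the link eigenspaces $\mfH_{m-1}(T^{m-1})=2m$ and $\mfH_{2m}(T^{m-1})=m^2-m$ (for $m\ne 8,9$) against the $2m$ translations and the $(m^2-1)-(m-1)$ directions of the $SU(m)$-action modulo the $U(1)^{m-1}$ stabilizer, and then invoke Theorem~\ref{thm: main1}. The one caveat is that the authors state they could not find a reference for $\dim\mfH_{m-1}(T^{m-1})=2m$ for general $m$ (Marshall's table covers only $m\le 13$), so the linear-mode lattice count you describe as ``essentially automatic'' and propose to cite as a black box in fact has to be carried out by hand, via the diophantine analysis of the quadratic form $q(x)=m\sum_i x_i^2-\sum_{i,j}x_ix_j$ given in Lemma~\ref{lem: specHLCone}.
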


\noindent Note that when $m=3$ the work of Haskins shows that the cone $\bfC_{HL}^{3}$ is the unique {\em strictly stable} special Lagrangian $T^2$ cone \cite{Haskins}.

Finally, we remark that when $\dim \bfC \geq 5$ we obtain polynomial convergence to the tangent cone.  Recall that for compact sets $K_1, K_2$ and a bounded set $E$ we denote the Hausdorff distance in $E$ to be
\[
d^{H}(K_1, K_2;E) =  \sup\{{\rm dist}(x,y) : x\in K_1\cap \overline{E},\,\, y\in K_{2}\cap \overline{E}\}.
\]

\begin{thm}\label{thm: main2}
Under the assumptions of Theorem~\ref{thm: main1}, assume in addition that $\dim \bfC \geq 5$. Then there constants $C, \alpha >0$ such that
\[
d^{H}(\rho^{-1}N, \mfC; B_{1}) \leq C\rho^{\alpha}.
\]
for all $\rho$ sufficiently small.
\end{thm}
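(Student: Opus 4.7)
The plan is to run an improved iteration on dyadic annular scales around $0\in\mathrm{supp}\,N$ to upgrade the uniqueness provided by Theorem~\ref{thm: main1} into a polynomial convergence rate. Since $\mfC$ is the unique tangent cone, $\rho^{-1}N \to \mfC$ in Hausdorff distance in $B_1$. On each annulus $A_\rho = B_{2\rho}\setminus B_\rho$, Allard's regularity together with this smallness produces a normal graph representation $u_\rho$ of $N$ over $\mfC$ with $\|u_\rho\|_{C^{2,\beta}(A_\rho)}\to 0$ as $\rho\to 0$. The special Lagrangian condition then turns into a nonlinear elliptic equation $L_\mfC u_\rho = Q(u_\rho,\nabla u_\rho, \nabla^2 u_\rho)$, where $L_\mfC$ is the Jacobi operator on $\mfC$ and $Q$ is at least quadratic.

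Next I would decompose $u_\rho$ into eigenmodes of $L_\mfC$. Separating variables between $\mathbb{R}^k$ and the cone direction of $\bfC$, each homogeneous Jacobi field on $\mfC$ scales like $\rho^{p+\gamma}$, where $p\in\mathbb{Z}_{\geq 0}$ is the polynomial degree in the $\mathbb{R}^k$ variable and $\gamma$ is an indicial root of the Jacobi operator on $\bfC$. The integrability hypothesis of Theorem~\ref{thm: main1} kills all modes with $p+\gamma\leq 2$: these correspond to $\mathbb{C}^n$-translation, $SU$-rotation, dilation, and reorientation of the $\mathbb{R}^k$ factor, and are absorbed into the choice of $\mfC$ made in the uniqueness argument. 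The remaining part of $u_\rho$ is controlled by a three-annulus inequality in the spirit of Simon~\cite{SimonUn} and Sz\'ekelyhidi~\cite{Sz}, yielding
\[
\|u_\rho\|_{L^2(A_\rho)} \leq C\rho^\alpha
\]
for some $\alpha>0$ governed by the spectral gap between the critical indicial root $2$ and the next larger one. Elliptic bootstrapping on $L_\mfC u_\rho = Q(u_\rho,\nabla u_\rho, \nabla^2 u_\rho)$ upgrades this $L^2$ bound to a $C^0$ bound, which is exactly the claimed Hausdorff distance estimate.

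The role of the dimension assumption $\dim \bfC \geq 5$ is to guarantee that a positive spectral gap actually exists and can be exploited. The indicial roots of $L_\bfC$ have the form
\[
\gamma_\pm(\lambda)= \frac{-(\dim\bfC-2)\pm \sqrt{(\dim\bfC-2)^2+4\lambda}}{2},
\]
where $\lambda$ runs over the eigenvalues of the Jacobi operator on the link of $\bfC$. For small $\dim\bfC$, indicial roots can cluster at or just above $2$ and the iteration fails to close. Once $\dim\bfC\geq 5$, the formula produces a clean gap above the critical threshold and, simultaneously, the Sobolev embeddings on the link become strong enough to absorb the quadratic error $Q$ into the iteration. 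The main obstacle is precisely this absorption step: controlling $Q$ uniformly in $\rho$ while maintaining the spectral decomposition requires weighted Schauder estimates compatible with the conical geometry of $\mfC$, and it is here, rather than in the abstract linear spectral analysis, that the lower bound on $\dim \bfC$ is essentially used.
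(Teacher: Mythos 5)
There is a genuine gap, and it centers on a misidentification of where the hypothesis $\dim \bfC \geq 5$ enters. In the paper, the spectral/indicial-root analysis (the analogue of your three-annulus step, carried out in Proposition~\ref{prop: fDecay}) works identically for all $\dim \bfC > 2$ --- that is precisely why Theorem~\ref{thm: main1} holds without the dimension restriction. The indicial roots do not ``cluster at or just above $2$'' in low dimensions, and no Sobolev embedding on the link is invoked to absorb the quadratic error; that error is handled by the smallness of the graphing function in $C^2$ throughout. The dimension hypothesis is used instead in Proposition~\ref{prop: decayOfVolEx}, which establishes power-law decay of the normalized volume excess: the graphing function satisfies $|df|\lesssim r^{-2}\|\underline{\beta},\underline{y}\|_{L^2}$ near the singular ray, so comparing $\mathcal{H}^n(N)$ with $\mathcal{H}^n(\mfC)$ over the graphical region produces the integral $\int_0^1 r^{n-k-5}\,dr$ (the $r^{-4}$ from $|df|^2$ against the cone volume element $r^{n-k-1}dr$), which converges exactly when $n-k\geq 5$. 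This, combined with a Federer--Fleming isoperimetric comparison surface near $\{r\leq \tau\}$, is what yields ${\rm VolEx}_N(\rho)\leq C(M_1^2+M_2^2+{\rm VolEx}_N(2\rho)^{n/(n-1)})$ and hence geometric decay of the volume excess --- the new quantitative input that upgrades Theorem~\ref{thm: main1} to a rate.

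The second, related omission is that your iteration only treats the case in which $N$ is genuinely controlled by the linearized equation on $\mfC$. The central difficulty of the whole argument is that the graph representation exists only on $\{r>\tau\rho\}$ and the deviation of $N$ from $\mfC$ may concentrate near the singular set $\mathbb{R}^k\times\{0\}$, where no eigenmode decomposition is available. The paper's proof of Theorem~\ref{thm: main2} runs a trichotomy at each scale (Lemmas~\ref{lem: case1},~\ref{lem: case2'},~\ref{lem: case3'}): either the harmonic functions $\beta, y_i$ dominate and decay by the mean-value inequality, or the volume excess dominates and decays by Proposition~\ref{prop: decayOfVolEx}, or concentration is excluded and the spectral decay of Proposition~\ref{prop: fDecay} (in its $b=+\infty$ form) applies. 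Your proposal collapses to the third branch only, and as written the three-annulus inequality cannot close without first ruling out concentration --- which is exactly what the quantitative extension result Proposition~\ref{prop: propOfSmall} and the volume excess decay are for.
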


\begin{rk}
It may be possible to extend the rate estimate of Theorem~\ref{thm: main2} to the case when $\dim \bfC =4$ by modifying the proof of Proposition~\ref{prop: decayOfVolEx}.  The case of $\dim \bfC=3$ seems to require a different approach.
\end{rk}

\begin{rk}
Let us also remark on the case of tangent cones of the form $\mathbb{R}^k \times \bfC$ for $\dim \bfC=2$.  By hyperK\"ahler rotation, one can show that any special Lagrangian cone in $\mathbb{C}^2$ is necessarily a union of special Lagrangian planes intersecting at the origin (possibly with multiplicity).  Thus, if $\bfC$ is assumed to have connected link and $\mfC$ has multiplicity one, then in fact $\mfC$ is smooth and the uniqueness of the tangent plane follows from \cite{AA,Simon83}.  
\end{rk}

Broadly speaking, the general strategy goes back to work of Allard-Almgren \cite{AA} and Simon \cite{Simon83, SimonUn}. We would like to view $N$ as a small perturbation of the tangent cone $\mfC$ which is controlled by the linearized special Lagrangian graph equation (ie. the Laplace equation) on $\mfC$.  The main difficulty is that $\mfC$ does not have isolated singularities and so the linearized problem may not accurately approximate $N$.  One needs to prevent the deviation of $N$ from $\mfC$ concentrating near the singular set  $\mathbb{R}^k\times \{0\} \subset \mfC$.  In Simon's work \cite{SimonUn} this is overcome by constructing appropriate comparison surfaces using the work of Hardt-Simon \cite{HS}.  Sz\'ekelyhidi \cite{Sz} employs a similar, but necessarily more elaborate argument exploiting a discrete replacement of the \L ojasiewicz inequality.

Let us give a heuristic overview of the current paper, which is inspired by Simon's paper \cite{SimonUn}.  Fix coordinates $(z,z') \in \mathbb{C}^k \times \mathbb{C}^{n-k}$ and write $z_i = x_i +\sqrt{-1}y_i$.  Assume that some tangent cone of $N$ at $0$ is of the form $\mfC= \mathbb{R}^k \times \bfC$ where $\mathbb{R}^k = \{y_i=0: 1\leq i \leq k\}$.  The {\em normalized volume excess} is defined to be
\begin{equation}\label{eq: volExDefn}
\begin{aligned}
{\rm VolEx}_{N}(r) &= r^{-n}\mathcal{H}^n(N\cap B_{r}) - \left(\lim_{s\rightarrow 0^+} s^{-n}\mathcal{H}^n(N\cap B_{s})\right)\\
&= r^{-n}\mathcal{H}^n(N\cap B_{r}) - r^{-n} \mathcal{H}^n(\mfC \cap B_{r})
\end{aligned}
\end{equation}
where $\mathcal{H}^{n}$ denotes the $n$-dimensional Hausdorff measure.  By volume monotonicity for minimal surfaces, ${\rm VolEx}_{N}(r)$ is an increasing function of $r$. 

Let us consider the simplified model in which $N$ is a graph over $\mfC= \mathbb{R}^k\times \bfC$ of a $1$-form $df$ in suitable Darboux coordinates.   Here $f: \mfC \rightarrow \mathbb{R}$ is a function solving a uniformly elliptic PDE which is a small perturbation of the Laplace equation.  Since some tangent cone to $N$ at $0$ is $\mfC$, we may assume that $f$ is $C^{\infty}$ small in $\mfC \cap B_{2} \cap \{|z'|>2\tau\}$ for some fixed, small $\tau>0$.  The assumption that $N$ is exact means that there is a function $\beta:N\rightarrow \mathbb{R}$ (say normalized to have $\beta(0)=0$) such that $d\beta=\frac{1}{2}\lambda$ where $\lambda$ is the Liouville form on $\mathbb{C}^n$.    One key observation is that the functions $\beta, y_i$ are harmonic functions on $N$, which can be expressed in terms of $f$ as
\begin{equation}\label{eq: introBYrel}
\beta= -\frac{1}{2}R^2 \frac{\del}{\del R} \left(\frac{f}{R^2}\right) \qquad y_i = -\frac{\del f}{\del x_i}
\end{equation}
where $R= \sqrt{|z|^2+|z'|^2}$ is the radial function on $\mathbb{C}^n$.  Denote by $\| \cdot \|_{L^2}$ suitably defined scale invariant $L^2$ norms (see ~\eqref{eq: normDefn} for precise formulas).  By first integrating the formula for $\beta, y$ we obtain a bound 
\[
|f| < C\left(\|\beta\|_{L^2(N\cap B_4)} + \|y\|_{L^2(N\cap B_4)} + |z'|^2  \|f\|_{L^2(B_2\cap \{|z'|>2\tau\})}\right);
\]
see Proposition~\ref{prop: propOfSmall}. Thus, if 
\begin{equation}\label{eq: case1Intro}
\|\beta\|_{L^2(N\cap B_4)} + \|y\|_{L^2(N \cap B_4)} \leq C\|f\|_{L^2(B_2\cap \{|z'|>2\tau\})},
\end{equation}
 then $f$ is controlled on smaller scales by its $L^2$ norm on $\mfC \cap B_{2} \cap \{|z'|>2\tau\}$; this is a non-concentration type result which implies that $N$ is well controlled by the Laplace equation on $\mfC$.  
 
 The main decay result, Proposition~\ref{prop: fDecay} is obtained by using a blow-up argument to reduce to the spectral properties of $\mfC$.  Assuming $f$ is harmonic on $\mfC$, we decompose $f$ into a sum of homogeneous harmonic functions.  If the volume excess satisfies
\begin{equation}\label{eq: case2Intro}
{\rm VolEx}_{N}(2) \leq C\|f\|^2_{L^2(B_2\cap \{|z'|>2\tau\})}
\end{equation}
then the expansion of $f$ can only contain terms with homogeneous degree at least $2$.  Roughly speaking, the assumption of integrability means that the degree $2$ terms in the expansion of $f$ can be removed by modifying the model cylinder $\mfC \rightarrow \mfC'$, which is a perturbation of size $\sim \|f\|_{L^2(B_2\cap \{|z'|>2\tau\})}$.  Thus, we may assume that $f$ contains only terms of degree strictly larger than $2$.  But this implies that for some $0<\lambda<1$ we have
\begin{equation}\label{eq: introPLDecayf}
\|f\|_{L^2(B_{\frac{1}{2}}\cap \{|z'|>\tau\})} \leq \lambda\|f\|_{L^2(B_2\cap \{|z'|>2\tau\})}.
\end{equation}
If such an estimate holds at all scales, then an iteration easily implies the polynomial convergence of $\rho^{-1} N$ to $\mfC$, its unique tangent cone, as $\rho \rightarrow 0$.  Thus, we only need to address the possibility that either of~\eqref{eq: case1Intro} or ~\eqref{eq: case2Intro} fail. If~\eqref{eq: case1Intro} fails, then the elliptic regularity theory applied to $f$, together with~\eqref{eq: introBYrel}, implies a fast decay property
\[
\|\beta\|_{L^2(N\cap B_1)} + \|y\|_{L^2(N \cap B_1)} \leq \frac{1}{100} \left(\|\beta\|_{L^2(N\cap B_4)} + \|y\|_{L^2(N \cap B_4)} \right)
\]
provided $C$ is sufficiently large; see Lemma~\ref{lem: easyDecay}.  By assumption, $\beta, y$ control $f$ in this case.  If~\eqref{eq: case2Intro} fails, and we assume $\dim \bfC\geq 5$, we show in Proposition~\ref{prop: decayOfVolEx} that the volume excess decays;
\[
{\rm VolEx}_{N}\left(\frac{1}{2}\right) \leq \frac{1}{2} {\rm VolEx}_{N}(2). 
\]
Thus, $f$ is again controlled by a fast decaying quantity.  Applying this trichotomy iteratively yields Theorem~\ref{thm: main2}.

The main technical difficulties in making this heuristic argument rigorous are the following;
\begin{itemize}
\item[(i)] The potential function $f$ is only locally defined, and in general, is not defined near the singular set of $\mfC$.  Thus, the above iteration needs to include the statement that the potential continues to exist at smaller scales.  The necessary quantitative extension result for the potential is proved in Proposition~\ref{prop: propOfSmall}.
\item[(ii)] In the iteration we need quantitative control of the drift of the model cylinders $\mfC \rightarrow \mfC_1 \rightarrow \mfC_2 \cdots$.  This amounts to showing the summability of $\|f\|_{L^{2}}$ over dyadic scales.  When $\dim \bfC\geq 5$, the summability is not problematic since we obtain power law decay in all cases. However, when $\dim \bfC \leq 4$, we are not able to prove the power law decay of the volume excess when ~\eqref{eq: case2Intro} fails.  In this case the estimate
\[
\|f\|^2_{L^2(B_2\cap \{|z'|>2\tau\})} \leq C^{-1}{\rm VolEx}_{N}(2)
\]
is not necessarily summable over all scales (since small scales are counted many times).  We circumvent this issue by replacing ~\eqref{eq: case2Intro} with an effective version of the form
\[
{\rm VolEx}_{N}(2)-{\rm VolEx}_{N}(2 \cdot 2^{-b}) \leq C\|f\|^2_{L^2(B_2\cap \{|z'|>2\tau\})}
\]
for some fixed, large $b$.  This necessitates modifying~\eqref{eq: introPLDecayf} since we can no longer rule out the presence of homogeneous harmonic functions with degree less than $2$ in the expansion of $f$.
\end{itemize}

Let us now describe the outline of the paper.  Section~\ref{sec: basic} collects many of the basic facts we will need throughout the paper, including the deformation theory of special Lagrangian cones, the role of various canonical harmonic functions, and the existence of adapted Darboux coordinate systems.  Section~\ref{sec: basic} also contains the proof that the Harvey-Lawson cones $\bfC_{HL}^{m}$ are integrable (in fact, rigid) when $m\ne 8,9$.   Section~\ref{sec: pot} proves various results concerning the existence of local potentials for $N$, as a graph over $\mfC$, leading to the quantitative existence/non-concentration result Proposition~\ref{prop: propOfSmall}.  Finally, Section~\ref{sec: Decay} proves various results concerning the decay of scale invariant norms.  The main result of this section, Proposition~\ref{prop: fDecay}, establishes the decay of the potential when concentration can be ruled out.  Finally, at the end of Section~\ref{sec: Decay} we combine our results to prove Theorem~\ref{thm: main1} and Theorem~\ref{thm: main2}.  We have chosen to give a detailed proof of the more general (and more complicated) case contained in Theorem~\ref{thm: main1} and sketch the proof of Theorem~\ref{thm: main2}, which follows largely the same lines.  
\\
\\
{\bf Acknowledgements:} The authors are grateful to G\'abor Sz\'ekelyhidi for helpful comments on an earlier draft.

\subsection{Notation}
Throughout this paper we will use the following notation:
\begin{itemize}
\item $N$ is a closed integral current of multiplicity $1$, which is special Lagrangian, and hence area minimizing.  
\item We write  $\mathbb{C}^n = \mathbb{C}^k \times \mathbb{C}^{n-k}$.  We take standard complex coordinates $(z, z') \in \mathbb{C}^k \times \mathbb{C}^{n-k}$, and write $z_i = x_i + \sqrt{-1}y_i$ for $1\leq i \leq k$, and $z'_i = x'_i +\sqrt{-1}y'_i$ for $1 \leq i \leq n-k$.  In these coordinates the symplectic form is given by 
\[
\omega_{std} = \frac{\sqrt{-1}}{2} \sum_{i=1}^k dz_i\wedge d\bar{z}_{i} + \frac{\sqrt{-1}}{2}\sum_{j=1}^{n-k}dz'_j \wedge d\bar{z'}_j
\]
\item $B_{\rho}(p)$ will denote the ball of radius $\rho$ centered at $p$ in $\mathbb{C}^n$.  We will also write $B_{\rho}$ for the ball centered at $0 \in \mathbb{C}^n$.
\item $\mathfrak{C}$ will denote a special Lagrangian cylinder of the form 
\[
\{y_1=y_2=\cdots=y_k=0\} \times \bfC\simeq \mathbb{R}^k\times \bfC \subset \mathbb{C}^k\times \mathbb{C}^{n-k}
\]
 where $\bfC\subset \mathbb{C}^{n-k}$ is a special Lagrangian cone with smooth, connected link.  We will denote by $\Sigma$ the link of $\bfC$, and occasionally write 
 \[
 \bfC= {\rm Cone}(\Sigma)
 \]
 to emphasize that $\bfC$ is the cone over $\Sigma$.   We will always assume that $n-k >2$.  
 
 \item  Let $\Omega_{n-k}=dz_i'\wedge \cdots \wedge dz_{n-k}'$ be the holomorphic volume form on $\bC^{n-k}$ normalized so that ${\rm Im}(\Omega_{n-k})\big|_{\bfC}=0$ and let $\Omega_k = dz_1\wedge \ldots \wedge dz_k$ be the holomorphic volume form on $\bC^k$. Identify $\bR^k =\{y_j=0: 1 \leq j \leq k\}$ denote the real $k$-plane.  Then it is easy to check that $\mathfrak{C}$ is special Lagrangian in $\mathbb{C}^n$ for the volume form $\Omega:=  \Omega_k\wedge \Omega_{n-k}$.  In our normalization
\[
\frac{\omega^{n}}{n!} = (-1)^{\frac{n(n-1)}{2}} (\frac{\sqrt{-1}}{2})^{n} \Omega \wedge \overline{\Omega}.
\]

\item The Liouville form will be denote by
\[
\lambda = \sum_{i=1}^{k} x_idy_i -y_idx_i + \sum_{j=1}^{n-k} x'_jdy'_j-y'_jdx'_j
\]
where we note that $d\lambda = 2\omega$. 

\item We denote by $r = |z'|$ the radial function in $\mathbb{C}^{n-k}$, and by $R = \sqrt{|z|^2+|z'|^2}$ the radial function in $\mathbb{C}^n$.  We also denote by $r\frac{\del}{\del r}, R\frac{\del}{\del R}$ the radial vector fields on $\mathbb{C}^{n-k} \subset \mathbb{C}^n$ and $\mathbb{C}^n$ respectively.  

\item All integrals, unless otherwise noted, are with respect to the $n$-dimensional Hausdorff measure, which we denote by $\mathcal{H}^n$. 

\item If $\bfC= {\rm Cone}(\Sigma)$ is a special Lagrangian cone with link $\Sigma$ a connected special Legendrian in $S^{2m-1}$, we will denote by $\mathcal{M}$ the moduli space of special Legendrian deformations of $\Sigma$.  Throughout the paper $\mathcal{M}$ will be a smooth manifold and we will denote by $\exp^{\mathcal{M}}$ the exponential map induced by the natural $L^2$ metric on $\mathcal{M}$ and $\mathcal{K}$ will denote a connected compact subset of $\mathcal{M}$. 

\item Throughout the paper we will use $C$ to denote a non-negative constant, which can increase from line to line, but always depends only on the stated quantities.

\end{itemize}

\section{Basic Results}\label{sec: basic}

\subsection{Harmonic functions on special Lagrangian cones}

A well-known result of McLean \cite{McLean} says that infinitesimal deformations of a smooth, compact special Lagrangian $N$ correspond to harmonic $1$-forms on $N$.  Joyce \cite{JoyceII} proved a  vast generalization of this result for special Lagrangians with isolated conical singularities.  If $\bfC\subset \mathbb{C}^m$ is a special Lagrangian cone with an isolated singularity at $0$, then an important role in the deformation theory for $\bfC$ is played by harmonic functions.  Recall that if $\bfC = \Cone(\Sigma)\subset \mathbb{C}^m$ is special Lagrangian then the link  $\Sigma$ is a special Legendrian submanifold of $S^{2m-1}$ and conversely.  We have the following result \cite{FHY, Moriyama};

\begin{lem}
The infinitesimal deformation space of a special Legendrian submanifold $\Sigma \subset S^{2m-1}$ is isomorphic to the space of functions $\phi:\Sigma\rightarrow \mathbb{R}$ satisfying $\Delta_{\Sigma}\phi = 2m \phi$.
\end{lem}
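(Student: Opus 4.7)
The plan is to parametrize infinitesimal Legendrian deformations of $\Sigma$ by scalar functions, and then translate the linearized special condition into an eigenvalue PDE via extension to the cone $\bfC = \Cone(\Sigma) \subset \mathbb{C}^m$.

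First, I would identify Legendrian deformations with $C^\infty(\Sigma)$. Let $\alpha$ denote the standard contact $1$-form on $S^{2m-1}$ with Reeb field $R$, and let $J$ be the complex structure on $\mathbb{C}^m$. Since $\Sigma$ is Legendrian of real dimension $m-1$, its normal bundle in $S^{2m-1}$ splits orthogonally as $J(T\Sigma)\oplus \mathbb{R}\cdot R$, so every normal field has the unique form $V = JX + \phi R$ with $X \in \Gamma(T\Sigma)$ and $\phi \in C^\infty(\Sigma)$. The linearized Legendrian condition $(\mathcal{L}_V\alpha)|_\Sigma = 0$, combined with $\alpha(V) = \phi$ and the identity $d\alpha|_\mathrm{contact} = 2\,\omega|_\mathrm{contact}$, forces $X$ to be a scalar multiple of $\nabla_\Sigma\phi$. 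Thus $V = V_\phi$ is determined by $\phi$ alone, and $\phi \mapsto V_\phi$ is injective since $\alpha(V_\phi) = \phi$.

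Next, I would pass to the cone. The Legendrian variation of $\Sigma$ generated by $V_\phi$ extends to a variation of $\bfC$ through Lagrangian cones by the conical normal field $\widetilde V_\phi(r,\sigma):=r\,V_\phi(\sigma)$. Using the Lagrangian identification of normal fields with $1$-forms via $V \mapsto \iota_V\omega$, a direct computation in cone coordinates should yield
\[
\iota_{\widetilde V_\phi}\omega \;=\; d\bigl(\tfrac{1}{2}r^2\phi\bigr)\qquad \text{on } \bfC\setminus\{0\},
\]
so $\widetilde V_\phi$ is Hamiltonian with potential $u_\phi := \tfrac{1}{2}r^2\phi$. By McLean's local calculation \cite{McLean}, the linearization of $\mathrm{Im}\,\Omega|_{\bfC_t}=0$ along $\widetilde V_\phi$ is equivalent to co-closedness of $\iota_{\widetilde V_\phi}\omega$, i.e.\ $\Delta_{\bfC}u_\phi = 0$.

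Finally, separating variables on the $m$-dimensional cone $\bfC$ over the $(m-1)$-dimensional link $\Sigma$, the standard formula
\[
\Delta_{\bfC}(r^\beta f(\sigma))\;=\;r^{\beta-2}\bigl(\Delta_\Sigma f \,-\, \beta(\beta+m-2)f\bigr)
\]
(with the Hodge sign convention) specialized to $\beta = 2$ gives $\Delta_{\bfC}(r^2\phi) = \Delta_\Sigma\phi - 2m\,\phi$, so $\Delta_{\bfC}u_\phi = 0$ is precisely $\Delta_\Sigma\phi = 2m\,\phi$. Combined with the first step, this completes the isomorphism. The main obstacle I anticipate is the cone computation: verifying both that $\iota_{\widetilde V_\phi}\omega = d\bigl(\tfrac{1}{2}r^2\phi\bigr)$ and that the linearization of the special condition reduces precisely to the Hodge Laplace equation on $\bfC$. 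This normalization is what pins down the eigenvalue $2m$, and it requires careful tracking of the factors of $2$ and signs coming from the conventions for $\alpha$, $J$, and $R$.
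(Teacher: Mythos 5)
Your outline is correct and matches the standard argument: the paper itself states this lemma without proof, citing \cite{FHY, Moriyama}, but its surrounding discussion (the separation-of-variables identity $\Delta_{\bfC}(r^{\alpha}\phi)=r^{\alpha-2}(\Delta_{\Sigma}\phi-\alpha(\alpha+m-2)\phi)$ and the identification of link deformations with quadratic-growth harmonic functions on the cone) is exactly the route you take, with $\alpha=2$ giving the eigenvalue $2m$. The decomposition $V=JX+\phi R$ of the normal bundle, the reduction to $\phi$ via the linearized Legendrian condition, and McLean's reduction of the linearized special condition to harmonicity of the Hamiltonian potential $\tfrac{1}{2}r^{2}\phi$ are all as in the cited references, so your proposal is essentially the paper's intended proof.
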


Eigenfunctions of the Laplacian on $\Sigma$ give rise to harmonic functions on the cone $\bfC= \Cone(\Sigma)$ by the usual separation of variables construction.  Indeed, if $v= r^{\alpha}\phi(\omega)$ is a homogeneous function of order $\alpha$, then
\begin{equation}\label{eq: sepVarHar}
\Delta_{\bfC} v = r^{\alpha-2}\left( \Delta_{\Sigma} \phi - \alpha(\alpha+m-2)\phi\right).
\end{equation}
In particular, the infinitesimal deformations of the special Legendrian link $\Sigma$ correspond exactly to quadratic growth harmonic functions on $\bfC$. In general this deformation problem is obstructed  \cite{Moriyama}.

\begin{defn}\label{defn: strongInt}
Let $\bfC = \Cone(\Sigma) \subset \mathbb{C}^m$ be a special Lagrangian cone with smooth, connected special Legendrian link $\Sigma$.  Let 
\[
\mathfrak{H}_{\alpha}(\Sigma):= \{\phi:\Sigma\rightarrow \mathbb{R}: \Delta_{\Sigma}\phi =\alpha \phi \}.
\]
We say that $\bfC$ is {\bf integrable} if
\begin{itemize}
\item[$(i)$] $\dim \mathfrak{H}_{m-1}(\Sigma) = 2m$, and
\item[$(ii)$] if $\dim \mathfrak{H}_{2m}(\Sigma)=d$, then there is an $\epsilon>0$ and a smoothly varying $d$-dimensional family of special Legendrians 
\[
\pi:\cM \rightarrow \{ x\in \mathbb{R}^d : |x|<\epsilon\}
\]
such that any fiber $\Sigma_x:= \pi^{-1}(x)$ has $\dim \mathfrak{H}_{m-1}(\Sigma_{x}) = 2m$, and \\$\dim \mathfrak{H}_{2m}(\Sigma_{x})= d$.
\end{itemize}
\end{defn}

Let us explain the relevance of this definition. A natural way to produce deformations of a special Lagrangian is through the symmetry group $SU(m)\ltimes \mathbb{C}^m$ acting on $\mathbb{C}^m$.  Let 
\[
\mu: \mathbb{C}^m \rightarrow \mathfrak{su}(m)\oplus \mathbb{C}^m
\]
be the moment map where we identify $\mathfrak{su}(m)\oplus \mathbb{C}^m \simeq \left(\mathfrak{su}(m)\oplus \mathbb{C}^m\right)^*$ using the standard inner product, which we denote simply by $\langle, \rangle$.  Then we have

\begin{lem}[Joyce \cite{JoyceII}, Lemma 3.4]
Let $v\in \mathfrak{su}(m)\oplus \mathbb{C}^m$, then $\langle \mu, v \rangle : \mathbb{C}^m \rightarrow \mathbb{R}$ is a harmonic function on any special Lagrangian submanifold in $\mathbb{C}^m$.
\end{lem}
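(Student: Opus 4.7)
The plan is to realize $H := \langle \mu, v\rangle$ as the Hamiltonian for the infinitesimal action of $v$ and then appeal to McLean's identification of special Lagrangian deformations with harmonic $1$-forms. First I would observe that $v \in \mathfrak{su}(m) \oplus \mathbb{C}^m$ generates a one-parameter subgroup $\phi_t \subset SU(m)\ltimes \mathbb{C}^m$; let $X_v$ denote the corresponding fundamental vector field on $\mathbb{C}^m$. Because $\phi_t$ acts by Kähler isometries preserving $\Omega$, the field $X_v$ preserves both $\omega$ and $\mathrm{Im}(\Omega)$, so $\phi_t$ maps any special Lagrangian submanifold to a special Lagrangian submanifold. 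The defining property of the moment map then gives
\[
\iota_{X_v}\omega \;=\; d\langle \mu, v\rangle \;=\; dH
\]
on all of $\mathbb{C}^m$.

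Next I would restrict attention to a special Lagrangian $L \subset \mathbb{C}^m$ and split $X_v|_L = X_v^{\top} + X_v^{\perp}$ into tangential and normal components. Since $L$ is Lagrangian, $\omega(X_v^{\top}, Y) = 0$ for every $Y$ tangent to $L$, hence $\iota_{X_v^{\top}}\omega|_L = 0$ and
\[
dH\big|_L \;=\; \iota_{X_v}\omega\big|_L \;=\; \iota_{X_v^{\perp}}\omega\big|_L .
\]
The key point is now to invoke McLean's theorem: on a special Lagrangian $L$, the map $\nu \mapsto \iota_\nu \omega|_L$ identifies normal vector fields with $1$-forms on $L$, and an infinitesimal normal deformation $\nu$ preserves the special Lagrangian condition if and only if $\iota_\nu\omega|_L$ is both closed and co-closed, i.e.\ harmonic. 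Since $\phi_t(L)$ is a smooth family of special Lagrangians, $X_v^{\perp}$ is exactly such an infinitesimal deformation, so $\iota_{X_v^{\perp}}\omega|_L = dH|_L$ is a harmonic $1$-form on $L$. Co-closedness yields $\Delta_L H = 0$, which is the desired conclusion.

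The main thing to be careful about is that $L$ is not assumed compact, so one should not quote McLean's global moduli-space statement but rather its underlying local linearization, which is purely pointwise in nature: writing ${\rm Im}(\Omega)|_{L_t}$ for the variation of the imaginary volume form under the normal flow by $X_v^{\perp}$, one computes $\frac{d}{dt}\big|_{t=0} {\rm Im}(\Omega)|_{L_t} = d^{*}(\iota_{X_v^\perp}\omega|_L)\,\mathrm{dvol}_L$ (up to sign), and the $\phi_t$-invariance of $\mathrm{Im}(\Omega)$ forces the right-hand side to vanish. Apart from this local verification, every other step is formal, so I expect no real obstacle; the statement is essentially the observation that moment-map Hamiltonians pull back to harmonic functions under any calibrated submanifold whose calibration is preserved by the group action.
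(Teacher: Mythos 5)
Your argument is correct and is essentially the standard proof: the paper itself gives no proof of this lemma, citing Joyce \cite{JoyceII}, and Joyce's argument is exactly the one you describe (the flow of $X_v$ preserves $\omega$ and $\Omega$, hence maps special Lagrangians to special Lagrangians, so by McLean's pointwise linearization the $1$-form $\iota_{X_v^\perp}\omega|_L=d(H|_L)$ is closed and co-closed, giving $\Delta_L H=0$). Your closing remark about avoiding the global moduli statement and using only the local identity $\frac{d}{dt}\big|_{t=0}\mathrm{Im}(\Omega)|_{L_t}=\pm\, d^*\bigl(\iota_{X_v^\perp}\omega|_L\bigr)\,\mathrm{dvol}_L$ is the right care to take for noncompact $L$.
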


In the case $\bfC = \Cone(\Sigma)$ it is straightforward to check that 
\begin{itemize}
\item The action of translation along a vector $v\in \mathbb{C}^m$ gives rise to a harmonic function of linear growth.
\item The action of rotation along a vector field $v \in \mathfrak{su}(m)$ gives rise to a harmonic function of quadratic growth.
\end{itemize}

By~\eqref{eq: sepVarHar} linear growth harmonic functions are in one-to-one correspondence with eigenfunctions on $\Sigma$ with eigenvalue $m-1$.  Since translation does not fix the cone $
\dim \mathfrak{H}_{m-1}(\Sigma) \geq 2m$.  Thus, condition $(i)$ in Definition~\ref{defn: strongInt} demands that each linear growth harmonic function on $\bfC$ is generated by a translation, and hence integrates to a deformation of $\bfC$.  The second condition $(ii)$ says that every infinitesimal special Legendrian deformation of $\Sigma$ integrates to an actual deformation, and each small deformation of $\Sigma$ is itself integrable.  Roughly speaking, $(ii)$ implies that $\Sigma$ is a smooth point in the moduli space of special Legendrian submanifolds of $S^{2m-1}\subset \mathbb{C}^m$ in a component of maximal dimension. 

In principle it seems difficult to determine when a given special Lagrangian cone is integrable in the sense of Definition~\ref{defn: strongInt}.  However, there is a {\em stronger} notion which can be checked in some examples.

\begin{defn}\label{defn: rigid}
We say that a special Lagrangian cone $\bfC \subset \mathbb{C}^m$ is {\bf rigid} if 
\begin{itemize}
\item[(i)] every harmonic function on $\bfC$ of linear growth is generated by an element of $\mathbb{C}^m$, and
\item[(ii)] every harmonic function on $\bfC$ of quadratic growth is generated by an element of $\mathfrak{su}(m)$.
\end{itemize}
\end{defn}

Clearly if $\bfC$ is rigid then it is integrable. We have the following lemma:

\begin{lem}\label{lem: rigid}
Suppose $\bfC\subset \mathbb{C}^m$, $m \geq 3$ is integrable in the sense of Definition~\ref{defn: strongInt}. Let $\mfC=\mathbb{R}^k\times \bfC$.  Then every harmonic function which is $W^{1,2}(\mfC\setminus \mfC_{sing})$ and has quadratic growth is either $(i)$ the pullback to $\mfC$ of a quadratic growth harmonic function on $\bfC$ or $(ii)$ generated by an element of $SU(m+k)$.   If $\bfC$ is rigid,  then every such harmonic function is generated by an element of $SU(k+m)$.
\end{lem}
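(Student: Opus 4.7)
The plan is to classify quadratic-growth harmonic functions on $\mfC = \bR^k\times \bfC$ via separation of variables on the link $\Sigma$ of $\bfC$, then match the $x$-dependent modes with Joyce-type moment maps of elements of $\mathfrak{su}(k+m)\oplus\bC^{k+m}$.

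First I fix an $L^2(\Sigma)$-orthonormal basis $\{\phi_j\}$ of $\Delta_\Sigma$-eigenfunctions with $\Delta_\Sigma\phi_j=\lambda_j\phi_j$ and write $\lambda_j=\nu_j(\nu_j+m-2)$ with $\nu_j\geq 0$. Expanding $u(x,r,\omega)=\sum_j a_j(x,r)\phi_j(\omega)$ in polar coordinates $z'=(r,\omega)$ on $\bfC$, each $a_j$ satisfies
\[
\Delta_x a_j + \partial_r^2 a_j + \tfrac{m-1}{r}\partial_r a_j - \tfrac{\lambda_j}{r^2}a_j = 0.
\]
The indicial roots at $r=0$ are $\nu_j$ and $-\nu_j-m+2$; since $m\geq 3$, the $W^{1,2}$ hypothesis excludes the negative root, and a Frobenius expansion yields $a_j(x,r)=\sum_{k\geq 0}r^{\nu_j+2k}P_{j,k}(x)$ with the recursion
\[
P_{j,k+1} = -\frac{\Delta_x P_{j,k}}{2(k+1)(2\nu_j+2k+m)},
\]
whose denominators are strictly positive. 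Quadratic growth then forces $P_{j,0}$ to be a polynomial in $x$ with $\nu_j+\deg P_{j,0}\leq 2$, so the series terminates.

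Next I sort by $\deg P_{j,0}$. Modes with $\deg P_{j,0}=0$ contribute $c\,r^{\nu_j}\phi_j(\omega)$, a quadratic-growth harmonic function on $\bfC$ pulled back to $\mfC$, exhausting option $(i)$. The $x$-dependent modes (with $\deg P_{j,0}\geq 1$) split into the integer cases $\nu_j\in\{0,1\}$: when $\nu_j=0$, $\phi_j$ is constant and $u_j=P_{j,0}(x) - (r^2/2m)\Delta_x P_{j,0}(x)$, whose linear-in-$x$ pieces $x_\alpha$ are moment maps of $\bC^{k+m}$-translations along the $y$-direction in $\bC^k$ and whose quadratic-in-$x$ pieces $x_\alpha x_\beta - (\delta_{\alpha\beta}/m)r^2$ are moment maps of explicit traceless block-diagonal elements $\sqrt{-1}H\in\mathfrak{su}(k+m)$, obtained by allocating a compensating multiple of the identity to the $\bC^m$-block (using $\Delta_\bfC(r^2)=2m$); when $\nu_j=1$, integrability condition $(i)$ identifies $r\phi_j(\omega)$ with a translation-generated $\tau_v(z')$ for some $v=(a,b)\in\bC^m$, and $x_\alpha\tau_v(z')$ is checked directly to be the moment map of the element of $\mathfrak{su}(k+m)$ with the single off-diagonal Hermitian entry $H_{\alpha,k+i}=b_i+\sqrt{-1}\,a_i$.

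For the rigid case, Definition~\ref{defn: rigid} forces the pullbacks of option $(i)$ to themselves be generated by $\bC^m\oplus\mathfrak{su}(m)\subset\bC^{k+m}\oplus\mathfrak{su}(k+m)$, so every quadratic-growth harmonic function on $\mfC$ is generated by an element of $SU(k+m)\ltimes\bC^{k+m}$. The main obstacle is the explicit matching in the preceding paragraph: realizing each $x$-dependent piece as a moment map requires constructing the correct Hermitian matrix $H$, combining real-symmetric diagonal/off-diagonal blocks with antisymmetric complex parts while respecting $\Tr H=0$, a direct but delicate computation from the formula $\mu_\xi(z)=-\tfrac12\sum_{j,k}S_{jk}(x_jx_k+y_jy_k)+\sum_{j<k}T_{jk}(x_jy_k - x_ky_j)$ for $H=S+\sqrt{-1}T$ and its restriction to $\mfC$ where $y_\alpha=0$ for $\alpha\leq k$.
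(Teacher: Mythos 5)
Your proposal is correct and follows essentially the same route as the paper: a mode decomposition of the harmonic function over the link (the paper invokes Simon's real analyticity of Fourier series where you carry out the Frobenius expansion explicitly), followed by using integrability to write linear-growth harmonics on $\bfC$ as translation moment maps and then identifying the cross terms $x_\alpha\cdot\tau_v$ with restrictions to $\mfC$ of $\mathfrak{su}(k+m)$ moment maps via $x_jx_i\big|_{\mfC}=(x_jx_i+y_jy_i)\big|_{\mfC}$ and $x_jy_i\big|_{\mfC}=(x_jy_i-y_jx_i)\big|_{\mfC}$. The explicit matrix bookkeeping you flag as the "main obstacle" is exactly the short verification the paper performs.
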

\begin{proof}
Fix coordinates $(z_1,\ldots,z_k, z_{k+1},\ldots, z_{k+m})$.  Write $z_j = x_j+\sqrt{-1}y_j$.  We may assume that $y_{j}\big|_{\mfC}=0$ for $1\leq j \leq k$.  By Simon's real analyticity of Fourier Series \cite[Appendix 1]{SimonUn}, any harmonic function on $\mfC=\mathbb{R}^k\times \bfC$ which is $W^{1,2}(\mfC\setminus \mfC_{sing})$ and has quadratic growth is a linear combination of harmonic functions of the following type:
\begin{itemize}
\item quadratic growth harmonic functions on $\bfC$,
\item quadratic growth harmonic functions on $\mathbb{R}^k$,
\item linear combinations of functions of the form $x_{j}\cdot \phi $ where $1 \leq j \leq k$ and $\phi$ is a linear growth harmonic function on $\bfC$.
\end{itemize}
The first two cases are clear.  For the final case, integrability implies that any linear growth harmonic function on $\bfC$ can be written as a linear combination of the harmonic functions $x_i, y_i$, for $k+1 \leq i \leq k+m$.  Thus it suffices to show that the harmonic functions $x_jx_i, x_jy_i$ for $1 \leq j \leq k < i \leq k+m$ can be obtained from the action of $SU(k+m)$.  For this, note that if $ j \leq k < i \leq k+m$ 
\[
x_jx_i\big|_{\mfC} = (x_jx_i+y_jy_i) \big|_{\mfC}, \quad x_jy_i\big|_{\mfC} = (x_jy_i-y_jx_i) \big|_{\mfC}
\]
since $y_j\big|_{\mfC}=0$ for $j \leq k$.  However, the harmonic functions on the right hand side of each of these equations is easily seen to be generated by the action of $SU(k+m)$.
\end{proof}

\begin{rk}
The terms ``integrability" and ``rigidity" have appeared in several places in the literature on special Lagrangians, often with different meanings.  We warn the reader that the notions of integrability and stability introduced in Definitions~\ref{defn: strongInt} and~\ref{defn: rigid} differ from those in \cite{Haskins, JoyceII}.
\end{rk}

Suppose that $\bfC = \Cone(\Sigma)$ is integrable in the sense of Definition~\ref{defn: strongInt}, and let $v$ be a quadratic growth harmonic function on $\bfC$.  Since $\Sigma$ is a smooth point of the moduli space $\cM$ of special Legendrians we can pick a smooth Riemannian metric on $\mathcal{M}$ (eg. the the natural $L^2$ inner product).  Then, for any compact set $\mathcal{K} \subset \cM$ there is a $\delta >0$ such that, for any $\Sigma_{\kappa}, \kappa \in \mathcal{K}$ the exponential map
\[
\exp^{\cM}_{\Sigma_\kappa}: B_{\delta}(0) \rightarrow \mathcal{M}
\]
is well-defined, where $B_\delta(0)\subset \mfH_{2m}(\Sigma_\kappa) = T_{\Sigma_\kappa}\cM$ is the $\delta$-ball.

Let us now describe an example of a rigid special Lagrangian cone (c.f. Definition~\ref{defn: rigid})) discovered by Harvey-Lawson \cite{HL}.

\begin{ex}
The Harvey-Lawson $T^{m-1}$ cone is the special Lagrangian cone in $\mathbb{C}^m$ with an isolated conical singularity described by the equations
\[
\bfC^m_{HL}:= \{(z_1,\ldots,z_m) \in \mathbb{C}^m : |z_1|=\cdots =|z_m|, \quad {\rm Arg}(i^{m+1}z_1\cdots z_m)=0\}.
\]
\end{ex}

$\bfC^{m}_{HL}$ is a cone over a flat torus and so its spectrum can be explicitly computed; see \cite{Haskins, JoyceII, Marshall}.  We have the following result.

\begin{lem}\label{lem: specHLCone}
The spectrum of the $\bfC^{m}_{HL} = \Cone(T^{m-1})$ satisfies
\begin{itemize}
\item[$(i)$] The dimension of the linear growth harmonic functions on $\bfC^{m}_{HL}$ is given by
\[
\dim \mfH_{m-1}(T^{m-1}) = 2m
\]
\item[$(ii)$] The dimension of the space of quadratic growth harmonic functions on $\bfC^{m}_{HL}$ is given by
\[
 \dim \mfH_{2m}(T^{m-1}) = m^2-m  \qquad \text{ for } m \ne 8,9
 \]
 while, for $m=8$ we have $\dim \mfH_{16}(T^{7}) = 126$ and for $m=9$ we have $\dim \mfH_{18}(T^8) = 240$.
 \end{itemize}
 \end{lem}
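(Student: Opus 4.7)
The plan is to identify the link $T^{m-1}$ of $\bfC^m_{HL}$ as a flat totally geodesic torus in $S^{2m-1}$ and reduce both parts of the lemma to explicit lattice-point counts, followed by a short Diophantine analysis that singles out the exceptional values $m = 8, 9$. Parametrize the link by $(\theta_1, \ldots, \theta_m) \mapsto m^{-1/2}(e^{i\theta_1}, \ldots, e^{i\theta_m})$ with the constraint $\sum_j \theta_j \equiv c \pmod{2\pi}$, so that the induced metric is the restriction of $m^{-1}\sum d\theta_j^2$ to $\{\sum d\theta_j = 0\}$. A direct Fourier decomposition then gives a complete orthogonal basis of eigenfunctions $\phi_k = \exp(i\sum_j k_j \theta_j)$ indexed by $k \in \mathbb{Z}^m/\mathbb{Z}(1,\ldots,1)$ (the value of $s := \sum_j k_j$ being well-defined modulo $m$), with
\[
\Delta_\Sigma \phi_k = \alpha(k)\,\phi_k, \qquad \alpha(k) = m|k|^2 - s^2.
\]

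For (i), I would solve $\alpha(k) = m - 1$. Choosing the representative in each class with $|s| \leq m/2$ forces $|k|^2 \leq m/4 + 1$, and a short enumeration of integer vectors within this radius leaves only $k = \pm e_j$ for $j = 1, \ldots, m$. The corresponding $2m$ eigenfunctions $\cos\theta_j, \sin\theta_j$ are precisely the restrictions of the ambient coordinates $\mathrm{Re}(z_j), \mathrm{Im}(z_j)$, in agreement with the general fact that translations generate a $2m$-dimensional space of linear-growth harmonic functions.

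For (ii), the equation $\alpha(k) = 2m$ becomes $m|k|^2 = s^2 + 2m$. The sector $s \equiv 0 \pmod m$ always contributes the $m(m-1)$ real eigenfunctions $\cos(\theta_i - \theta_j), \sin(\theta_i - \theta_j)$ coming from $k = e_i - e_j$, $i \ne j$. For the remaining sectors, writing $s/m = a/q$ in lowest terms with $q \geq 2$ and setting $d = m/q$, the conditions $\sum_j k_j = ad$ and $\sum_j k_j^2 = 2 + a^2 d/q$ can be rewritten as
\[
\sum_j k_j(k_j - 1) \;=\; 2 - \frac{a(q-a)\, m}{q^2}.
\]
Each summand on the left is a nonnegative even integer that vanishes exactly on $k_j \in \{0, 1\}$ and is $\geq 2$ otherwise; requiring the right-hand side to be a nonnegative even integer forces it to equal $0$, so all $k_j \in \{0, 1\}$ and the problem collapses to the Diophantine equation
\[
a(q - a)\, m \;=\; 2 q^2, \qquad \gcd(a, q) = 1, \qquad q \geq 2.
\]

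The hard part is this Diophantine analysis. Since $\gcd(a, q) = 1$ forces $a \mid 2$, one has $a \in \{1, 2\}$. Case $a = 1$ gives $m = 2q^2/(q - 1)$, integral iff $(q - 1) \mid 2$, yielding $(m, q) \in \{(8, 2), (9, 3)\}$; case $a = 2$ (with $q$ odd) gives $m = q^2/(q - 2)$ with $(q - 2) \mid 4$, yielding $(m, q) = (9, 3)$. Counting contributions, $(m, a, q) = (8, 1, 2)$ produces the $\binom{8}{4} = 70$ classes $\{k \in \{0, 1\}^8 : \sum_j k_j = 4\}$, while $(9, 1, 3)$ and $(9, 2, 3)$ together give $\binom{9}{3} + \binom{9}{6} = 168$ classes (the two values of $a$ corresponding to conjugate sectors $s = 3$ and $s = 6$, swapped by $k \mapsto -k$). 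Combining with the $s \equiv 0$ sector yields the totals $56 + 70 = 126$ for $m = 8$, $72 + 168 = 240$ for $m = 9$, and $m(m - 1) = m^2 - m$ for every other $m$, as asserted.
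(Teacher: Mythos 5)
Your proposal is correct, and the eigenvalue formula $\alpha(k)=m|k|^2-s^2$ on $\mathbb{Z}^m/\mathbb{Z}(1,\dots,1)$ is exactly the quadratic form $m\sum k_i^2-\sum_{i,j}k_ik_j$ on $\mathbb{Z}^{m-1}$ that the paper uses, just presented in the unreduced lattice. The real difference is in scope: for part $(ii)$ the paper simply cites Marshall's thesis and Joyce for the value of $\dim\mfH_{2m}(T^{m-1})$, whereas you supply a complete self-contained derivation, and your reduction of the sectors $s\not\equiv 0\pmod m$ to $\sum_j k_j(k_j-1)=2-a(q-a)m/q^2$ (hence to $a(q-a)m=2q^2$) is a clean way to see exactly why $m=8,9$ are exceptional and to recover the counts $126$ and $240$; that is a genuine improvement in self-containedness over the paper. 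For part $(i)$ both arguments are lattice-point counts, but your step ``a short enumeration of integer vectors within this radius leaves only $k=\pm e_j$'' is too glib for general $m$: the bound $|k|^2\le m/4+1$ still admits vectors with up to $\sim m/4$ entries equal to $\pm1$, and even entries of modulus $2$ once $m\ge 12$, so a genuine enumeration is not finite-uniformly-in-$m$. The gap is easily filled — e.g.\ by Cauchy--Schwarz on the nonzero entries one gets $s^2\le |k|^4$, which combined with $s^2=m(|k|^2-1)+1$ forces $|k|^2\ge m-1$ whenever $|k|^2\ge 2$, contradicting $|k|^2\le m/4+1$ for $m\ge 3$ — or by the paper's route, which projects $k$ onto $(1,\dots,1)^{\perp}$ to conclude directly that every entry lies in $\{-1,0,1\}$ with a consistent sign and then solves $p(m-p)=m-1$. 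You should insert one such line; otherwise the argument is complete and the counts $2m$, $m^2-m$, $56+70=126$ and $72+168=240$ all check out.
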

 
\begin{proof}
The formula for $\dim \mfH_{2m}$ can be found in \cite[Table 6.1]{Marshall} for $m \leq 13$ and for general $m$ \cite[Section 3.2]{JoyceII}.  We were not able to find a reference for the result for $\dim \mfH_{m-1}$ in general (though the result for $m\leq 13$ is contained in \cite[Table 6.1]{Marshall}).  We only sketch the proof.  By the calculations in \cite{Haskins, JoyceII, Marshall} the eigenfunctions with eigenvalue $m-1$ are in one-to-one correspondence with points $(k_1,\ldots, k_{m-1}) \in \mathbb{Z}^{m-1}$ satisfying
\begin{equation}\label{eq: HLconeEV}
m\sum_{i=1}^{m-1}k_i^2 - \sum_{i,j=1}^{m-1} k_i k_j = m-1.
\end{equation}
The quadratic form $q(x) = m\sum_{i=1}^{m-1}x_i^2 - \sum_{i,j=1}^{m-1} x_ix_j$ on $\mathbb{R}^{m-1}$ has eigenvalue $1$ with multiplicity $1$, and eigenvector $(1,1,\cdots, 1)$, and eigenvalue $m$ with multiplicity $m-2$, with eigenvectors in the orthogonal complement of $(1,1,\ldots,1)$.  Let $e_i$ denote the standard basis of $\mathbb{R}^{m-1}$.  Then $\pm e_i, \pm (1,1, \ldots, 1)$ give $2m$ solutions of~\eqref{eq: HLconeEV}.  Thus it suffices to show these are the only solutions.  Given any $(k_1,\ldots, k_{m-1})$ we can write
\[
(k_1,\ldots,k_{m-1}) = \lambda (1,1,\ldots, 1) + v^{\perp}
\]
where $\lambda = \frac{1}{m-1} \sum_{i=1}^{m-1} k_i$ and $v^{\perp}$ is orthogonal to $(1,1,\ldots, 1)$.  From the eigenvalues of the quadratic form we deduce that
\[
m\sum_{i=1}^{m-1}k_i^2 - \sum_{i,j=1}^{m-1} k_i k_j=(m-1) = \lambda^2(m-1) + m |v^{\perp}|^2.
\]
Thus we see that $|\lambda| \leq 1$ and $|v^{\perp}|^2 <1$.  Furthermore, since $v^{\perp} = (k_1-\lambda, \ldots, k_{m-1}-\lambda)$ we deduce that each $k_i \in \{-1, 0, 1\}$, and each $k_i$ must be either $0$ or have the same sign as $\lambda$.  It therefore suffices to consider vectors of the form
\[
\mathbf{k} = (k_1,\ldots,k_{m-1})= \pm(1_p, 0_{m-1-p}) \quad \text{ where } \quad a_p = \overbrace{(a,\ldots, a)}^\text{$p$-times}.
\]
Without loss of generality we consider the $+$ case.  For such a vector we have $q(\mathbf{k}) = mp-p^2 = p(m-p)$. The only solutions to $p(m-p)= m-1$ for $1 \leq p \leq m-1$ are given by $p=1,m-1$.  After accounting for the obvious symmetries, the corresponding solutions are $e_i$ for  $1\leq i \leq m-1$, and $(1,\ldots, 1)$.  Including the case of $-(1_p, 0_{m-1-p})$ yields the desired conclusion.
\end{proof}

Finally, we have
\begin{cor}
For $m\ne 8, 9$, the cone $\bfC^{m}_{HL}$ is rigid in the sense of Definition~\ref{defn: rigid}.
\end{cor}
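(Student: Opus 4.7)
The plan is to verify both conditions of Definition~\ref{defn: rigid} for $\bfC^m_{HL}$ by dimension counting, using the eigenvalue dimensions from Lemma~\ref{lem: specHLCone}. For condition (i), translations give a map $\mathbb{C}^m \to \mfH_{m-1}(T^{m-1})$ sending $v$ to the restriction of the $\mathbb{R}$-linear moment map component $\langle\mu,v\rangle$ to $\bfC^m_{HL}$. Since $\bfC^m_{HL}$ spans $\mathbb{C}^m$ over $\mathbb{R}$, this map is injective, so the image has real dimension $2m$, which equals $\dim \mfH_{m-1}(T^{m-1})$ by Lemma~\ref{lem: specHLCone}(i). This verifies (i).

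For condition (ii), the rotation action gives a map $\rho:\mathfrak{su}(m)\to\mfH_{2m}(T^{m-1})$, $X\mapsto \mu_X|_{\bfC^m_{HL}}$. Since $\bfC^m_{HL}$ is a Lagrangian through $0$ and $\mu_X(0)=0$, the condition $\mu_X|_{\bfC^m_{HL}}\equiv 0$ is equivalent to the Hamiltonian vector field $z\mapsto Xz$ being tangent to $\bfC^m_{HL}$; hence $\ker\rho$ equals the Lie algebra $\mathfrak{stab}$ of the stabilizer of $\bfC^m_{HL}$ in $SU(m)$. The Cartan subalgebra $\mathfrak{t}^{m-1}=\{\mathrm{diag}(i\alpha_1,\ldots,i\alpha_m):\sum_j\alpha_j=0\}$ lies in $\mathfrak{stab}$, since its torus preserves each $|z_j|$ and multiplies $z_1\cdots z_m$ by $e^{i\sum_j\alpha_j}=1$.

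The main step, and the principal obstacle, is to show $\mathfrak{stab}=\mathfrak{t}^{m-1}$. A connected Lie subgroup of $SU(m)$ strictly containing the maximal torus $T^{m-1}$ must contain a root $SU(2)$ acting on some pair $(z_j,z_k)$. A direct computation shows that $\begin{pmatrix}a & -\bar b\\ b & \bar a\end{pmatrix}\in SU(2)$ preserves $|z_j|=|z_k|$ for all such pairs only when $ab=0$, so the connected component of the stabilizer in this $SU(2)$ is just its diagonal circle. Hence no full root $SU(2)$ lies in $\mathfrak{stab}$, and $\mathfrak{stab}=\mathfrak{t}^{m-1}$ has dimension $m-1$. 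Therefore $\dim\mathrm{image}(\rho)=(m^2-1)-(m-1)=m^2-m$, which equals $\dim\mfH_{2m}(T^{m-1})$ by Lemma~\ref{lem: specHLCone}(ii) precisely when $m\neq 8,9$. For such $m$, $\rho$ is surjective, establishing (ii) and hence rigidity; the nontrivial input is the identification of $\mathfrak{stab}$, combining the structure theorem for connected subgroups of $SU(m)$ containing a maximal torus with the moduli-preservation computation on each root $SU(2)$.
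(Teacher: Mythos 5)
Your proposal is correct and follows essentially the same route as the paper: both verify (i) and (ii) by matching the dimensions computed in Lemma~\ref{lem: specHLCone} against the images of $\mathbb{C}^m$ and $\mathfrak{su}(m)$, with the key point being that the stabilizer of $\bfC^m_{HL}$ in $SU(m)$ has Lie algebra the Cartan $\mathfrak{t}^{m-1}$, giving $\dim SU(m)-(m-1)=m^2-m$. The only difference is that the paper simply asserts the stabilizer is $U(1)^{m-1}$, whereas you supply a proof via root $SU(2)$'s; that added detail is sound.
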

\begin{proof}
By Lemma~\ref{lem: specHLCone} we have already verified property $(i)$ of Definition~\ref{defn: rigid}.  To verify property $(ii)$ it suffices to observe that the subgroup of $SU(m)$ preserving $\bfC^m_{HL}$ is $U(1)^{m-1}$, and hence the dimension of the space of quadratic growth harmonic functions on $\bfC^{m}_{HL}$ induced by $SU(m)$ is given by
\[
\dim SU(m) -(m-1) = m^2-1-(m-1) = m^2-m
\]
and hence by Lemma~\ref{lem: specHLCone}, if $m\ne 8,9$, $\bfC^{m}_{HL}$ is rigid.
\end{proof}

We end by remarking that it seems to be unknown whether the Harvey-Lawson cones $\bfC^{8}_{HL}, \bfC^{9}_{HL}$ are integrable in the sense on Definition~\ref{defn: strongInt}.  When $m=8$ there is a $126-8\cdot 7 = 70$ dimensional space of excess quadratic growth harmonic functions, while when $m=9$ there is a $240-9\cdot 8 = 168$ dimensional space.  Due to the rather large number of excess quadratic growth harmonic functions it would seem surprising if every infinitesimal deformation turned out to be integrable.

\subsection{Harmonic functions on exact special Lagrangians}

Suppose that $N$ is a closed integral current of multiplicity $1$ which is special Lagrangian in $\mathbb{C}^n$.  Since $N$ is area minimizing \cite{HL}, Almgren's big regularity theorem \cite{Almgren, DLSV} implies $N$ is smooth outside a set of Hausdorff dimension at most $n-2$.   We will denote by $N_{reg}$ the smooth part of ${\rm supp}(N)$.  By \cite[Lemma 33.2]{SimonBook} the varifold underlying $N$ is stationary.

\begin{defn}\label{defn: exact}
An {\bf exact special Lagrangian} is a multiplicity $1$, closed integral current $N$ which is special Lagrangian and such that $\frac{1}{2}\lambda|_{N_{reg}} = d\beta|_{N_{reg}}$ for some some function $\beta: N_{reg} \rightarrow \bR$. If ${\rm supp}(N)$ is connected, the function $\beta$ is unique up to addition of a constant. 
\end{defn}

\begin{rk}\label{rk: betaNorm}
In Section~\ref{sec: Decay} we will fix scale dependent normalizations for $\beta$.  For this reason, it is convenient to state the results of the first sections of this paper without reference to a particular choice of normalization.
\end{rk}

Recall that if $T$ is a $k$-varifold in $\mathbb{R}^n$, then a function $u$ on $\mathbb{R}^n$ is said to be weakly harmonic (resp. subharmonic) on $T$ if, for any smooth function $\eta$ with compact support we have
\[
\int_{\mathbb{R}^n\times G(k,n)} \langle \nabla^{\omega}\eta, \nabla^{\omega}u \rangle dT(x,\omega) =0 \qquad (\text{ resp. } \leq 0).
\]
It is well-known that, since the varifold underlying $N$ is stationary, the coordinate functions $x_i, y_i$ define weakly harmonic functions on $N$ in the sense of varifolds (see, e.g. \cite[Chapter 3]{CMbook}).  We will also need the following result

\begin{lem}
If $N$ is an exact special Lagrangian current then $\Delta_N \beta =0$ in the weak sense, and strongly on $N_{reg}$.
\end{lem}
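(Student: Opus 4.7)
The plan is to reduce $\Delta_N\beta=0$ to the fact that the ambient real coordinates $x_i,y_i$ are harmonic on $N$—pointwise on $N_{reg}$ because minimal submanifolds kill linear ambient functions, and weakly on $N$ as a whole because the underlying varifold is stationary. From $d\beta=\tfrac{1}{2}\lambda|_{N_{reg}}$ one obtains the key identity
\[
\nabla^N\beta \;=\; \tfrac{1}{2}\sum_{i=1}^{n}\bigl(x_i\,\nabla^N y_i - y_i\,\nabla^N x_i\bigr)\qquad\text{on }N_{reg},
\]
and everything below is just substitution into this formula followed by a cancellation argument.

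For the strong statement, I would write $\Delta_N\beta=-d^*(d\beta)=-\tfrac{1}{2}d^*(\lambda|_{N_{reg}})$, expand $\lambda=\sum_i(x_i\,dy_i-y_i\,dx_i)$, and apply the standard identity $d^*(f\alpha)=fd^*\alpha-\iota_{\nabla f}\alpha$. The ``cross'' terms $\langle\nabla x_i,\nabla y_i\rangle_N-\langle\nabla y_i,\nabla x_i\rangle_N$ cancel by symmetry, leaving
\[
\Delta_N\beta \;=\; \tfrac{1}{2}\sum_i\bigl(x_i\,\Delta_N y_i - y_i\,\Delta_N x_i\bigr),
\]
which vanishes pointwise on $N_{reg}$ because restrictions of ambient linear functions to a minimal submanifold satisfy $\Delta_N x_i=\Delta_N y_i=0$.

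For the weak statement, fix a smooth, compactly supported $\eta$ on $\mathbb{C}^n$. I would substitute the formula for $\nabla^N\beta$ into $\int_{N_{reg}}\langle\nabla^N\eta,\nabla^N\beta\rangle$ and use the algebraic identity
\[
x_i\langle\nabla^N\eta,\nabla^N y_i\rangle \;=\; \langle\nabla^N(x_i\eta),\nabla^N y_i\rangle-\eta\,\langle\nabla^N x_i,\nabla^N y_i\rangle,
\]
together with its symmetric counterpart. The two $\eta\langle\nabla^N x_i,\nabla^N y_i\rangle$ terms cancel, reducing the integral to
\[
\tfrac{1}{2}\sum_i\Bigl(\int_N\langle\nabla^N(x_i\eta),\nabla^N y_i\rangle -\int_N\langle\nabla^N(y_i\eta),\nabla^N x_i\rangle\Bigr).
\]
Since $x_i\eta$ and $y_i\eta$ are smooth with compact support on $\mathbb{C}^n$, each integral vanishes by weak harmonicity of the coordinate functions $x_i,y_i$ on the stationary varifold underlying $N$ (as recalled from \cite{CMbook} in the excerpt).

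The only subtle point is that $\beta$ is defined only on $N_{reg}$, while $\eta$ lives on $\mathbb{C}^n$. However $|\nabla^N\beta|\le\tfrac{1}{2}|\lambda|$ is locally bounded, so $\nabla^N\beta\in L^\infty_{loc}$ on $N$ and the integration against a compactly supported test function is unambiguous; the singular set is $\mathcal{H}^n$-null and so contributes nothing. Crucially, the manipulation above is a purely algebraic rearrangement of the integrand—no integration by parts on $N_{reg}$ is performed—so there is no boundary term at $N_{sing}$ to control. The main (mild) obstacle is thus verifying that this reduction to weak harmonicity of the ambient coordinates is legitimate across the singular set, but this is automatic once one notes that weak harmonicity of $x_i,y_i$ already encodes stationarity through the whole support of $N$.
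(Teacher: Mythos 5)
Your proof is correct and is essentially the paper's argument in coordinates: the paper observes $\nabla^N\beta=(J\ux)^T$ for the globally defined ambient field $J\ux=\sum_i(x_i\partial_{y_i}-y_i\partial_{x_i})$ and applies stationarity to $\eta\, J\ux$, and your reduction to weak harmonicity of $x_i,y_i$ tested against $x_i\eta,\,y_i\eta$ is exactly that first-variation identity written out termwise, with your cancelling cross terms playing the role of $\mathrm{div}_N(J\ux)=0$. Your point that the rearrangement is purely algebraic, so no boundary term at $N_{sing}$ arises, is the same reason the paper's one-line proof is legitimate.
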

\begin{proof}
The key point is that, on $N^{reg}$ we have $\nabla^{N}\beta = (J\ux)^{T}$, where $\ux$ denotes the position vector and $(J \ux )^{T}$ denotes the projection to the tangent space of $N$. Fixing a smooth, compactly supported function $\eta$ and applying stationarity to the vector field $\eta J\ux$ yields the result.  Alternatively, one can argue directly as in Lemma~\ref{lem: subharmonicFunctions} below.%
\end{proof}

\begin{lem}\label{lem: subharmonicFunctions}
Let $N$ be an exact special Lagrangian current.  Then the functions $x_i^2, y_i^2, \beta^2$ are weakly subharmonic on $N$, in the sense of varifolds.
\end{lem}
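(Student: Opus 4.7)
The proof rests on the classical pointwise identity $\Delta u^2 = 2u\Delta u + 2|\nabla u|^2$, which says that the square of a harmonic function is subharmonic with Laplacian equal to $2|\nabla u|^2$. At the varifold level, this passes to the weak statement by testing weak harmonicity of $u$ against the product $\eta u$, where $\eta \geq 0$ is the auxiliary compactly supported cutoff.

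\textbf{Coordinate functions.} For $u \in \{x_i, y_i\}$ the function is smooth on all of $\mathbb{C}^n$ and weakly harmonic on the stationary varifold underlying $N$, so for any nonnegative $\eta \in C^\infty_c(\mathbb{C}^n)$ the product $\eta u$ is again in $C^\infty_c(\mathbb{C}^n)$ and is therefore a legal test function. Applying weak harmonicity of $u$ to $\eta u$ and expanding with the product rule $\nabla^N(\eta u) = u\,\nabla^N\eta + \eta\,\nabla^N u$ yields
\begin{equation*}
0 \;=\; \int_N \eta\,|\nabla^N u|^2\, d\mathcal{H}^n \;+\; \int_N u\,\langle \nabla^N\eta, \nabla^N u\rangle\, d\mathcal{H}^n.
\end{equation*}
Combining with $\nabla^N u^2 = 2u\,\nabla^N u$ rearranges this to $\int_N \langle \nabla^N \eta, \nabla^N u^2\rangle\, d\mathcal{H}^n = -2\int_N \eta\,|\nabla^N u|^2\, d\mathcal{H}^n \leq 0$, which is the desired weak subharmonicity.

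\textbf{The potential $\beta$.} Since $\beta$ is only defined on $N_{reg}$, extra care is required. By the preceding lemma $\beta$ is smooth and strongly harmonic on $N_{reg}$, so $\Delta_{N_{reg}}\beta^2 = 2|\nabla^{N_{reg}}\beta|^2 \geq 0$ classically. To promote this to a weak statement against $\eta \in C^\infty_c(\mathbb{C}^n)$ with $\eta\geq 0$, I would introduce a capacity cutoff $\chi_\epsilon$ for $\mathrm{sing}(N)$: Almgren's big regularity theorem gives $\dim \mathrm{sing}(N)\leq n-2$, which is enough to produce $0\leq\chi_\epsilon\leq 1$ vanishing in a neighborhood of $\mathrm{sing}(N)$, converging pointwise to $1$ on $N_{reg}$, and with $\int_{N\cap K}|\nabla^N\chi_\epsilon|\,d\mathcal{H}^n \to 0$ on every compact $K$. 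Classical integration by parts on $N_{reg}$ applied to $\eta\chi_\epsilon$ gives
\begin{equation*}
\int_{N_{reg}} \chi_\epsilon\,\langle\nabla^N\eta,\nabla^N\beta^2\rangle\,d\mathcal{H}^n \;+\; \int_{N_{reg}} \eta\,\langle\nabla^N\chi_\epsilon,\nabla^N\beta^2\rangle\,d\mathcal{H}^n \;=\; -2\int_{N_{reg}} \eta\chi_\epsilon|\nabla^N\beta|^2\,d\mathcal{H}^n.
\end{equation*}
The bound $|\nabla^N\beta| = \tfrac{1}{2}|\lambda^T|\leq C$ on $\mathrm{supp}\,\eta$, together with local boundedness of $\beta$ on $N_{reg}$, controls the middle integral by $C\int|\nabla^N\chi_\epsilon|\to 0$, and letting $\epsilon\to 0$ delivers the weak subharmonicity. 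The one delicate point is local boundedness of $\beta$ itself near $\mathrm{sing}(N)$, since intrinsic distances on $N_{reg}$ may a priori blow up near the singular set so the ambient gradient bound does not immediately yield an $L^\infty$ bound; I would sidestep this by a truncation argument, replacing $\beta$ with $\min(\max(\beta,-M),M)$, proving weak subharmonicity of the truncation, and then letting $M\to\infty$ via monotone convergence.
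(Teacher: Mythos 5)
Your argument for the coordinate functions and your capacity-cutoff argument for $\beta^2$ are both correct and are essentially the paper's proof: the paper likewise covers $N_{sing}\cap B_{R/2}$ by balls with $\sum_i r_i^{n-2+\alpha}<\epsilon$, takes $\eta=\prod_i\eta_i$ with $|\nabla\eta_i|\leq 2r_i^{-1}$, and controls the error term $\int_N\phi\langle\nabla^N\eta,\nabla^N\beta^2\rangle$ by $\sum_i r_i^{-1}\mathcal{H}^n(N\cap B_{2r_i}(p_i))\leq C\sum_i r_i^{n-1}\to 0$ via volume monotonicity, which is exactly your requirement $\int_{N\cap K}|\nabla^N\chi_\epsilon|\to 0$. (For $x_i^2,y_i^2$ the paper applies stationarity to the ambient vector field $y_i\frac{\partial}{\partial y_i}$ rather than testing weak harmonicity against $\eta u$; these are the same computation.)

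The one step that would fail is your fallback for the local boundedness of $\beta$. With $\beta_M=\min(\max(\beta,-M),M)$ one has $\beta_M^2=\min(\beta^2,M^2)$, and $t\mapsto\min(t^2,M^2)$ is not convex, so $\beta_M^2$ is \emph{not} subharmonic (a minimum of subharmonic functions need not be subharmonic): ``weak subharmonicity of the truncation'' is false in general. Moreover, the weak inequality $\int_N\langle\nabla^N\eta,\nabla^N(\beta^2)\rangle\leq 0$ involves the gradients, so letting $M\to\infty$ requires dominated convergence of $\nabla^N(\beta_M^2)$ rather than monotone convergence of the functions, which again presupposes $\beta\in L^1_{loc}(N)$. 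If you want to avoid asserting boundedness outright, the correct device is to compose $\beta$ with a smooth \emph{convex} $\Phi_M$ equal to $t^2$ for $|t|\leq M$ and affine for $|t|$ large; then $\Phi_M(\beta)$ is genuinely subharmonic with uniformly bounded gradient. The paper itself sidesteps the issue by simply asserting that $\beta$ and $|\nabla\beta|$ are uniformly bounded on $N_{reg}\cap B_R$, which follows from $|\nabla^N\beta|=\tfrac12|\lambda^T|\leq CR$ once one knows that points of $N_{reg}\cap B_{R/2}$ can be joined by paths in $N_{reg}$ of bounded length. So your instinct that boundedness is the delicate point is sound, but the proposed repair does not work as written.
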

\begin{proof}
Note that this claim is obvious when $N$ is smooth, since $x_i, y_i, \beta$ are harmonic.  Furthermore, the claim regarding the $y_i^2$ can be easily obtained by applying the stationarity of $N$ to the (globally defined) vector field $y_i \frac{\del}{\del y_i}$ (and similarly for $x_i^2$).  Thus we will only prove the statement for $\beta^2$.  The main difficulty is that $\beta$ is only defined on $N$, and hence $\beta^2$ does not obviously have a well-defined gradient vector field in a neighborhood of $N$.  Since the problem is local, we may use the fact that, for any $R >0$, the functions  $\beta, |\nabla \beta|$ are uniformly bounded in $B_{R}\cap N_{reg}$, and satisfy $\Delta_{N} \beta^2 \geq 0$.  Clearly it suffices to prove the result near $N_{sing}:= N\setminus N_{reg}$.  Fix any $\epsilon >0$ and let $\phi \geq 0$ be a smooth function with compact support in $B_{R/2}$, supported near $N_{sing}\cap B_{R/2}$. Fix $\alpha>0$ to be determined and fix $0<\epsilon \ll \delta \ll1$.  Since $N_{sing}$ has Hausdorff dimension at most $n-2$ we can cover $N_{sing}\cap B_{R/2}$ by countably many balls $B_{r_i}(p_i)$ (with points $p_i \in B_{R/2}$) such that
\[
\sum_{i}r_i^{n-2+\alpha} <2^{-\alpha}\epsilon.
\]
Let $\eta_i:\mathbb{C}^n \rightarrow [0,1]$ be a smooth function such that
\[
\eta_i = \begin{cases} 0 &\text{ in } B_{r_i}(p_i)\\
1 & \text{ in }B_{2r_i}^{c}(p_i)
\end{cases}
\]
and such that $|\nabla \eta_i| \leq 2r_i^{-1}$.  Define $\eta = \prod_i \eta_i$ and note that this product is well-defined and smooth.  Then we have
\[
-\int_{N}  \eta \langle \nabla^{N}\phi , \nabla^{N}\beta^2 \rangle = \int_{N} \eta\phi \Delta_{N}\beta^2  + \int_{N}\phi\langle \nabla^{N}\eta,\nabla^{N}\beta^2 \rangle
\]
Now since $|\beta| + |\nabla^N\beta|\leq C$ on $N\cap B_{R}$ we have
\[
\big|\int_{N}\phi\langle \nabla^{N}\eta,\nabla^{N}\beta^2\rangle \big| \leq C\sum_{i} r_{i}^{-1} \cH^n(N\cap B_{2r_i}(p_i))
\]
Now since $p_i \in B_{R/2}$ and $\epsilon \ll \delta$ we can arrange that $2r_i\ll R$ and so $B_{2r_i}(p_i) \subset B_{R}$.  Thus, by volume monotonicity we have
\[
\cH^n(N\cap B_{r_i}(p_i))\leq C\cH^n(N\cap B_{R})r_i^{n}
\]
 for a constant $C$ independent of $\epsilon$.  Since $\Delta_{N}\beta^2 \geq 0$ on $N_{reg}$ we obtain
\[
-\int_{N}  \eta \langle \nabla^{N}\phi , \nabla^{N}\beta^2\rangle  \geq  -C\sum_{r}r_i^{n-1} \geq -C\epsilon
\]
provided we take $\alpha <1$.  Taking $\epsilon \rightarrow 0$ yields the result.
\end{proof}

\subsection{Darboux coordinate systems and local potentials}\label{sec: Darboux}

The following discussion is standard; see, for example, \cite{JoyceV}. It follows from Weinstein's tubular neighborhood theorem that if $N, N' \subset (X, \omega)$ are $C^1$ close Lagrangian submanifolds then $N'$ can be identified with the graph of a closed $1$-form $\alpha:N\rightarrow T^*N$.  Since closed $1$-forms are locally exact we can view $C^1$ close Lagrangians as locally corresponding to smooth functions on $N$.  When $N, N'$ are special Lagrangian, we have the following consequence of \cite[Proposition 2.13]{JoyceV}

\begin{lem}\label{lem: JoyceLemma}
If $N$ is special Lagrangian, and $N'$ is a $C^1$-close special Lagrangian, then locally $N'= {\rm graph}(df)$ where $f$ is a locally defined function on $N$ satisfying
\[
\Delta f = Q(x,df, \nabla^2f)
\]
where $Q(x,y,z) = O(|y|^2+|z|^2)$ for small $y,z$.
\end{lem}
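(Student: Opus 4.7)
The plan is to combine Weinstein's Lagrangian neighborhood theorem with a Taylor expansion of the special Lagrangian equation for graphs in $T^*N$. First I would invoke Weinstein's theorem to produce a symplectomorphism $\Phi:U\to V$ between an open neighborhood $U$ of the zero section in $T^*N$ (with its canonical symplectic form $\omega_{can}$) and an open neighborhood $V$ of $N$ in $\bC^n$ (with $\omega_{std}$), sending the zero section to $N$. Since $N'$ is $C^1$-close to $N$, the pullback $\Phi^{-1}(N')$ is a Lagrangian submanifold of $T^*N$ that is $C^1$-close to the zero section; a standard application of the implicit function theorem then writes it as the graph of a closed 1-form $\alpha$ on $N$, and the Poincar\'e lemma lets us write $\alpha=df$ on any contractible subset, which is what ``locally'' means in the statement.

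Next I would translate the calibration equation $\mathrm{Im}\,\Omega|_{N'}=0$ into a PDE on $f$. Evaluating $\Phi^*\Omega$ on the tangent space to the graph $\{(x,df_x)\}\subset T^*N$ yields a smooth expression depending on $x$, $df|_x$, and the second derivatives $\nabla^2 f|_x$, but not on $f(x)$ itself (the graph is invariant under constant shifts of $f$). Setting its imaginary part to zero produces an equation
\[
F(x,df,\nabla^2 f)=0,
\]
with $F(x,0,0)=0$ because $N$ itself is special Lagrangian. Taking the derivative of $F$ at $(0,0)$ in the direction of $f$ computes the first-order variation of $\mathrm{Im}\,\Omega|_N$ along the infinitesimal Lagrangian deformation generated by $df$; by McLean's computation this equals (up to sign) the Laplace-Beltrami operator $\Delta_N f$ on $N$ with the induced metric. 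Applying Taylor's theorem I can therefore expand
\[
F(x,df,\nabla^2 f)=-\Delta_N f+R(x,df,\nabla^2 f),
\]
where the smooth remainder $R$ vanishes to second order at the origin, i.e.\ $R(x,y,z)=O(|y|^2+|z|^2)$ for small $|y|+|z|$. Rearranging gives the desired equation $\Delta_N f=Q(x,df,\nabla^2 f)$ with $Q:=R$.

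The main technical point — and essentially the only one requiring care — is verifying the two structural features of the expansion: (i) that $F$ has no dependence on $f$ itself, and (ii) that its linearization is precisely $\Delta_N$, with no surviving first-order terms in $df$. Point (i) is an immediate consequence of the graph construction in $T^*N$, while (ii) is the content of the standard special Lagrangian linearization and is carried out in detail in \cite[Proposition 2.13]{JoyceV}; the argument there is purely local and applies unchanged in our setting. Once both are established, the bound $Q(x,y,z)=O(|y|^2+|z|^2)$ follows immediately from Taylor's theorem applied to the smooth function $F(x,\cdot,\cdot)$, completing the proof.
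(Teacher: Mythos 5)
Your proposal is correct and follows essentially the same route as the paper, which states this lemma as a direct consequence of Weinstein's tubular neighborhood theorem (to write $N'$ locally as ${\rm graph}(df)$) together with \cite[Proposition 2.13]{JoyceV} for the structure of the resulting PDE. You have simply unpacked the standard argument behind that citation — graph representation, McLean's linearization, and Taylor expansion of the remainder — in more detail than the paper does.
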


We now recall precisely the sense in which special Lagrangians which are close in an appropriate topology to the special Lagrangian cone $\mathbf{C}$ can be described as graphs of local potential functions.  

\subsubsection{The case of a cone with smooth link}\label{sec: DarbouxSmoothLink}

Suppose $\bfC = \Cone(\Sigma) \subset \bC^m$ is a special Lagrangian cone with an isolated singularity at $0 \in \bC^m$.  The link $\Sigma:=\bfC \cap S^{2m-1}$ is a Legendrian submanifold of $S^{2m-1}$ with its standard Sasaki-Einstein structure.   Let
\[
r^2= \sum_{i=1}^{m} |z_i|^2
\]
be the standard radial function.  The symplectic form on $\bC^m$ can be written as
\[
\omega = \frac{1}{2}\ddb r^2 =-\frac{1}{4}dJdr^2 = d(r^2\eta) =2rdr\wedge \eta + r^2d\eta 
\]
where
\[
\eta= \frac{1}{2r^2}\sum_{i=1}^{m}x_idy_i-y_idx_i= \frac{1}{2r^2}\lambda
\]
is the pull-back of the standard contact form on $S^{2m-1}$.  Since $\bfC$ is a Lagrangian cone, the Euler vector field $r\frac{\del}{\del r}$ is tangent to $\bfC$.  Thus, from $\omega|_{\bfC}=0$ we conclude that
\[
\eta|_{\Sigma}=0, \qquad d\eta|_{\Sigma}=0.
\]
In particular $\Sigma$ is transverse to the fibers of the Hopf fibration.  Furthermore, $d\eta$ is precisely the pull-back to $S^{2m-1}$ along the Hopf fibration of the Fubini-Study metric $\omega_{FS}$ on $\mathbb{P}^{m-1}$.  Thus, at least locally, we can view $\Sigma$ as a lift to $S^{2m-1}$ of a Lagrangian submanifold of $\mathbb{P}^{m-1}$.  Let $\Sigma'=\Sigma/S^1$ be the projection to a Lagrangian submanifold of $(\mathbb{P}^{m-1},\omega_{FS})$.  Fix a point $x \in \Sigma$ and let $x'\in \Sigma'$ be the image of $x$ under the projection map.  Let $\{p_1,\ldots, p_{m-1}, q_1,\ldots, q_{m-1}\}$ be local Darboux coordinates for $(\mathbb{P}^{m-1}, \omega_{FS})$ centered on $x' \in \Sigma'$ and such that
\[
\omega_{FS}= \sum_{i=1}^{m-1} dp_i\wedge dq_i ,\qquad \Sigma' \overset{locally}{=} \{p_1=\cdots=p_{m-1}=0\}.
\]
Fixing a local branch of the covering $\Sigma \rightarrow \Sigma'$ and pulling back along the Hopf fibration we have $d(\eta-\sum_ip_idq_i) = 0$, while
\[
(\eta-\sum_ip_idq_i) (J\frac{\del}{\del r})=1.
\]
Thus we can find a local coordinate $p_m$ on $S^{2m-1}$ so that $\eta-\sum_ip_idq_i=dp_m$ and  $(p_1,\ldots,p_m,q_1,\ldots, q_{m-1})$ forms a local coordinate system on $S^{2m-1}$.  Furthermore, since $\eta\big|_{\Sigma}=0$ we can assume that $p_m=0$ on $\Sigma\subset S^{2m-1}$.  Define local Darboux coordinates on $\mathbb{C}^m$ by
\[
\begin{aligned}
p_i' &= r^2p_i,\quad  q_i'=q_i,\quad \text{ for } 1 \leq i \leq m-1\\
 p_m'&=-p_m, \quad q_m'=r^2.
 \end{aligned}
\]
Then we have
\begin{equation}\label{eq: contactDarboux}
r^2\eta = \sum_{i=1}^{m-1}p_i'dq_i'-q_m'dp_m' 
\end{equation}
and hence $(p_i',q_i')_{\{1\leq i \leq m\}}$ form a system of Darboux coordinates on the set of points $U'= \Cone(U)$.  Furthermore, on this set we have
\[
\bfC\cap U' = \{ p_1'=\cdots = p_m'=0\}.
\]
We will call the Darboux coordinate systems constructed in this way {\bf adapted Darboux coordinates}.

\begin{rk}\label{rk: DarbouxinFam}
It is an easy consequence of the implicit function theorem that the Darboux coordinates can be constructed in such a way that they vary smoothly with respect to smoothly varying families of special Legendrians.
\end{rk}

Fix, once and for all a finite cover of $\bfC\cap S^{2m-1}$ by contractible open sets $U_{\alpha}\subset S^{2m-1}$ such that $V_{\alpha}:= \Cone(U_\alpha)$ admits an adapted Darboux coordinate system as constructed above and such that $\frac{1}{4}U_{\alpha}$ still cover $\bfC\cap S^{2m-1}$.  Relative to this choice we make the following definition.

\begin{defn}\label{defn: graph}
We say that a Lagrangian $N$ is the {\bf graph associated to a local potential function} over an open set $U\subset \bfC$ if, on the open set $V_{\alpha} \cap  U$, $N$ is given in the Darboux coordinate system as the graph of a $1$-form $df_{\alpha}$ for a locally defined function $f_{\alpha}$. 
\end{defn}

Suppose that $N$ is a special Lagrangian in $\bC^m$ which is given locally in the Darboux coordinates $(p',q')$ as a graph $\{p_i'=\frac{\del f}{\del q_i'}\}$. Let $x' \in \bfC\cap S^{2m-1}$ be a fixed point and let $B$ denote a suitably chosen ball in $\mathbb{C}^m$ containing $x'$ so that the coordinates $(p',q')$ yield a symplectomorphism
\[
\phi: (B, \omega) \rightarrow (\phi(B), \omega_{std})
\]
onto $\phi(B)$, which is open in $\mathbb{C}^m$ and suppose that $\phi(q',0) = x'$.  Since $\bfC=\{p'=0\}$ it is straightforward to show that
\[
\frac{d}{dt}\bigg|_{t=0} \phi(q', t \frac{\del f}{\del q '}) = J{\rm grad} f.
\]
Using the additive structure on $\mathbb{C}^m$ we see that
\[
N \ni \phi(q',  \frac{\del f}{\del q '}) = \phi(q',0) + J{\rm grad} f(q') + E,
\]
where $E$ is some vector of length $|E| \leq C|df|^2$.  We remark that in our applications $|df|$ will be small. This observation easily yields

\begin{lem}\label{lem: distEstPot}
Suppose $N$ is a Lagrangian which is the graph associated to a potential function over an open set $U \subset \bfC$ and assume that in any local coordinate chart $V_{\alpha}$ the potential function satisfies
\[
r^{-1}|df_{\alpha}|+|D^2f_{\alpha}| \leq \epsilon
\]
 Given two Darboux coordinate systems $(\phi^1, V_1)$ and $(\phi^2, V_2)$ with $\phi^1(V_1)\cap\phi^2( V_2) \subset U$, let $f_i$ be the corresponding local potential functions of $N$.  Then there are constants $C, \epsilon_0(\bfC)>0$, depending only on the coordinates $\phi^i$  such that if $\epsilon < \epsilon_0$ then
\[
r^{-1}|d(f_1-f_2)| \leq C\epsilon^2
\]
\end{lem}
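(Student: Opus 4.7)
The plan is to use the generating-function formalism for the transition symplectomorphism $\Psi := \phi^2 \circ (\phi^1)^{-1}$ between the two adapted Darboux charts. The essential structural point is that both $\phi^j$ send $\bfC$ to the zero section $\{p = 0\}$, so $\Psi$ preserves the zero section; this cancels the linear-in-$df_1$ part of the discrepancy between $f_1$ and $f_2$, leaving only quadratic errors controlled by $|df_1|^2$ and $|df_1|\cdot|D^2 f_1|$.

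To execute this, I would first produce a generating function for $\Psi$. With $\lambda_{std}=\sum p_i\,dq_i$, the 1-form $\Psi^*\lambda_{std}-\lambda_{std}$ is closed (both differentials are $\omega_{std}$) and restricts to zero on $\{p^1=0\}$, since $\Psi$ preserves the zero section where $\lambda_{std}$ vanishes. Hence locally $\Psi^*\lambda_{std}-\lambda_{std}=dG$ with $G|_{\{p=0\}}=0$ and $dG|_{\{p=0\}}=0$, so $G(q,p)=O(|p|^2)$, $\partial_q G=O(|p|^2)$, and $\partial_p G=O(|p|)$. Parameterizing $N$ in each chart by $\iota_j(q)=(q,df_j(q))$ and letting $\tilde\psi:q^1\mapsto q^2$ be the change of $q$-coordinates induced by the relation $\Psi\circ\iota_1=\iota_2\circ\tilde\psi$, the identity $\iota_1^*\Psi^*\lambda_{std}=(\iota_2\circ\tilde\psi)^*\lambda_{std}$ yields
\[
d(f_2\circ\tilde\psi)\;=\;df_1 + d\bigl[G(q^1, df_1(q^1))\bigr],
\]
whence, up to a constant, $f_2\circ\tilde\psi-f_1=G(q^1,df_1(q^1))$. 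Differentiating and using the Taylor bounds on $G$ produces $|d(f_2\circ\tilde\psi-f_1)|\le C\bigl(|df_1|^2+|df_1|\,|D^2 f_1|\bigr)$. A parallel first-order comparison between $\tilde\psi$ and the base change $\psi:=\Psi|_{\{p=0\}}$ (using $\tilde\psi-\psi=O(|df_1|)$ and $d(\tilde\psi-\psi)=O(|D^2 f_1|)$) upgrades this to the same bound for $d(f_2\circ\psi-f_1)$. Inserting $|df_1|\le r\epsilon$ and $|D^2 f_1|\le\epsilon$ and dividing by $r$ then gives the claimed $r^{-1}|d(f_1-f_2)|\le C\epsilon^2$ for $\epsilon<\epsilon_0$ small enough that we remain in the domain where the Taylor bounds for $G$ apply.

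The main technical nuisance is the scaling bookkeeping imposed by the adapted Darboux coordinates — in particular the identifications $q'_m=r^2$ and $p'_i=r^2 p_i$, which build factors of $r$ into the chart nontrivially — and the conversion between the chart-coordinate norms used on $f_j$ and the intrinsic cotangent norm on $\bfC$ in the statement. Because the finite cover of the link $\Sigma$ and the adapted Darboux coordinates are fixed once and for all, all transition functions, metric comparisons, and the $C^2$-size of the generating function $G$ on the relevant domain are uniformly bounded, so the final constant $C$ depends only on $\bfC$ and the chosen charts $\phi^j$, as asserted.
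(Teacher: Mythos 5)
Your argument is correct, but it takes a genuinely different route from the paper's. The paper's (very terse) proof is extrinsic: it rests on the expansion $\phi^i(q,df_i)=\phi^i(q,0)+J\,\mathrm{grad}f_i+E_i$ with $|E_i|\leq C|df_i|^2$ derived just before the lemma, together with the fact that $J\,\mathrm{grad}f_i$ is normal to $\bfC$; since both expressions describe points of the single graph $N$ over essentially the same base point, the first-order normal displacements must agree up to the quadratic errors, giving $|d(f_1-f_2)|\leq C\epsilon^2 r^2$ and hence the claim on the bounded overlap. You instead work intrinsically with the transition symplectomorphism $\Psi=\phi^2\circ(\phi^1)^{-1}$ and its generating function $G$, where the key structural input — that both adapted charts send $\bfC$ to the zero section, so $dG$ vanishes identically along $\{p=0\}$ and $G=O(|p|^2)$, $\partial_qG=O(|p|^2)$, $\partial_pG=O(|p|)$ — plays exactly the role of the paper's observation that the normal displacement $J\,\mathrm{grad}f$ is chart-independent to first order; your Taylor bounds and the comparison of $\tilde\psi$ with $\psi$ are all sound. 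What your route buys is the exact identity $f_2\circ\tilde\psi-f_1=G(q,df_1)+c$, which yields a zeroth-order bound $|f_1-f_2-c|\leq C|df_1|^2$ in addition to the derivative bound; this is essentially the content of Lemma~\ref{lem: diffPotEst}, which the paper instead obtains by integrating $\beta$ along a short path in $N$ (and you would still need exactness to pin down $c$ there). What the paper's route buys is brevity and the byproduct, used directly in the proof of Lemma~\ref{lem: diffPotEst}, that the two parameterizations of $N$ over a common base point are $O(\epsilon^2)$-close as points of $\mathbb{C}^m$. The scaling bookkeeping you flag is real but harmless in either approach, since the adapted charts are conical ($V_\alpha=\Cone(U_\alpha)$ with $p_i'=r^2p_i$, $q_m'=r^2$), so all transition data is uniformly controlled after rescaling to $r\sim 1$.
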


Recall that the functions $f_\alpha$ are only defined up to a constant.  In order to fix this ambiguity we invoke the exactness of $N$.  Recall that in the adapted Darboux coordinates, the Liouville form is given by ~\eqref{eq: contactDarboux}. Since $N$ is exact, there is a function $\beta:N \rightarrow \mathbb{R}$ such that
\[
\begin{aligned}
d\beta = r^2\eta\big|_{N} &=  \sum_{i=1}^{m-1}\frac{\del f_{\alpha}}{\del q_i'}dq_i'-q_m'd\left(\frac{\del f_{\alpha}}{\del q_m'}\right)\\
&=d\left(f-q_m'\frac{\del f}{\del q_m'}\right).
\end{aligned}
\]
Thus, there is a constant $c$ depending on the coordinates so that 
\[
\begin{aligned}
\beta(q') &= c+f(q')-q_m'\frac{\del f}{\del q_m'} \\
&= c-\frac{1}{2}r^3\frac{\del}{\del r} \left(\frac{f}{r^2}\right)
\end{aligned}
\]
By adjusting $f$ by a constant we can assume $c=0$.

\begin{defn}\label{defn: normalized}
We say that the exact Lagrangian $N$ is a graph associated to a {\bf normalized local potential function} over $U$ if $N$ is the graph of a local potential function and the potential functions $f$ are normalized to satisfy
\[
\beta(q')=-\frac{1}{2}r^3\frac{\del}{\del r} \left(\frac{f}{r^2}\right).
\]
Throughout this paper all potential functions will be taken to be normalized in this sense.
\end{defn}

\begin{rk}\label{rk: changeOfNorm}
Note that a choice of normalization for $\beta$ fixes a choice of normalization for the local potentials $f$.  Furthermore, if we change $\beta \mapsto \beta +c$, then the local potentials are changed by $f \mapsto f+c$.
\end{rk}

\subsubsection{The case of a cylinder}

We now extend this discussion to the case of a cylinder $\mfC:=\mathbb{R}^k\times \bfC\subset \mathbb{C}^n$.  We proceed as in Section~\ref{sec: DarbouxSmoothLink} in the $\bC^{n-k}$ factor to find open sets $U_{\alpha}\subset S^{2(n-k)-1}$ covering $\Sigma = \bfC \cap S^{2(n-k)-1}$ and such that $V_{\alpha}=\Cone(U_\alpha)$ admits an adapted Darboux coordinate system and $\frac{1}{4}U^{\alpha}$ still cover $\bfC\cap  S^{2(n-k)-1}$.  For any $U_{\alpha}$ let $(p_i',q_i')_{\{1\leq i \leq n-k\}}$ denote the Darboux coordinates.  In the $\bC^k$ factor we take the standard coordinates $(x_i,y_i)$ so that $\omega_{std}= \sum_{i=1}^{k}dx_i\wedge dy_i$, and $\bR^k\subset \bC^k = \{y_1=\cdots=y_k=0\}$.  These choices yield Darboux coordinates, still denoted $\phi^{\alpha}$, defined on $V_\alpha' = \mathbb{C}^k\times V_{\alpha}$.   The standard Liouville form is given by
\[
\frac{1}{2}\lambda = r^2\eta + \frac{1}{2}\sum_{j=1}^{k}(x_jdy_j-y_jdx_j).
\]
If $N$ is an exact Lagrangian given locally as a graph over $\mfC$ by
\begin{equation}\label{eq: GraphCylinder}
N  = \left\{ p_i'=\frac{\del f}{\del q_i'} , \quad y_j= -\frac{\del f}{\del x_j} \right\}
\end{equation}
in our local Darboux coordinates, then we have
\[
\begin{aligned}
d\beta &= \sum_{i=1}^{n-1}\frac{\del f}{\del q_i'}dq_i'-q_n'd\left(\frac{\del f}{\del q_n'}\right) - \frac{1}{2}\sum_{j=1}^{k}\left(x_j d\left(\frac{\del f}{\del x_j}\right) - \frac{\del f}{\del x_j}dx_j\right)\\
&=d\left(f-q_n'\frac{\del f}{\del q_n'}-\frac{1}{2}\sum_{j=1}^k x_j\frac{\del f}{\del x_j}\right).
\end{aligned}
\]
As in section~\ref{sec: DarbouxSmoothLink} above we have
\[
q_n'\frac{\del f}{\del q_n'}= \frac{1}{2}r\frac{\del f}{\del r}
\]
where $r\frac{\del}{\del r}$ is the radial vector field on $\bC^{n-k}$.  In particular, if we set $R$ to be the radial function on $\bC^{n}$ then we see that, since $\mfC\subset\{y_j=0: 1 \leq j \leq k\}$ we have
\[
q_n'\frac{\del f}{\del q_n'}+\frac{1}{2}\sum_{j=1}^k x_j\frac{\del f}{\del x_j} = \frac{1}{2}R\frac{\del f}{\del R}.
\]
It follows that
\begin{equation}\label{eq: betaVsPot}
\begin{aligned}
\beta=c-\frac{1}{2}R^3\frac{\del}{\del R}\left(\frac{f}{R^2}\right)
\end{aligned}
\end{equation}
where $c$ is a constant.  The following lemma shows that if we choose the potentials to be normalized, then the error on the overlaps is quadratically suppressed.

\begin{lem}\label{lem: diffPotEst}
There is an $\epsilon_0$ small, depending only on $\bfC$, with the following effect.  Suppose $N$ is an exact Lagrangian which is the graph associated to a local potential function over an open set $U \subset (\mathbb{R}^k \times \bfC) \cap B_2$, in the sense of Definition~\ref{defn: graph}.  Suppose moreover that the local potential functions $f_{\alpha}$  satisfy
\[
r^{-1}|df_{\alpha}|+|D^2f_\alpha| \leq \epsilon
\]
for $\epsilon < \epsilon_0$.  Let $(\phi^{i}, V'_{i}), i=1,2$ be two sets of local Darboux coordinates as constructed above and let $f_i, i=1,2$ be the local normalized potential functions. Then, there is a constant $C$, depending only on $\bfC$, such that
\[
\sup_{\phi^{1}(V'_{1})\cap \phi^{2}(V'_{2})\cap U} r^{-2}|f_{1} -f_{2}| \leq  C\epsilon^2.
\]
\end{lem}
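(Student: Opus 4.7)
My plan is to exploit the product structure of the cylindrical Darboux charts $V'_\alpha = \bC^k \times V_\alpha$, whose transitions are trivial on the $\bC^k$-factor, to reveal a rigid radial structure for $g := f_1 - f_2$. Specifically, I will first argue that $g$ is independent of the $\bR^k$-coordinates $x_j$, and then use the normalization to conclude that $g$ is homogeneous of degree $2$ in $r = |z'|$; the desired bound will then fall out of the gradient estimate $|dg| \leq C\epsilon^2 r$ coming from (the cylindrical analog of) Lemma~\ref{lem: distEstPot}.

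The first step is to verify that $\partial g/\partial x_j \equiv 0$. In any chart $V'_\alpha$ the Lagrangian $N$ takes the form~\eqref{eq: GraphCylinder}, so the $y_j$-component of a point on $N$ equals $-\partial f_\alpha/\partial x_j$. Since $(x_j, y_j)$ are standard coordinates on $\bC^k$ and the Darboux transition between the two charts is the identity on the $\bC^k$-factor, the $y_j$-coordinate is intrinsic to $N$, forcing $\partial f_1/\partial x_j = -y_j = \partial f_2/\partial x_j$ on the overlap. Since the coordinate change $q'_1 \mapsto q'_2$ depends only on $z'$, differentiating $g$ along a curve of the form $(x + t e_j, z')$ in $\mfC$ isolates precisely this common $x_j$-derivative. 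Hence $g$ descends to a function on an open subset of $\bfC \subset \bC^{n-k}$; moreover, for each fixed $x$, $g(x,\cdot)$ is the difference of two local potentials on $\bfC$ for the Lagrangian slice of $N$ at that $x$, so Lemma~\ref{lem: distEstPot} applies directly to give $|dg| \leq C\epsilon^2 r$.

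Next I use the normalization~\eqref{eq: betaVsPot}: since both $f_1$ and $f_2$ are normalized to represent the same $\beta$ (with $c=0$), subtracting the two normalization identities yields $R^3\partial_R(g/R^2) = 0$, or equivalently $R\partial_R g = 2g$. On $\mfC$ the radial vector field decomposes as $R\partial_R = r\partial_r + x_j\partial_{x_j}$, and the $x_j$-derivatives of $g$ vanish by the previous step, so the ODE simplifies to $r\partial_r g = 2g$. Thus $g(z') = r^2 h_0(\theta)$ for some function $h_0$ on an open subset of the link $\Sigma$. Using the polar decomposition of the cone metric on $\bfC$ together with the ansatz $g = r^2 h_0$, a direct computation gives
\[
|dg|^2 \;=\; (\partial_r g)^2 + r^{-2}|\nabla_\Sigma g|^2 \;=\; r^2 \bigl(4 h_0^2 + |\nabla_\Sigma h_0|^2\bigr),
\]
so the bound $|dg| \leq C\epsilon^2 r$ immediately yields $|h_0| \leq C\epsilon^2/2$ pointwise, whence $r^{-2}|g| = |h_0| \leq C\epsilon^2$ as required.

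The only step requiring real care is the $x$-independence of $g$: this structural fact is what makes the lemma possible, and it relies on both the product form of the Darboux charts and the symplectic constraint $y_j = -\partial f/\partial x_j$. Once it is in hand, no delicate analysis near the singular axis $\bR^k \times \{0\}$ is needed, because the rigid form $g = r^2 h_0(\theta)$ converts the gradient bound directly into a pointwise bound on $h_0$, without any integration along curves or choice of base point.
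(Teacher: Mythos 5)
Your proof hinges on two exact identities for $g = f_1 - f_2$: that $\partial g/\partial x_j \equiv 0$ and that $R^3\partial_R(g/R^2) \equiv 0$, which together give the rigid form $g = r^2h_0(\theta)$. Both rest on the premise that the intrinsic quantities $y_j$ and $\beta$ take \emph{equal} values ``at corresponding points'' of the two charts. That premise is false: if $p \in \mfC$ lies in the overlap, the chart-1 graph point over $p$ and the chart-2 graph point over $p$ are in general \emph{different} points of $N$. The two charts project along different directions, and the transition symplectomorphism moves the fibre $\{q' = \mathrm{const}\}$ by an amount of order $|df_\alpha|$; consequently the two graph points over the same base point are separated by a distance of order $\epsilon^2 r$ (this is exactly the content of the cylindrical analogue of Lemma~\ref{lem: distEstPot}). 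Evaluating $y_j$ and $\beta$ at these two distinct points of $N$, one finds $|\partial g/\partial x_j| = O(\epsilon^2 r)$ and $|R\partial_R g - 2g| = 2|\beta(P_1)-\beta(P_2)| = O(\epsilon^2 rR)$ rather than zero --- errors of precisely the same order as the quantity being estimated, so they cannot be discarded. The homogeneity $g = r^2h_0(\theta)$ therefore fails, and your final step, which converts the gradient bound into a pointwise bound on $h_0$, has nothing to act on.

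The paper's proof is in effect the quantitative version of the step you tried to make exact: it fixes a point, rescales and translates so that $r = R = 1$, observes that the two graph points are joined by a path in $N$ of length $O(\epsilon^2)$, integrates $d\beta = \tfrac12\lambda$ along that path to get $|\beta(P_1) - \beta(P_2)| \leq C\epsilon^2$, and then combines this with the normalization identity and the bound $R\,|\partial_R(f_1-f_2)| \leq C\epsilon^2$ from Lemma~\ref{lem: distEstPot} to conclude $|g| \leq C\epsilon^2$ at that point. Two remarks on repairing your argument: first, the ``integration along curves'' you hoped to avoid is unavoidable --- it is the only place the smallness $O(\epsilon^2)$ of the $\beta$-discrepancy actually enters; second, the translation to the slice $x = 0$ is not cosmetic, since the raw error $|\beta(P_1)-\beta(P_2)| = O(\epsilon^2 rR)$ carries the weight $rR$ rather than $r^2$, and only after translating (using that $g$ itself is unchanged while $\beta$ shifts by the globally defined function $-\tfrac12\sum_j a_j y_j$) does one obtain the stated scale-invariant bound $r^{-2}|g| \leq C\epsilon^2$.
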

\begin{proof}
Fix a point $(z_*,w_*) \in \phi^{1}(V'_{1})\cap \phi^{2}(V'_{2})\cap U$.  By rescaling and translating we may assume that $|z_*|=1$ and $w_*=0$.  Fix points $q_i \in V'_i$ so that
\[
(z_*,w_*) = \phi_i(q_i, 0).
\]
It follows from the obvious generalization of the estimate in Lemma~\ref{lem: distEstPot} to the case of cylinders that the points $\phi^1(q_1, df_1), \phi^2(q_2, df_2) \in N$ may be joined by a path in $N$ of length $O(\epsilon^2)$.   Now, recall that $d\beta =\frac{1}{2}\lambda$ for the Liouville $1$-form $\lambda$.  Since $\lambda$ is uniformly bounded in $B_2\subset \mathbb{C}^n$ we have
\[
\begin{aligned}
|\beta(\phi^1(q_1, df_1))- \beta(\phi^2(q_2, df_2))| &\leq C  \epsilon^2.
\end{aligned}
\]
Since the potentials $f_i$ are normalized, it follows from~\eqref{eq: betaVsPot} that
\[
\big|f_1(z_*,w_*)-f_2(z_*,w_*)\big| \leq |\beta(\phi^1(q_1, df_1))- \beta(\phi^2(q_2, df_2))| + \frac{1}{2}{R}\big|\frac{\del}{\del R} (f_1-f_2)\big|
\]
Now $R=1$ by assumption, and by Lemma~\ref{lem: distEstPot} we have
\[
|\frac{\del}{\del R} (f_1-f_2)\big| \leq C\epsilon^2.
\]
Combining these estimates yields the result.
\end{proof}

\begin{rk}
Following Remark~\ref{rk: DarbouxinFam} it is not hard to see that the constants $C,\epsilon_0$ can be chosen uniform for compact sets $\mathcal{K}\subset \mathcal{M}$. 
\end{rk}

\begin{rk}\label{rk: potLocalDefined}
Throughout this paper, $f:= (f_{\alpha})$ will denote a collection of potential functions defined in the fixed Darboux coordinate charts $(\phi^\alpha, V'_\alpha)$.  Any norm of $f$ is then supremum of the norms of $f_{\alpha}$ in the associated charts; for example
\[
\|f\|_{L^{2}(U)} := \sup_{\alpha} \|f^\alpha\|_{L^{2}(U\cap V'_{\alpha})}.
\]
 For most of our argument it will not be essential that the $f_{\alpha}$ are locally defined with the exception of Proposition~\ref{prop: propOfSmall} where we will use that the Darboux coordinates are defined in $\mathbb{C}^{k}\times \Cone(U_{\alpha})$.
\end{rk}

\section{Existence of Local Potentials}\label{sec: pot}

We now establish the existence of local potentials in the sense of the previous section, together with some estimates for these potentials.  The following lemma concerning multiplicity one convergence is an immediate consequence of Allard's regularity theorem \cite{Allard, SimonBook}, together with the fact that weak convergence of area minimizing, integral currents implies varifold convergence \cite[Theorem 34.5]{SimonBook}.

\begin{lem}\label{prop: existGraph}
Suppose  $N_i$ is a sequence of special Lagrangians such that $\lim_{i\rightarrow \infty}N_i = \mathfrak{C}$ as closed integral currents. For all  $\eta, \tau \in (0,1]$, $\rho>0$ there exists $k = k(\eta, \tau)$ such that, for all $i \geq k$, the following holds:  $N_i\cap B_{2\rho}\cap\{r>2\rho\tau\}$ is the graph associated to a potential function $f$ defined on an open neighborhood of $ \mfC \cap B_{2\rho}\cap\{r>2\rho\tau\}$ satisfying the estimate 
\[
\sup_{\mfC\cap B_{2\rho}\cap\{r>2\rho\tau\}}r^{-1}|df| + |D^2f| \leq \eta.
\]
\end{lem}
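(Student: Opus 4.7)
The plan is to combine Allard's regularity theorem with an elliptic bootstrap for the special Lagrangian equation and the construction of adapted Darboux coordinates from Section~\ref{sec: Darboux}. By rescaling the ambient space via $z\mapsto \rho^{-1}z$, which preserves the class of special Lagrangians and leaves $\mfC$ invariant (since it is a cone in both the $\bR^k$ and $\bfC$ factors), I reduce to the case $\rho=1$, so the threshold index $k$ need only depend on $\eta,\tau$. Since both $N_i$ and $\mfC$ are area-minimizing integral currents, the weak current convergence $N_i\to \mfC$ upgrades to varifold convergence by \cite[Theorem 34.5]{SimonBook}. The regular part of $\mfC$ is $\bR^k\times(\bfC\setminus\{0\})$, hence the compact region $\overline{\mfC\cap B_2\cap\{r>2\tau\}}$ consists entirely of multiplicity-one smooth points of $\mfC$, lying at positive distance $\ge 2\tau$ from the singular axis $\bR^k\times\{0\}$.

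At each such point $x$, varifold convergence together with the smoothness of $\mfC$ near $x$ puts $N_i$ into the hypotheses of Allard's regularity theorem \cite{Allard, SimonBook} on a ball of some radius $r_0=r_0(\tau)>0$: the density is close to $1$ and the excess relative to $T_x\mfC$ is small for $i$ large. This yields a $C^{1,\al}$ graph representation of $N_i\cap B_{r_0}(x)$ over $\mfC$ (not merely over the tangent plane, using that $\mfC$ is $C^\infty$-close to this plane there) whose $C^{1,\al}$ norm tends to $0$ as $i\to \infty$. Covering $\mfC\cap B_2\cap\{r>2\tau\}$ by finitely many such balls gives a uniform $C^{1,\al}$-small graph representation.

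Next I would upgrade this to $C^2$ smallness using Lemma~\ref{lem: JoyceLemma}: locally over $\mfC$, the Lagrangian $N_i$ is the graph of $df$ for some potential $f$ satisfying the uniformly elliptic perturbed Laplace equation $\Delta f=Q(x,df,\nabla^2 f)$ with $Q$ vanishing quadratically at the origin. Standard Schauder estimates bootstrap the $C^{1,\al}$ smallness to $C^k$ smallness for any $k$, uniformly on the finite cover. Finally, cover $\mfC\cap B_2\cap\{r>2\tau\}$ by finitely many of the adapted Darboux charts $V'_\al$ of Section~\ref{sec: Darboux} (possible by compactness and the scale covariance of $\mfC$). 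In each chart, $N_i$ is the graph \eqref{eq: GraphCylinder} of $df_\al$, and the ambient $C^2$-smallness translates, via the scaling relations $p'_i=r^2 p_i$ and $q'_m=r^2$ defining the adapted Darboux coordinates, into the scale-invariant bound $r^{-1}|df_\al|+|D^2 f_\al|\le \eta$. Choosing $k(\eta,\tau)$ to be the maximum of the finitely many threshold indices from Allard and Schauder gives the conclusion.

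The main obstacle is simply ensuring the uniformity of all the constants (Allard's smallness threshold, Schauder constants, and the constants entering the adapted Darboux construction) over the region $\mfC\cap B_2\cap\{r>2\tau\}$. This is handled by compactness of the region away from the singular axis together with the dilation invariance of $\mfC$, which allows any point at distance $r\gtrsim \tau$ from the axis to be rescaled to a definite scale where only finitely many charts are required.
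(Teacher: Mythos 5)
Your proposal is correct and follows essentially the same route as the paper, which disposes of this lemma in one line by citing the upgrade of weak convergence of area-minimizing currents to varifold convergence \cite[Theorem 34.5]{SimonBook} together with Allard's regularity theorem. The additional steps you spell out --- the Schauder bootstrap via Lemma~\ref{lem: JoyceLemma} and the passage to the adapted Darboux charts --- are exactly the standard details the paper leaves implicit, and your handling of the uniformity of constants on the compact region $\mfC\cap B_2\cap\{r>2\tau\}$ is sound.
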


The next result is a quantitative improvement of Lemma~\ref{prop: existGraph} which says that if a special Lagrangian $N$ is sufficiently close to $\mathfrak{C}$ on an annulus then in fact $N$ is a graph over $\mfC$ on an extended annulus.

\begin{prop}\label{prop: existPot1}
Fix $\rho>0$ and $\eta, \tau, \mu \in (0, \frac{1}{10})$ and  $\gamma \in(\frac{1}{2},1)$, and let $\mathcal{K}\subset \mathcal{M}$ be a compact set in the moduli space of special Legendrians.  Let $\mfC= \mathbb{R}^k \times \Cone(\Sigma_{\kappa})$ for some $\kappa \in \cK$.  There exists a number $\eta_1 := \eta_1(\cK,\eta, \tau, \mu, \gamma) $ with the following property: if $N\subset \mathbb{C}^n$ is a special Lagrangian such that 
\begin{equation}\label{eq: volBndProp}
\mathcal{H}^n(N\cap B_{2\rho}) \leq \mathcal{H}^n(\mathfrak{C}\cap B_{2\rho}) + \omega_n(1-\gamma)^n\rho^n
\end{equation}
and $N \cap B_{2\rho} \cap \{r>2\rho\tau\}$ is the graph associated to a potential function $f$ defined in an open neighborhood of $\mfC \cap B_{2\rho}\cap \{r> 2\rho\tau\}$ and satisfying the estimates
\[
\sup_{\mfC \cap B_{2\rho}\cap \{r> 2\rho\tau\}} r^{-1}|df| + \sup|D^2f| < \eta_1
\]
then $N\cap B_{2\gamma \rho}\cap  \{r>2\gamma \rho(\mu\tau)$ is associated to a potential function $F$ defined on an open neighborhood of $\mfC \cap B_{2\gamma\rho} \{r>2\gamma\rho(\mu \tau)\}$, satisfying the estimate
\[
\sup_{\mfC\cap B_{2\gamma \rho}\cap  \{r>2\gamma \rho(\mu\tau)\}} r^{-1}|dF| + |D^2F| < \eta.
\]
\end{prop}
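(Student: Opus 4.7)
The plan is to argue by contradiction, via compactness of special Lagrangian integral currents combined with a unique continuation argument for the weak limit. Suppose no such $\eta_1$ exists; then there is a sequence $\eta_1^{(i)}\to 0$ and a corresponding sequence of special Lagrangians $N_i$ satisfying both the volume bound~\eqref{eq: volBndProp} and the $C^2$-small graph hypothesis on $B_{2\rho}\cap\{r>2\rho\tau\}$ with constant $\eta_1^{(i)}$, yet for which the desired conclusion fails.

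First I would use~\eqref{eq: volBndProp} together with monotonicity to bound the masses of the $N_i$ uniformly in $B_{2\rho}$. Standard compactness for integral currents then produces, along a subsequence, a limit $N_\infty$ to which the $N_i$ converge both as integral currents and (by area-minimality) as stationary varifolds; the special Lagrangian condition passes to the limit. On the outer annulus $B_{2\rho}\cap \{r>2\rho\tau\}$, the potentials $f_i$ tend to zero in $C^2$, so $N_i \to \mfC$ smoothly there and $N_\infty = \mfC$ on this open set with multiplicity one.

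The main step is to extend the identity $N_\infty = \mfC$ to the full smooth part $\mfC\setminus(\bR^k\times\{0\})$ of $\mfC$ in $B_{2\rho}$. Since the link $\Sigma$ is assumed connected, $\bfC\setminus\{0\}\cong (0,\infty)\times\Sigma$ is connected and hence so is the smooth part of $\mfC$. At a smooth point $p$ of $\mfC$ near which $N_\infty = \mfC$ with multiplicity one is already known, Allard's regularity theorem applied to the $N_i$ at scale $\sim r(p)$ upgrades varifold convergence to smooth convergence on a slightly smaller neighborhood, which in turn forces $N_\infty = \mfC$ near $p$. Starting from the outer annulus and using connectedness of the smooth part, together with unique continuation for smooth minimal submanifolds that coincide on an open set, one propagates $N_\infty = \mfC$ across the entire smooth part. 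The multiplicity-one hypothesis on the $N_i$ and the constancy of density in the smooth region prevent extra sheets from appearing during this propagation.

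With $N_\infty = \mfC$ established on the smooth part, $N_i\to \mfC$ smoothly on any compact subset of this region. Applying this to the compact set $\mfC \cap \overline{B_{2\gamma\rho}}\cap \{r\geq 2\gamma\rho\mu\tau\}$, on which $r$ is bounded below by $2\gamma\rho\mu\tau>0$, the adapted Darboux coordinates of Section~\ref{sec: Darboux} produce, for all sufficiently large $i$, a potential function $F_i$ representing $N_i$ over the region $\mfC\cap B_{2\gamma\rho}\cap\{r>2\gamma\rho\mu\tau\}$ with $\sup(r^{-1}|dF_i|+|D^2F_i|)<\eta$, contradicting the assumed failure of the conclusion. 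The main obstacle is the third paragraph: the unique-continuation/identification step requires combining the connectedness of the link, Allard's regularity theorem, and both the multiplicity-one and volume hypotheses to rule out concentration or extra sheets in the smooth region of $\mfC$; once this is done, the existence of the potential $F_i$ with the desired estimate is a routine consequence of smooth convergence.
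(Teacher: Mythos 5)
Your overall strategy (contradiction, current/varifold compactness, identification of the limit with $\mfC$ by unique continuation, then Allard to recover the graph) is the same as the paper's, but there is a genuine gap in the identification step. Your third paragraph only shows that the smooth part of $\mfC$ is contained in ${\rm supp}(N_\infty)$ with multiplicity one, i.e.\ that $N_i\to\mfC$ \emph{near} $\mfC$. It does not show the reverse inclusion ${\rm supp}(N_\infty)\cap B_{2\gamma\rho}\subset \mfC$. Unique continuation, constancy of density, and connectedness of $\Sigma$ all operate along connected components of the regular set that already meet $\mfC$; none of them excludes a separate component of ${\rm supp}(N_\infty)$ (equivalently, a piece of $N_i$ for large $i$) sitting at positive distance from $\mfC$, for instance in the region $\{2\gamma\rho\mu\tau<r<2\rho\tau\}$ where you have no graphical hypothesis. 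Such a component would destroy the conclusion, since the proposition requires \emph{all} of $N\cap B_{2\gamma\rho}\cap\{r>2\gamma\rho\mu\tau\}$ to be the graph of $dF$. Multiplicity one of the current $N$ does not help: it bounds the multiplicity, not the support. This is also why your use of Allard "at scale $\sim r(p)$" during the propagation is delicate — Allard needs a density ratio close to $1$ at $p$, which again presupposes that no extra mass has accumulated near $p$.

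The missing mechanism is exactly the quantitative volume hypothesis \eqref{eq: volBndProp} with the specific constant $\omega_n(1-\gamma)^n\rho^n$. In the paper's proof, once $\mfC\subset\overline{N_\infty^{reg,0}}$ with multiplicity one is known, one supposes there is a (regular) point $p\in B_{2\gamma\rho}\cap{\rm supp}(N_\infty)\setminus\mfC$; since $B_{(1-\gamma+\delta)\rho}(p)\subset B_{2\rho}$ for some $\delta>0$, the monotonicity formula forces
\[
\cH^n(N_\infty\cap B_{2\rho})\;\geq\;\cH^n(\mfC\cap B_{2\rho})+\omega_n(1-\gamma+\delta)^n\rho^n,
\]
contradicting \eqref{eq: volBndProp}. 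This pins down $N_\infty=[\mfC]$ in $B_{2\gamma\rho}$, after which your final paragraph (smooth multiplicity-one convergence on the compact set $\{r\geq 2\gamma\rho\mu\tau\}$ plus the adapted Darboux charts, i.e.\ Lemma~\ref{prop: existGraph}) goes through. You do name the volume hypothesis in your closing sentence, but you locate the difficulty "in the smooth region of $\mfC$" and attribute the exclusion of extra sheets to constancy of density; the real issue is mass hiding \emph{off} $\mfC$, and the real tool is monotonicity at an off-cone point combined with the precise constant in \eqref{eq: volBndProp}. A secondary structural remark: the paper performs the unique-continuation propagation entirely at the level of the limit $N_\infty$ (using real analyticity and the fact that $N_\infty^{sing}$ has codimension $\geq 2$ to connect points by paths in $\mfC$ avoiding $N_\infty^{sing}$), and invokes Allard only once at the very end; this avoids the circularity inherent in applying Allard to the $N_i$ before the limit has been identified.
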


\begin{proof}
The proof is by contradiction. By rescaling we may assume that $\rho=1$.  Fix $\mu \in (0,\frac{1}{10})$. Suppose we cannot find $\eta_1$ small so that the graphical extension $F$ exists and satisfies the desired estimate. Then there is a sequence of special Lagrangians $N_i \subset \mathbb{C}^n$, and cylinders $\mfC_i = \mathbb{R}^k\times \Cone(\Sigma_i)$ for $\Sigma_i\in \mathcal{K}$ satisfying~\eqref{eq: volBndProp} and having the property that $N_i \cap B_2\cap \{r>2\tau\} $ is the graph associated to a potential function $f_i$ satisfying the estimates
\[
\sup|df_i| + \sup|D^2f_i| \leq \frac{1}{i}.
\]
but $N_i \cap B_{2\gamma}\cap \{r>2\gamma\mu\tau\}$ is not the graph of a potential function satisfying the desired estimate.  Since $\Sigma_i$ vary in a compact family there is a special Legendrian $\Sigma_{\infty} \in \cK$ such that $\mfC_i \rightarrow \mfC = \mathbb{R}^k\times \Cone(\Sigma_{\infty})$ smoothly away from $\mathbb{R}^k\times \{0\}$.  By the volume condition~\eqref{eq: volBndProp}, we can take a limit in the sense of closed integral currents \cite[Theorem 27.3]{SimonBook}, $N_{i}\rightarrow N_{\infty}$.  From the assumption on the potential functions $f_i$ we see that, in compact subsets of $B_2\cap \{r>2\tau\} $, $N_i$ converges smoothly to $\mfC$ with multiplicity $1$.  Let $N_{\infty}^{{\rm reg}}$ denote the regular set of ${\rm supp}(N_{\infty})$, which is dense by \cite[Theorem 36.2]{SimonBook}.  Since $\del N_{\infty} =0$ in $B_2$, Almgren's regularity theorem gives that $N_{\infty}^{\rm sing} = {\rm supp}(N_{\infty})\setminus N_{\infty}^{{\rm reg}}$ has Hausdorff codimension at least $2$ in ${\rm supp}(N_{\infty})$.  

Denote by $N_{\infty}^{{\rm reg}, 0}$ the path connected component of $N_{\infty}^{{\rm reg}}$ containing \\$N^{{\rm reg}}_{\infty} \cap \mfC \cap B_{2}\cap \{r>2\tau\}$.  We claim that $N_{\infty}^{{\rm reg}, 0} \subset \mfC$.  Indeed, by the regularity theory for the minimal surface system, both $\mfC$ and $N_{\infty}^{{\rm reg}, 0}$ are real analytic subsets of $\mathbb{C}^n$.  Since $N_{\infty}^{{\rm reg},0} $ agrees with $\mfC$ on an open subset, the result follows immediately from the standard fact that a real analytic function vanishing on an open set is identically zero.

Next we claim $\mfC \subset \overline{N_{\infty}^{reg,0}}$.   Choose any point $p_1$ in $\mfC\cap \{r\leq 2\tau\} \cap B_{2}$, and let $p_0 \in N_{\infty}^{reg, 0} \cap \mfC\cap B_{2}\cap \{r>2\tau\}$.  Since $N_{\infty}^{sing}$ has Hausdorff dimension at most $n-2$, we can find a smooth curve $\gamma(t), t\in[0,1]$ from $p_0$ to $p_1$, lying in $\mfC$, and such that $\gamma(t)$ avoids $N_{\infty}^{sing}$ for $t\in[0,1)$.  Clearly $\gamma(t) \in \mfC\cap N_{\infty}^{reg,0}$ for $t\in [0, \epsilon)$.  Let $T^*>0$ to be the first time such that $\gamma(T^*)\notin N_{\infty}^{reg, 0}$.  If $T^*<1$, then since $N_{\infty}$ is relatively closed we have $\gamma(T^*)\in N_{\infty}$.  On the other hand, $\gamma(T^*)\notin N_{\infty}^{sing}$ by assumption.  Hence $\gamma(T^*)\in N_{\infty}^{reg}$, and therefore is necessarily a point in $N_{\infty}^{reg, 0}$, a contradiction.  Therefore, $\gamma(t) \in N_{\infty}^{reg,0}$ for all $t<1$, and the claim follows. Now since $N_{\infty}^{reg,0}$ is equal to $\mfC$ with multiplicity $1$ in a large open set, it follows that $N_{\infty}^{reg,0}$ has multiplicity one everywhere.

Now suppose there exists a point $p \in B_{2\gamma} \cap {\rm supp}(N_{\infty}) \setminus \mfC$.  From the preceding discussion we must have $p \in B_{2\gamma} \cap \{r\leq 2\tau\}$, and we may as well assume $p$ is a regular point of $N_{\infty}$ (since such points are dense).   For some $\delta >0$, $B_{1-\gamma+\delta}(p) \subset B_2$, and so the monotonicity formula yields
\[
\mathcal{H}^n(N_{\infty}\cap B_2) \geq \mathcal{H}^n(\mfC\cap B_2) + \omega_n(1-\gamma+\delta)^n,
\]
contradicting the volume assumption.  Thus, $N_\infty = [\mfC]$ with multiplicity $1$ in $B_{2\gamma}$. Now the result follows from Lemma~\ref{prop: existGraph}. 
\end{proof}

We next state a regularity result for special Lagrangian graphs over $\mfC$.

\begin{lem}\label{lem: ellEst}
Fix $\gamma \in (\frac{1}{2}, 1)$ and let $\mathcal{K}\subset \cM$ be a compact set of smoothly varying special Legendrians.  There exist constants $\eta_2(\cK), C(\cK, \gamma)$ with the following effect: if $\mfC= \mathbb{R}^k\times \Cone(\Sigma)$ for some $\Sigma \in \cK$ and $N$ is a special Lagrangian which is the graph associated to a potential function $f$ over $ \mfC\cap B_2 \cap \{r > 2\tau\}$ such that $f$ satisfies the estimates
\[
\sup_{\mfC\cap B_2 \cap \{r > 2\tau\}}r^{-1}|df| + |D^2f| \leq \eta_2,
\]
then we have the estimates
\begin{equation}\label{eq: perturbEst}
\sup_{B_{2\gamma}\cap \{r >4\tau\}} r^{-1}|df| + |D^2f| \leq C\sup_{B_2 \cap \{r >2\tau\}}r^{-2}|f|
\end{equation}
and
\begin{equation}\label{eq: LinftyL2PotEst}
\sup_{B_{2\gamma} \cap \{r> 4\tau\} }|f| \leq C\tau^{-n/2}\|f\|_{L^{2}(B_{2}\cap \{r> 2\tau\})}
\end{equation}
\end{lem}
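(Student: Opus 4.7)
The plan is to invoke Lemma~\ref{lem: JoyceLemma}, which says that in any of the fixed Darboux charts the local potential $f = f_\alpha$ satisfies an elliptic PDE
\[
\Delta_{\mfC} f = Q(x, df, \nabla^2 f), \qquad Q(x,y,z) = O(|y|^2 + |z|^2),
\]
whose right-hand side is quadratic in the first and second derivatives of $f$. Under the hypothesis $r^{-1}|df| + |D^2 f| \leq \eta_2$ this is a small $C^{1,1}$-perturbation of the Laplace equation on $\mfC$, so both conclusions should reduce to standard interior elliptic estimates combined with a rescaling argument adapted to the cone geometry. The natural dilation $x \mapsto \lambda x$, $f \mapsto \lambda^{-2} f$ preserves both the equation and the smallness bound, which is precisely why $r^{-1}|df|$, $|D^2 f|$ and $r^{-2}|f|$ are the correct scale-invariant quantities. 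All constants will be uniform in $\Sigma \in \cK$ because the cone metric, the Darboux coordinates (cf.\ Remark~\ref{rk: DarbouxinFam}), and the nonlinearity $Q$ depend continuously on a compact parameter.

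For~\eqref{eq: perturbEst}, I would fix $p \in B_{2\gamma} \cap \{r > 4\tau\}$, set $r_0 = r(p)$ and $\rho = \min(r_0/2, 1-\gamma)$, so that $B_\rho(p) \subset B_2 \cap \{r > 2\tau\}$. Defining $\tilde f(y) := \rho^{-2} f(p+\rho y)$ on $B_1$ produces a function satisfying a PDE of the same form with $|d\tilde f| + |D^2 \tilde f| \leq C\eta_2$ and bounded coefficients in the linearization; standard interior elliptic regularity yields
\[
|d\tilde f|(0) + |D^2 \tilde f|(0) \leq C\sup_{B_1}|\tilde f|.
\]
Rescaling back and estimating $\sup_{B_\rho(p)}|f| \leq (r_0 + \rho)^2\sup_{B_\rho(p)} r^{-2}|f|$ gives the desired bound, with a constant depending only on $\gamma$ and $\cK$ through the ratio $r_0/\rho \leq 1 + 2(1-\gamma)^{-1}$.

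For~\eqref{eq: LinftyL2PotEst} the argument is the same except that at $p \in B_{2\gamma} \cap \{r > 4\tau\}$ one chooses the smaller scale $\rho = \min(\tau, 1-\gamma)$, which still yields $B_\rho(p) \subset B_2 \cap \{r > 2\tau\}$. The rescaled $\tilde f$ is nearly harmonic on $B_1$, and the standard $L^\infty$--$L^2$ mean value inequality for uniformly elliptic equations (e.g.\ De~Giorgi--Nash--Moser) gives $\sup_{B_{1/2}}|\tilde f| \leq C\|\tilde f\|_{L^2(B_1)}$. Undoing the rescaling yields $|f(p)| \leq C\rho^{-n/2}\|f\|_{L^2(B_\rho(p))} \leq C \tau^{-n/2}\|f\|_{L^2(B_2\cap\{r>2\tau\})}$ when $\tau \leq 1-\gamma$; the case $\tau > 1-\gamma$ follows immediately since $\tau^{-n/2}$ is then bounded below by a $\gamma$-dependent constant.

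I do not anticipate any serious obstacle: both estimates are essentially interior elliptic regularity on balls whose radius scales with $r$ or with $\tau$. By Remark~\ref{rk: potLocalDefined}, norms of $f$ are suprema over the fixed chart collection $\{V'_\alpha\}$, so it suffices to prove each estimate in a single chart and then take a supremum, with no chart-matching required. The only thing to track is that the linearization of the Joyce equation has uniformly bounded coefficients on the unit-scale rescaled balls, which is immediate from the $C^{1,1}$ smallness hypothesis and the compactness of $\cK$.
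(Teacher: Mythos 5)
Your proposal is correct and is essentially the paper's own argument spelled out: the paper simply cites the Cordes--Nirenberg estimate plus Schauder theory for \eqref{eq: perturbEst} and the local maximum principle \cite[Theorem 9.20]{GT} ``together with scaling and translating'' for \eqref{eq: LinftyL2PotEst}, and your rescaled balls of radius comparable to $r$ (respectively $\tau$) are exactly that scaling/translating argument. The one point worth naming explicitly is that the interior $C^{1,\alpha}$ control needed before Schauder applies comes from the Cordes--Nirenberg estimate, since the hypothesis only makes the coefficients $L^\infty$-close to the identity rather than H\"older continuous; your remark about the ``small $C^{1,1}$-perturbation'' is implicitly this.
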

\begin{proof}
The first claim is a straightforward consequence of the Cordes-Nirenberg estimate \cite{CaffarelliCabre} and the Schauder theory \cite{GT}.  The second claim follows from the local maximum principle for solutions of elliptic PDEs \cite[Theorem 9.20]{GT} together with scaling and translating.

\end{proof}

At this point we have the necessary ingredients to prove the main quantitative extension result for the potential.  We introduce some notation to make the exposition more efficient.  Suppose $N$ is an exact special Lagrangian which is associated to a potential function over $\mfC \cap B_{\rho}\cap \{r>\tau \rho\}$.  We define the following quantities, where the underline signifies scale invariance:
\begin{equation}\label{eq: normDefn}
\begin{aligned}
Av_{N}(\beta, \rho) &= \frac{1}{{\rm Vol}(N\cap B_{\rho})} \int_{N\cap B_{\rho}}\beta\\
\|\underline{\beta}\|^2_{L^{2}(N\cap B_\rho)}  &= \rho^{-n-4}\int_{N\cap B_\rho}\beta^2\\
\|\underline{y}\|^2_{L^{2}(N\cap B_{\rho})} &= \rho^{-n-2}\sum_{i=1}^{k}  \int_{N\cap B_{\rho}} y_{i}^2\\
\|\underline{\beta}, \underline{y}\|_{L^{2}(N\cap B_{\rho})} &= \|\underline{\beta}\|_{L^{2}(N\cap B_\rho)} +\|\underline{y}\|_{L^{2}(N\cap B_{\rho})}\\
\|\underline{f}\|^2_{L^{2}}(\rho, \tau) &= \rho^{-n-4}\int_{\mfC\cap B_{\rho} \cap \{r>\tau \rho\}} f^2\\
\end{aligned}
\end{equation}

\begin{defn}
Consider a special Lagrangian cylinder of the form $\mfC:= \mathbb{R}^k\times \bfC$.  Fix $\gamma \in [\frac{1}{10}, 1)$.  We define the following properties that an exact special Lagrangian $N$ may possess:
\begin{itemize}
\item[(i)] We say that $N$ has the {\bf small graph property} $P_1(\eta, \tau, \delta)$ with respect to $\mfC$ if $N$ is the graph associated to a normalized potential $f$ defined on $\mfC\cap B_{2}\cap \{r>2\tau\}$ satisfying the bounds
\[
\begin{aligned}
\sup_{\mfC\cap B_{2}\cap \{r> 2\tau\}} r^{-1}|df|+|D^2f| &\leq \eta\\
\|\underline{f}\|_{L^2}(2, \tau) &\leq \delta.
 \end{aligned}
 \]
 \\
 \item[(ii)] We say that $N$ has the {\bf volume property} $P_2(\gamma)$ with respect to $\mfC$ if,  for each point $p \in (\mathbb{R}^k \times \{0\})\cap B_{2}$,   $N$ satisfies the volume bound
\[
\mathcal{H}^{n}(N\cap B_{2}(p)) \leq \mathcal{H}^{n}(\mfC\cap B_{2}(p)) +\omega_n(1-\gamma)^n
\]
\\
\item[(iii)] We say that $N$ has the {\bf harmonic property} $P_3(\delta)$ if, on $N$, the harmonic functions $\beta, y_i$ satisfy
\[
\|\underline{\beta}, \underline{y}\|_{L^{2}(N\cap B_4)} \leq \delta
\]
\end{itemize}
Finally, for $i=1,2,3$ we say that $N$ has property $P_i$ at scale $\rho$ if $\rho^{-1}N$ has property $P_i$.
\end{defn}

We now state the quantitative propagation of smallness estimate which will be a key component of the arguments to follow.  Roughly, the estimate says that if $N$ is the graph over $\mfC \cap B_{2} \cap \{r>2\tau\}$ of a small potential, then $N$ is graphical over $\mfC \cap B_1 \cap \{r> \tau_*\}$ for a quantifiable $\tau_* \leq \tau$ depending on $\|\underline{\beta}, \underline{y}\|_{L^2(N\cap B_{4})}$.  

\begin{prop}\label{prop: propOfSmall}
Fix $\gamma \in [\frac{1}{\sqrt{2}},1)$ and let $\mathcal{K}\subset \mathcal{M}$ be a compact set of smooth special Legendrians.  Let $\mfC = \mathbb{R}^k\times \Cone(\Sigma_{\kappa})$ for some $\kappa \in \cK$.  There exists constants $\delta_1, \delta_2, \eta_3, C$ depending only on $\mathcal{K}, \gamma$ with the following effect:

Suppose $N$ is an exact special Lagrangian in $\mathbb{C}^n$ satisfying
\begin{itemize}
\item The small graph property $P_1(\eta_3, 10^{-1}, \delta_1)$ at scale $\rho$ with respect to $\mfC$
\item The volume property $P_2(\gamma)$ at scale $\rho$ with respect to $\mfC$
\item The harmonic property and $P_3(\delta_2)$ at scale $\rho$.
\end{itemize} 
Define $\rho_*= (2-10^{-2})\gamma\rho$.  There exists $\tau_*>0$ satisfying the bound
\[
C\eta_3^{-1}\|\underline{\beta}, \underline{y}\|_{L^{2}(N\cap B_{4\rho})} \leq \tau_*^2 \leq 2C\eta_3^{-1}\|\underline{\beta}, \underline{y}\|_{L^{2}(N\cap B_{4\rho})}
\]
 such that $N$ is the graph associated to a local potential function $F$ extending $f$ over $ \mfC  \cap B_{\rho_*} \cap \{r>\rho\tau_*\}$ and such that the following estimates hold on $ \mfC  \cap B_{\rho_*} \cap \{r>\rho\tau_*\}$
 \[
  r^{-2}|F| \leq C\left(\left(\frac{r}{\rho}\right)^{-2}\|\underline{\beta}, \underline{y}\|_{L^{2}(N\cap B_{4\rho})} +\|\underline{f}\|_{L^{2}}(2\rho,10^{-1})\right),
  \]
  and
 \[
 r^{-1}|dF|+|D^2F| \leq C\left(\left(\frac{r}{\rho}\right)^{-2}\|\underline{\beta}, \underline{y}\|_{L^{2}(N \cap B_{4\rho})}+ \|\underline{f}\|_{L^{2}}(2\rho,10^{-1}))\right).
 \]
\end{prop}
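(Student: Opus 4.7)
After rescaling to $\rho = 1$, the proof is a bootstrap in the inner radial cutoff $\tau$: starting from the initial graphicality on $\mfC \cap B_2 \cap \{r > 1/5\}$ supplied by $P_1(\eta_3, 10^{-1}, \delta_1)$, I iteratively push the graph representation inward using Proposition~\ref{prop: existPot1}, controlling each step by a pointwise estimate on $f$ derived from the harmonic functions $\beta, y_i$. The two inputs are: (a) Moser-type $L^\infty$ bounds
\[
\sup_{N\cap B_2}\bigl(|\beta|+|y|\bigr)\leq C\,\|\underline\beta,\underline y\|_{L^2(N\cap B_4)},
\]
which follow from Lemma~\ref{lem: subharmonicFunctions} (weak subharmonicity of $\beta^2, y_i^2$ on $N$) together with the standard mean-value inequality for stationary varifolds, the additive constant in $\beta$ being pinned down by the normalization of $f$ from Definition~\ref{defn: normalized}; and (b) the identities $y_i = -\partial f/\partial x_i$ and $\beta = -\tfrac{1}{2}R^3\,\partial_R(f/R^2)$ coming from Section~\ref{sec: Darboux}.

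\textbf{Pointwise estimate for $f$.} The heart of the argument is to show that whenever the graph is defined on a region containing $p = (x,w) \in \mfC \cap B_{\rho_*}$ with $r(p) = |w| \geq \tau$,
\[
R(p)^{-2}|f(p)| \leq C\Bigl(R(p)^{-2}\sup_{N\cap B_2}(|\beta|+|y|) + \|\underline f\|_{L^2}(2,10^{-1})\Bigr).
\]
I would split on the ratio $R(p)/r(p)$. When $R(p) \leq 9\,r(p)$, I integrate the radial identity $(f/t^2)' = -2\beta/t^3$ along the ray $t\hat p$ from $t = R(p)$ outward to some $t_1 \in (R/(5r),2]$ chosen so that the endpoint lies in the initial graphical region $\{r > 1/5\}$, where $|f(p(t_1))|/t_1^2$ is controlled by $\|\underline f\|_{L^2}(2,10^{-1})$ via the $L^2\to L^\infty$ estimate in Lemma~\ref{lem: ellEst}; the remainder $\int_R^{t_1}\beta/t^3\,dt$ contributes at most $\tfrac{1}{2}R^{-2}\sup|\beta|$. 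When $R(p) > 9\,r(p)$, the point is close to the singular axis; I first move in the $x$-direction from $p$ to $p' = (9r\hat x, w)$ by integrating $y_i = -\partial_{x_i}f$ along the straight segment (which stays at fixed $r = |w|$, hence inside the current graphical region) to obtain $|f(p)-f(p')|\leq R(p)\sup|y|$, and then apply the previous case at $p'$, where $R(p')/r(p') = \sqrt{82} < 9$.

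\textbf{Bootstrap and choice of $\tau_*$.} With the pointwise $|f|$ bound in hand, interior elliptic regularity for the special Lagrangian graph equation (Lemma~\ref{lem: ellEst}) upgrades it to
\[
\sup_{\{r>2\tau\}\cap B_{2\gamma}}\bigl(r^{-1}|dF|+|D^2F|\bigr) \leq C\bigl(\tau^{-2}\|\underline\beta,\underline y\|_{L^2(N\cap B_4)} + \|\underline f\|_{L^2}(2,10^{-1})\bigr).
\]
Defining $\tau_*^2 := 2C\eta_3^{-1}\|\underline\beta,\underline y\|_{L^2(N\cap B_4)}$, which satisfies the two-sided bound in the statement, and taking $\delta_1, \delta_2$ sufficiently small in terms of $\eta_3$, this $C^2$-type bound stays below the smallness threshold $\eta_1(\cK,\eta_3,\ldots)$ required by Proposition~\ref{prop: existPot1} for every $\tau \geq \tau_*$. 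The volume hypothesis $P_2(\gamma)$ provides exactly the volume excess condition consumed by Proposition~\ref{prop: existPot1} (applied in balls centered along $\mathbb{R}^k \times \{0\}$). A finite, quantitatively-chosen sequence of applications of Proposition~\ref{prop: existPot1} (tuning $\mu$ and the intermediate shrinking parameter there so that $\tau$ decreases geometrically toward $\tau_*$ while the outer ball shrinks from $2$ only down to $\rho_* = (2-10^{-2})\gamma$) produces graphicality on the target region $\mfC \cap B_{\rho_*} \cap \{r > \tau_*\}$. Re-applying the pointwise estimate and Lemma~\ref{lem: ellEst} on this final region yields the two inequalities claimed in the proposition.

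\textbf{Main obstacle.} The substantive ingredient is the pointwise estimate in the second paragraph. Without the exactness hypothesis (i.e., without $\beta$) one would only have the $y_i$, which control $f$ transversally to $\mfC$ but not in the direction of the singular axis $\mathbb{R}^k \times \{0\}$; the radial identity for $\beta$ is precisely what handles the near-singular regime $R \gg r$. Everything else---choosing the small constants $\delta_1, \delta_2, \eta_3$, tracking constants through the Moser and elliptic estimates, and arranging the parameters in the finite iteration of Proposition~\ref{prop: existPot1} so that the ball shrinking and the $\tau$-shrinking can be achieved simultaneously---is bookkeeping, though somewhat tedious.
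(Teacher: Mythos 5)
Your proposal follows the paper's proof in all essentials: $L^\infty$ control of $\beta, y_i$ via Lemma~\ref{lem: subharmonicFunctions} and the mean value inequality, a pointwise bound on $f$ obtained by integrating the identities $\beta=-\tfrac12 R^3\partial_R(f/R^2)$ and $y_i=-\partial f/\partial x_i$ along paths anchored in the initial graphical region, and an iterated application of Proposition~\ref{prop: existPot1} in balls centered on the axis whose termination determines $\tau_*^2\sim \eta_3^{-1}\|\underline{\beta},\underline{y}\|_{L^2(N\cap B_4)}$. This is exactly the paper's ``Claim 1 / Claim 2'' structure, and your identification of $\beta$ as the ingredient that handles the regime $R\gg r$ is the right heart of the matter. (One bookkeeping point you gesture at but should make explicit: to keep the outer radius from collapsing geometrically under repeated use of Proposition~\ref{prop: existPot1}, the $k$-th application must be performed after rescaling by the current inner cutoff $a_k$, so that the outer radius shrinks only by the summable additive amount $\sim a_k$; this is the paper's sequence $s_{k+1}=s_k-a_k$.)

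The one step that fails as written is your choice of integration path. In both cases you follow the $\mathbb{C}^n$-radial ray through $p$ (or through $p'$) outward until $r>1/5$; since $r/R$ is constant along such a ray, the endpoint sits at $R=(R/r)\cdot\tfrac15$, which is as large as $9/5$ in case one and $\sqrt{82}/5\approx 1.81$ in case two. But the region where the potential is ever constructed is $B_2\cap\{r>1/5\}$ together with sets of the form $B_{2s_k}\cap\{r>\epsilon_k\}$ whose outer radii never exceed $2\gamma$; for $\gamma$ near the allowed lower bound $1/\sqrt2$ one has $2\gamma\approx 1.414<1.8$, so the portion of your ray with $R\in(2\gamma,1.81)$ and $r<1/5$ lies outside every set on which $f$ is known to exist, and the integration cannot be carried out there. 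The repair is the paper's path: from $p=(x,w)$ move first in the $x$-direction all the way to $(0,w)$ (this keeps $r$ fixed and decreases $R$, so it stays in the current graphical region, and is controlled by $\sup|y|$), then integrate the $\beta$-identity radially inside the slice $\{z=0\}$, where $R=r\leq 1<2s_k$ for every $k$, out to the anchor $(0,w/|w|)$ on the unit cross-section, where Lemma~\ref{lem: ellEst} bounds $|f|$ by $\|\underline{f}\|_{L^2}(2,10^{-1})$. Two smaller points: $\sqrt{82}>9$, so case two does not literally reduce to case one as stated (adjust the threshold); and your displayed pointwise claim carries $R^2\|\underline{f}\|$ where the proposition requires $r^2\|\underline{f}\|$ --- your argument in fact yields the stronger $r^2$ form (in case one because $R\leq 9r$, in case two because the bound is routed through $p'$ with $R(p')$ comparable to $r$), so that is the version you should record and feed into the bootstrap.
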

\begin{proof}
By rescaling we may assume that $\rho=1$. Fix positive numbers $\gamma \in [\frac{1}{\sqrt{2}}, 1)$ and let $\mu \in (0,\frac{1}{10})$ be a constant to be determined.  To avoid carrying factors of $10$, we will set $\tau= \frac{1}{10}$. Fix $\eta_2$ to be the constant appearing in Lemma~\ref{lem: ellEst}, and let $\eta_1 = \eta_1(\cK, \eta_2, \tau, \frac{\mu}{100},\gamma)$ be the constant appearing in Proposition~\ref{prop: existPot1}.  We fix $\eta_3  <\min\{\eta_1, \eta_2\}$.   By volume monotonicity the volume bound assumption $P_2(\gamma)$ implies that 
\[
\mathcal{H}^{n}(N\cap B_{s}(p)) \leq \mathcal{H}^{n}(\mfC\cap B_{s}(p)) +\omega_ns^n(1-\gamma)^n
\]
for any point $p \in \mfC^{sing}\cap B_{2}(0)$, and any $s\leq2$; in particular, $N$ has $P_2(\gamma)$ at all scales $s\leq 2$.  Define sequences $\{s_{k}\}, \{a_{k}\}$ by
\[
\begin{aligned} 
a_k&=\gamma\mu^{k+1}\\
s_{k}&=s_{k-1}-a_{k-1} \quad s_0=\gamma
\end{aligned}
\]
The proof is based on the following two claims:
\\
\begin{itemize}
\item {\bf Claim 1}: Suppose that $f$ is defined on $\mfC \cap B_{s}\cap \{r>t\}$ for some $0<t<1<s<2$. Then, on this set we have
 \[
 r^{-2}|f| \leq C_{A}\left(r^{-2}\|\underline{\beta}, \underline{y}\|_{L^2(N\cap B_{4})} + \|\underline{f}\|_{L^{2}}(2,\tau)\right)
 \]
 for a constant $C_{A}$ depending only on $\cK$.
 \\
 \item {\bf Claim 2}: There is a $\delta >0$ depending only on $\cK, \gamma$ such that if $f$ is defined on $B_{2s_{k}} \cap \{r>2a_k\frac{\tau}{100}\}$ and satisfies
 \begin{equation}\label{eq: weakEst}
 \sup_{\mfC\cap B_{2s_{k}} \cap \{r>2a_k\frac{\tau}{100}\}} r^{-1}|df| + |D^2f| < \eta_{2}
 \end{equation}
 and
 \begin{equation}\label{eq: strongEst}
\sup_{\mfC\cap B_{2s_{k}} \cap \{r>2\mu^k\frac{\tau}{100}\}} r^{-2}|f| < \delta \eta_{3}
 \end{equation}
 then $f$ is defined on $\mfC \cap B_{2s_{k+1}}\cap\{r>2a_{k+1}\frac{\tau}{100}\}$ and satisfies
 \[
 \sup_{\mfC \cap B_{2s_{k+1}}\cap\{r>2a_{k+1}\frac{\tau}{100}\}} r^{-1}|df|+|D^2f| <\eta_2
 \]
 \end{itemize} 
 Let us first explain the proof of the the proposition, assuming the Claim 1 and Claim 2.  Let us first fix the constants.  Note that
 \[
 s_{k} = \gamma \left(1-\sum_{\ell=0}^{k}\mu^{\ell+1}\right) \geq \gamma\left(\frac{1-2\mu}{1-\mu}\right)
 \]
 thus for $\mu$ sufficiently small we can ensure that $2s_{k} \geq (2-10^{-2})$ for all $k$.  Fix $\delta_1 = \frac{\delta \eta_{3}}{2C_A}$ where $C_{A}, \delta$ are the constants appearing in Claims 1 and 2 respectively.  We now consider the following set
 \[
 \mathcal{I} = \{ k \in \mathbb{Z}_{\geq 0} : f \text{ exists on } \mfC\cap B_{2s_k} \cap \{r>2a_k \frac{\tau}{100}\} \text{ and satisfies~\eqref{eq: weakEst} } \}
 \]
 
 From our choice of $\eta_3$ and the volume assumption, Proposition~\ref{prop: existPot1} implies $0 \in \mathcal{I}$.  We claim that if $k \in \mathcal{I}$ and 
 \begin{equation}\label{eq: inductionEstPropSmall}
 (2a_{k}\frac{\tau}{100})^{-2}\|\underline{\beta}, \underline{y}\|_{L^2(N\cap B_{4})} \leq \frac{\delta \eta_{3}}{2C_{A}}
 \end{equation}
 then $k+1 \in \mathcal{I}$.  Indeed, if~\eqref{eq: inductionEstPropSmall} holds then by Claim 1 we have~\eqref{eq: strongEst} and hence by Claim 2, $k+1 \in \mathcal{I}$.  The second estimate in the statement follows from Lemma~\ref{lem: ellEst}.   Note that we may choose $\delta_2$ small so that~\eqref{eq: inductionEstPropSmall} holds for $k=0$.  This yields the proposition, assuming the claims.  
 \\
 
 \noindent {\bf Proof of Claim 1}:  First note that since $\beta^2, y_i^2$ are subharmonic by Lemma~\ref{lem: subharmonicFunctions}, the mean-value inequality (see e.g. \cite[Proposition 3.8]{CMbook} ) yields the bounds 
 \[
 \begin{aligned}
 \|\beta\|_{L^{\infty}(N\cap B_2)} &\leq C\|\underline{\beta}\|_{L^{2}(N\cap B_4)},\\
 \|y_i\|_{L^{\infty}(N\cap B_2)} &\leq C\|\underline{y}\|_{L^{2}(N\cap B_4)}
 \end{aligned}
 \]
for a constant $C$ depending only on $n$.  Fix a point $(0,z_1') \in \mfC\cap B_{s}\cap \{r>t\}$, and let $z'_0 = \frac{z'_1}{|z'_1|} $ which is a point in $\mfC \cap \{r=1\}$.  Integrating ~\eqref{eq: betaVsPot} along the curve from $(0, z_0')$ to $(0,z'_1)$ with tangent vector $\frac{\del}{\del R}$ yields
\[
\big||z'_1|^{-2}f(0, z'_1) - f(0, z'_0)\big| \leq \|\beta\|_{L^{\infty}(N \cap B_2)} \left(\frac{1}{|z'_1|^2} - 1 \right)
\]
and hence
\[
|z'_1|^{-2}|f(0, z'_1)| \leq \sup_{\mfC \cap \{(z,z'): |z'|=1, z=0\}}|f|+ Cr^{-2}\|\underline{\beta}\|_{L^{2}(N\cap B_4)}.
\]
Note that while $f$ is only locally defined, the curve lies in a single Darboux coordinate chart as constructed in Section~\ref{sec: Darboux}.  On the other hand by integrating the formula $y=- \frac{\del f}{\del x_i}$ (c.f. ~\eqref{eq: GraphCylinder}) along radial curves in $\mfC\cap \{z'=z'_1\}$ we get the bound in $ \mfC\cap B_s\cap\{r>t\}$ of the form
\begin{equation}\label{eq: preciseEstDecay}
r^{-2}|f| \leq  C_A (r^{-2}\|\underline{\beta}, \underline{y}\|_{L^{2}(N\cap B_{4})} +\|\underline{f}\|_{L^{2}}(2,10^{-1})) 
\end{equation}
for a uniform constant $C_A$ depending only on $\cK$.  Here we used  Lemma~\ref{lem: ellEst} to bound 
\[
\sup_{\mfC \cap \{(z,z'): z=0,\, |z'|=1\}}|f| \leq C\|\underline{f}\|_{L^{2}}(2,10^{-1})).
\]
Again, while $f$ is not globally defined, the integration takes place in a single Darboux coordinate chart.  This establishes the claim.
\\
\\
{\bf Proof of Claim 2}: Rescaling by $a_{k}^{-1}$ we obtain a potential for $a_{k}^{-1}N$ defined on $\mfC \cap B_{2s_ka_{k}^{-1}} \cap \{r>2\frac{\tau}{100}\}$ satisfying
\[
 \sup_{\mfC \cap B_{2s_ka_{k}^{-1}} \cap \{r>2\frac{\tau}{100}\}} r^{-1}|df| + |D^2f| < \eta_{2}
 \]
 and
 \[
\sup_{\mfC \cap B_{2s_ka_{k}^{-1}} \cap \{r>2\frac{\tau}{100}\}} r^{-2}|f| < \delta \eta_{3}
 \]
Covering this set by balls of radius $2$ and applying Lemma~\ref{lem: ellEst} we obtain
\[
 \sup_{\mfC \cap B_{2s_ka_{k}^{-1}-1} \cap \{r>2\tau\}} r^{-1}|df| + |D^2f| < C\delta \eta_{3}
 \]
 for a constant $C$ depending on $\cK, \gamma$.  Take $\delta= C^{-1}$.  Since the volume assumption $P_2(\gamma)$ holds, for each $p\in \mathbb{R}^k\times \{0\}$ the ball $B_{2}(p)$ satisfies the assumptions of Proposition~\ref{prop: existPot1}.  Therefore we obtain an extension of $f$ to $\mfC \cap B_{2\gamma}(p)\cap \{r >2\gamma \mu \frac{\tau}{100}\}$.  The locally defined normalized potentials glue to a well define normalized potential in each Darboux coordinate chart, yielding and extension of $f$ to $\mfC \cap B_{2s_ka_{k}^{-1}-2} \cap \{r>2\gamma \mu \frac{\tau}{100}\}$.  Scaling down by $a_k$ yields the claim.
\end{proof}

\section{Decay Estimates}\label{sec: Decay}
In this section we prove decay estimates which will eventually lead to the uniqueness of the tangent cone.  We begin with the following elementary result

\begin{lem}\label{lem: betaAndYRegEst}
Let $\cK \subset \cM$ be a compact set in the moduli space of special Legendrians and suppose that $\mfC= \mathbb{R}^k\times \Cone(\Sigma_\kappa)$ for some $\kappa \in \cK$.  Then there exists $\eta_4 = \eta_4(\cK, \tau)$ with the following effect: suppose $N$ is an exact special Lagrangian which is the graph associated to a potential $f$ defined on $\mfC \cap B_{2\rho} \cap \{r>2\rho\tau\}$ and $f$ satisfies the bounds
\[
\sup_{\mfC\cap B_2 \cap \{r>2\rho\tau \}}r^{-1}|df| + |D^2f| < \eta_{4}.
\]
Then we have the estimate
\[
\rho^{-n-4}\int_{N \cap B_{\rho} \cap \{r>4\rho\tau\}} \beta^2 + \sum_{i=1}^k\rho^{-n-2}\int_{N \cap B_{\rho} \cap \{r>4\rho\tau\}} y_i^2  \leq C\tau^{-2}  \|\underline{f}\|^2_{L^2}(2\rho,\tau)
\]
where $C= C(\cK)$ depends only on $\cK$.
\end{lem}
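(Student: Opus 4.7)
The plan is to pull back the integrals on $N$ to integrals on $\mfC$ via the local graphical parametrizations, express $\beta$ and $y_i$ as first-order differential expressions in the local potentials $f_\alpha$, and then apply an interior Caccioppoli estimate to the nearly-Laplace PDE satisfied by $f_\alpha$. Since $r^{-1}|df_\alpha|+|D^2 f_\alpha|<\eta_4$ is small, the graph map $\Phi_\alpha:\mfC\cap V'_\alpha\to N$ is a $C^1$-small perturbation of the inclusion; in particular its Jacobian lies in $[1/2,2]$ and $\Phi_\alpha^{-1}(N\cap B_\rho\cap\{r>4\rho\tau\})\subset \mfC\cap B_{(1+C\eta_4)\rho}\cap\{r>(4-C\eta_4)\rho\tau\}\subset \mfC\cap B_{2\rho}\cap\{r>2\rho\tau\}$ provided $\eta_4$ is sufficiently small. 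Using a partition of unity subordinate to the fixed finite cover $\{V'_\alpha\}$, it suffices to control the contribution of each chart.

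From the graphical description \eqref{eq: GraphCylinder} and the normalized potential equation \eqref{eq: betaVsPot} (Definition~\ref{defn: normalized}), one has the pointwise formulas
\[
y_i\circ\Phi_\alpha = -\frac{\partial f_\alpha}{\partial x_i}, \qquad \beta\circ\Phi_\alpha = f_\alpha - \frac{R}{2}\frac{\partial f_\alpha}{\partial R},
\]
so that on $\mfC\cap B_\rho$, $|y_i\circ\Phi_\alpha|^2\leq|\nabla f_\alpha|^2$ and $|\beta\circ\Phi_\alpha|^2\leq 2 f_\alpha^2+\tfrac{\rho^2}{2}|\nabla f_\alpha|^2$. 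It therefore suffices to bound $\int_\mfC f_\alpha^2$ and $\int_\mfC|\nabla f_\alpha|^2$ on the appropriate region.

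By Lemma~\ref{lem: JoyceLemma}, $f_\alpha$ solves a quasilinear uniformly elliptic equation $\Delta_\mfC f_\alpha = Q(x,df_\alpha,D^2f_\alpha)$ which, for small $\eta_4$, is a small perturbation of the Laplace equation on $\mfC$; recast in linear form with leading coefficients $\delta^{ij}+O(\eta_4)$, it obeys the standard Caccioppoli inequality with constants independent of $\eta_4$. For each $p\in\mfC\cap B_\rho\cap\{r>4\rho\tau\}$ the ball $B_{\rho\tau}(p)$ lies in $\mfC\cap B_{2\rho}\cap\{r>2\rho\tau\}$ and avoids the singular set, so Caccioppoli gives
\[
\int_{B_{\rho\tau/2}(p)}|\nabla f_\alpha|^2 \leq \frac{C}{(\rho\tau)^2}\int_{B_{\rho\tau}(p)} f_\alpha^2.
\]
Covering $\mfC\cap B_\rho\cap\{r>4\rho\tau\}$ by such balls with bounded overlap (Vitali) and summing yields
\[
\int_{\mfC\cap B_\rho\cap\{r>4\rho\tau\}}|\nabla f_\alpha|^2 \leq \frac{C}{(\rho\tau)^2}\int_{\mfC\cap B_{2\rho}\cap\{r>2\rho\tau\}} f_\alpha^2.
\]

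Combining the pointwise formulas with this gradient estimate, the Jacobian bound, and summing over the finitely many Darboux charts gives
\[
\int_{N\cap B_\rho\cap\{r>4\rho\tau\}} y_i^2 \leq \frac{C}{(\rho\tau)^2}\int_{\mfC\cap B_{2\rho}\cap\{r>2\rho\tau\}} f^2, \qquad \int_{N\cap B_\rho\cap\{r>4\rho\tau\}} \beta^2 \leq \frac{C}{\tau^2}\int_{\mfC\cap B_{2\rho}\cap\{r>2\rho\tau\}} f^2,
\]
and multiplying by $\rho^{-n-2}$, resp.\ $\rho^{-n-4}$, and using $\rho^{-n-4}\leq 2^{n+4}(2\rho)^{-n-4}$ gives the claim, with $C$ depending only on $\cK$ through the cardinality of the finite cover and the Caccioppoli constant. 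The only point requiring care is justifying Caccioppoli with $\eta_4$-independent constants for the quasilinear PDE from Lemma~\ref{lem: JoyceLemma}; this is standard once the equation is put in divergence form, which is possible because the special Lagrangian condition is itself a divergence-free equation on a Lagrangian graph.
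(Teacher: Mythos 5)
Your argument is essentially the paper's: the paper's (sketched) proof likewise pulls the integrals back to $\mfC$, uses the formulas~\eqref{eq: betaVsPot} and~\eqref{eq: GraphCylinder} to write $\beta = f-\tfrac{1}{2}R\,\partial_R f$ and $y_i=-\partial f/\partial x_i$, and then invokes interior elliptic estimates (Lemma~\ref{lem: ellEst}) at scale comparable to $\rho\tau$ to bound the derivative terms by the $L^2$ norm of $f$; your bookkeeping of the powers of $\rho$ and $\tau$ is correct and does produce the stated $\tau^{-2}$. The one step to tighten is your Caccioppoli inequality: the equation from Lemma~\ref{lem: JoyceLemma} is quasilinear and \emph{non-divergence} form, with $Q$ depending quadratically on $D^2f$, so the ``standard Caccioppoli inequality'' is not directly available and the appeal to a divergence structure of the special Lagrangian graph equation is not justified as stated; the clean fix is to replace that step by the interior $W^{2,2}$ (or the pointwise Cordes--Nirenberg/Schauder) estimate for uniformly elliptic equations with coefficients $\delta^{ij}+O(\eta_4)$ --- which is exactly what~\eqref{eq: perturbEst} of Lemma~\ref{lem: ellEst} packages --- applied on the balls $B_{c\,r(p)}(p)$ and summed over a Vitali cover, yielding the same bound $\int_{\mfC\cap B_\rho\cap\{r>4\rho\tau\}}|\nabla f|^2\leq C(\rho\tau)^{-2}\int_{\mfC\cap B_{2\rho}\cap\{r>2\rho\tau\}}f^2$ with constants depending only on $\cK$.
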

\begin{proof}
We only sketch the proof.  By rescaling we may assume that $\rho=1$.  Choosing $\eta_4$ small depending on $\tau$ we can ensure that $N\cap B_{1} \cap \{r>4\tau\}$ is part of the graph over $\mfC\cap B_2\cap \{r>2\tau\}$.  From the formulas~\eqref{eq: betaVsPot} and~\eqref{eq: GraphCylinder}  relating for $\beta, y_i$ and the potential $f$, together with Lemma~\ref{lem: ellEst} the result follows.
\end{proof}

We now state a decay lemma in the easiest case.

\begin{lem}\label{lem: easyDecay}
Fix $\rho>0$, $\tau \in (0,\frac{1}{10}]$.  Let $\mathcal{K}\subset \cM$ be a compact set in the moduli space of smooth special Legendrians.  Suppose $N$ is an exact special Lagrangian which is the graph of a potential function $f$ over $(\mathbb{R}^k\times \Cone(\Sigma_{\kappa})) \cap B_{2\rho}\cap \{r>2\rho\tau\}$ for some $\kappa \in \cK$ .  Let $\eta_4= \eta_{4}(\cK, \tau)$ be the constant appearing in Lemma~\ref{lem: betaAndYRegEst}.  Then there exists a constant $\delta_3= \delta_3(\cK)>0$ such that if the potential function $f$ satisfies
\[
\sup_{\mfC\cap B_{2\rho}\cap \{r>2\rho\tau\}}r^{-1}|df| + |D^2f| < \eta_4,
\]
and if
\begin{itemize}
\item[$(i)$]
\[
\|\underline{f}\|_{L^{2}}(2\rho, \tau) \leq \delta_3\tau^2 10^{-3} \|\underline{\beta}, \underline{y}\|_{L^{2}(N\cap B_{4\rho})}
\]
\item[$(ii)$]
\[
\rho^{-n}\cH^n(N\cap B_{\rho}\cap\{r<4\rho \tau\}) \leq \omega_n2^{-(n+8)}10^{-6}
\]
\end{itemize}
then the following estimate holds:
\[
\begin{aligned}
\|\underline{\beta}, \underline{y}\|_{L^{2}(N\cap B_{\rho})} &\leq \frac{1}{100}\|\underline{\beta}, \underline{y}\|_{L^{2}(N\cap B_{4\rho})}.
\end{aligned}
\]
\end{lem}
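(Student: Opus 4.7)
\medskip

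\noindent\textbf{Proof proposal.} The plan is to split $N\cap B_\rho$ into the graphical region $N\cap B_\rho\cap\{r>4\rho\tau\}$, where $f$ controls $\beta$ and $y_i$, and the non-graphical region $N\cap B_\rho\cap\{r\leq 4\rho\tau\}$, whose volume is forced to be small by hypothesis $(ii)$. On each piece we estimate the contribution to $\|\underline\beta,\underline y\|_{L^2(N\cap B_\rho)}^2$ separately, and then choose $\delta_3$ so that the sum is bounded by $10^{-4}\|\underline\beta,\underline y\|_{L^2(N\cap B_{4\rho})}^2$.

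For the graphical region, Lemma~\ref{lem: betaAndYRegEst} (which applies because our assumption $\eta<\eta_4(\cK,\tau)$ matches its hypothesis) gives
\[
\rho^{-n-4}\int_{N\cap B_\rho\cap\{r>4\rho\tau\}}\beta^2 \;+\;\sum_{i=1}^{k}\rho^{-n-2}\int_{N\cap B_\rho\cap\{r>4\rho\tau\}}y_i^2\;\leq\; C\tau^{-2}\,\|\underline f\|_{L^2}^2(2\rho,\tau).
\]
Inserting assumption $(i)$ yields the bound $C\tau^{-2}\,(\delta_3\tau^2 10^{-3})^2\|\underline\beta,\underline y\|_{L^2(N\cap B_{4\rho})}^2 = C\delta_3^2\,\tau^2\,10^{-6}\,\|\underline\beta,\underline y\|_{L^2(N\cap B_{4\rho})}^2$, which is negligible once $\delta_3$ is chosen smaller than a constant depending only on $\cK$.

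For the non-graphical region, I would use Lemma~\ref{lem: subharmonicFunctions}, which asserts that $\beta^2$ and $y_i^2$ are weakly subharmonic on the stationary varifold $N$. The mean-value inequality for non-negative subharmonic functions on stationary varifolds (see e.g. \cite[Proposition~3.8]{CMbook}) therefore gives
\[
\sup_{N\cap B_{2\rho}}\beta^2 \;\leq\; C\rho^{-n}\int_{N\cap B_{4\rho}}\beta^2 \;\leq\; C\,4^{n+4}\,\rho^4\,\|\underline\beta\|_{L^2(N\cap B_{4\rho})}^2,
\]
and analogously $\sup_{N\cap B_{2\rho}}y_i^2\leq C\,4^{n+2}\,\rho^2\,\|\underline y\|_{L^2(N\cap B_{4\rho})}^2$. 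Combining with the volume bound in assumption $(ii)$, the non-graphical contribution to $\|\underline\beta,\underline y\|_{L^2(N\cap B_\rho)}^2$ is at most $C\,\omega_n\,4^{n+4}\,2^{-(n+8)}\,10^{-6}\,\|\underline\beta,\underline y\|_{L^2(N\cap B_{4\rho})}^2 = C\,\omega_n\,2^{n}\,10^{-6}\,\|\underline\beta,\underline y\|_{L^2(N\cap B_{4\rho})}^2$, again negligible since the precise factor $2^{-(n+8)}10^{-6}$ in $(ii)$ was chosen to absorb the $4^{n+4}$ from the mean-value inequality.

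Adding the two contributions and choosing $\delta_3=\delta_3(\cK)$ small enough that the total is at most $10^{-4}\|\underline\beta,\underline y\|_{L^2(N\cap B_{4\rho})}^2$ yields the desired bound $\|\underline\beta,\underline y\|_{L^2(N\cap B_\rho)}\leq \tfrac{1}{100}\|\underline\beta,\underline y\|_{L^2(N\cap B_{4\rho})}$. The only mild subtlety is tracking the dimensional constants so that the $n$-dependent factor from the mean-value inequality is cancelled by the $2^{-(n+8)}$ in hypothesis $(ii)$; I expect this bookkeeping, rather than any conceptual point, to be the main care required.
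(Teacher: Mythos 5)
Your proposal is correct and follows essentially the same route as the paper: split $N\cap B_\rho$ at $\{r=4\rho\tau\}$, control the graphical region via Lemma~\ref{lem: betaAndYRegEst} together with hypothesis $(i)$, and control the inner region via the mean-value inequality for the subharmonic functions $\beta^2, y_i^2$ (Lemma~\ref{lem: subharmonicFunctions}) together with the volume bound $(ii)$. The only difference is cosmetic: the paper tracks the explicit constant $2^{n+4}/\omega_n$ in the mean-value inequality so that $(ii)$ directly yields the factor $10^{-3}$, whereas you carry a generic $C$ and defer the bookkeeping, which you correctly identify as the only remaining care.
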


\begin{proof}
We prove the statement for $\beta$ only, with the statement for $y_i$ be obtained by the same argument.  By rescaling we may assume that $\rho=1$. By the mean value inequality applied to $\beta^2$, which is subharmonic by Lemma~\ref{lem: subharmonicFunctions}, we have
\[
\|\beta\|^2_{L^{\infty}(N\cap B_{1} \cap \{r<4\tau\})} \leq \frac{2^{n+4}}{\omega_n} \|\underline{\beta}\|^2_{L^{2}(N\cap B_4)}
\]
and so
\[
\begin{aligned}
\|\beta\|_{L^{2}(N\cap B_1\cap \{r< 4\tau\})} &\leq \|\beta\|_{L^{\infty}(N\cap B_{1} \cap \{r<4\tau\})}\left(\cH^n(N\cap B_{1}\cap\{r<4\tau\})\right)^{\frac{1}{2}}\\
&\leq 10^{-3} \|\underline{\beta}\|_{L^{2}(N\cap B_4)}.
\end{aligned}
\]
By our choice of $\eta_4$,  Lemma~\ref{lem: betaAndYRegEst} yields
\[
\|\beta\|_{L^{2}(N\cap B_{1}\cap \{r>4\tau\})} \leq C\tau^{-2}\|\underline{f}\|_{L^{2}}(2,\tau)
\]
for $C$ depending only on $\cK$.  Taking $\delta_3 = C^{-1}$ and applying $(i)$ yields the result.
\end{proof}

Recall the normalized volume excess defined in~\eqref{eq: volExDefn}, and note that ${\rm VolEx}_{N}(r)$ is invariant under the action of $SU(n)$.  For $r_1>r_2$ we will denote 
\[
{\rm VolEx}_{N}(r_1,r_2) = {\rm VolEx}_{N}(r_1)-{\rm VolEx}_{N}(r_2) \geq 0
\]
 The next proposition establishes a quantitative estimate for the volume excess when the tangent cone $\mfC$ has singularities in codimension at least $5$, leading eventually to Theorem~\ref{thm: main2}.  The idea of the proof is that, if $N$ is graphical over $\mfC$ on $B_2\cap\{r>2\tau\}$, then $\cH^{n}(N\cap B_{2}\cap \{r>2\tau\})$ is essentially equal to $\cH^{n}(\mfC\cap B_{2}\cap \{r>2\tau\})$, up to small error terms.  The remaining region $N\cap B_2\cap\{r<2\tau\}$ can be controlled by a comparison argument, using that $N$ is volume minimizing.
 
\begin{prop}\label{prop: decayOfVolEx}
Let $\mathcal{K} \subset \mathcal{M}$ be a compact set of smooth special Legendrians and suppose that $\mfC = \mathbb{R}^k \times \Cone(\Sigma_{\kappa})$ for some $\kappa \in \mathcal{K}$.  Suppose that $n-k \geq 5$.  If $N \subset \mathbb{C}^n$ is an exact special Lagrangian satisfying
\begin{itemize}
\item The small graph property  $P_1(\eta, 10^{-1}, \delta_1)$ at scale $\rho$ with respect to $\mfC$, 
\item The volume property $P_2(\frac{5}{6})$ at scale $\rho$ with respect to $\mfC$
\item The harmonic property $P_3(\delta_2)$ at scale $\rho$ with respect to $\mfC$,
\end{itemize}
where $\delta_1(\mathcal{K}), \delta_2(\mathcal{K}),  \eta= \eta_3(\mathcal{K}, \frac{5}{6})$ are the constants of Proposition~\ref{prop: propOfSmall}. Then there is a constant $C$ depending only on $\mathcal{K}$ such that
\[
{\rm VolEx}_{N}(\rho) \leq C\left(\|\underline{f}\|^2_{L^{2}}(2\rho, 10^{-1}) + \| \underline{\beta}, \underline{y}\|^2_{L^{2}(N\cap B_{4\rho})} + {\rm VolEx}_{N}(2\rho)^{\frac{n}{n-1}}\right).
\]
\end{prop}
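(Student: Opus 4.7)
My plan is to decompose $B_\rho$ into a graphical region and a near-singular region using Proposition~\ref{prop: propOfSmall}, and to estimate the volume excess on each piece by separate techniques. First, apply Proposition~\ref{prop: propOfSmall} to extend the local potential $F$ so that $N$ is the graph of $dF$ over $\mfC\cap B_{\rho_*}\cap\{r>\rho\tau_*\}$, where $\rho_*=(2-10^{-2})\cdot\tfrac{5}{6}\rho$ and $\tau_*^2\sim\|\underline{\beta},\underline{y}\|_{L^2(N\cap B_{4\rho})}$, together with the pointwise bounds from that proposition. Set $G=B_\rho\cap\{r>\rho\tau_*\}$ and $S=B_\rho\cap\{r\le\rho\tau_*\}$, and write
\[
\rho^n\,{\rm VolEx}_N(\rho)=\bigl[\cH^n(N\cap G)-\cH^n(\mfC\cap G)\bigr]+\bigl[\cH^n(N\cap S)-\cH^n(\mfC\cap S)\bigr].
\]

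The graphical term is handled directly. Since $N$ is the graph of $dF$ on $G$, the area formula gives $\cH^n(N\cap G)-\cH^n(\mfC\cap G)\le C\int_{\mfC\cap G}\bigl(|dF|^2+|dF|^4\bigr)$. Using the pointwise bound $|dF|\le C r\bigl((r/\rho)^{-2}\|\underline{\beta},\underline{y}\|_{L^2}+\|\underline{f}\|_{L^2}\bigr)$ from Proposition~\ref{prop: propOfSmall} and integrating in cylindrical coordinates $d\cH^n\sim dx\,r^{n-k-1}dr\,d\mathrm{vol}_\Sigma$ on $\mfC=\bR^k\times\bfC$, one obtains a bound of the form $C\bigl(\|\underline{f}\|^2_{L^2}(2\rho,10^{-1})+\|\underline{\beta},\underline{y}\|^2_{L^2}\bigr)$ uniformly in $\tau_*$. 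For the $\mfC$-contribution in $S$, $\cH^n(\mfC\cap S)\lesssim\tau_*^{n-k}\rho^n$, and the assumption $n-k\ge 5$ ensures $\tau_*^{n-k}\le C\|\underline{\beta},\underline{y}\|^2_{L^2}$ since $\tau_*^2$ itself is already controlled by this norm (so the exponent $(n-k)/2>2$ produces absorption).

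The main work is bounding $\cH^n(N\cap S)$, which requires the area-minimality of $N$. The strategy is to build an admissible competitor $\widetilde N$ agreeing with $N$ outside some $B_{\rho'}$ with $\rho'\in(\rho,2\rho)$ chosen by coarea, and inside $B_{\rho'}$ consisting of (i) the graph of a cutoff $\chi F$ which interpolates from $F$ on $\{r>2\rho\tau_*\}$ to zero on $\{r=\rho\tau_*\}$, (ii) the piece of $\mfC$ on $\{r\le\rho\tau_*\}$, and (iii) an $n$-dimensional bridge current closing up the $(n-1)$-boundary difference between $N$ and $\widetilde N$ on $\partial B_{\rho'}$. Area-minimality gives $\cH^n(N\cap B_{\rho'})\le\cH^n(\widetilde N)$, while the bridge area is controlled via the Federer--Fleming isoperimetric inequality applied to the slice $(N-\mfC)\cap\partial B_{\rho'}$. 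Choosing $\rho'$ by a pigeonhole argument on the annulus $B_{2\rho}\setminus B_\rho$ so that $\cH^{n-1}\bigl((N-\mfC)\cap\partial B_{\rho'}\bigr)\le C\rho^{n-1}\,{\rm VolEx}_N(2\rho)$, the isoperimetric step produces exactly the ${\rm VolEx}_N(2\rho)^{n/(n-1)}$ term.

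The main obstacle is the clean construction of the competitor as an integer multiplicity current with the correct boundary. The cutoff of $F$ must be coordinated across overlapping Darboux charts via Lemma~\ref{lem: diffPotEst}, and the bridge must glue the graph of $\chi F$ to the cone piece in the singular region without introducing extraneous mass. Controlling the cutoff contribution $\int|d(\chi F)|^2$ in the thin annulus $\{\rho\tau_*\le r\le 2\rho\tau_*\}$, where $|d\chi|\sim(\rho\tau_*)^{-1}$, is where the pointwise estimates on $F$ from Proposition~\ref{prop: propOfSmall} play a decisive role, and where the codimension condition $n-k\ge 5$ guarantees that all such boundary contributions absorb into the stated bound without requiring additional smallness beyond what the harmonic property $P_3(\delta_2)$ already provides.
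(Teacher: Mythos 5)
Your overall architecture coincides with the paper's: graphicality down to $r\sim\rho\tau_*$ with $\tau_*^2\sim\|\underline{\beta},\underline{y}\|_{L^2(N\cap B_{4\rho})}$ from Proposition~\ref{prop: propOfSmall}, a quadratic area comparison on the graphical region using $n-k\geq 5$ to integrate $r^{-4}\cdot r^{n-k-1}dr$ near $r=0$, a competitor agreeing with $N$ outside the thin region and coned off onto $\mfC$ inside it, and a coarea--slicing--Federer--Fleming step producing the ${\rm VolEx}_{N}(2\rho)^{\frac{n}{n-1}}$ term. The paper's competitor is the radially rescaled normal graph $\frac{10r}{\tau}V(z,\frac{\tau z'}{10|z'|})$ rather than the graph of a cutoff potential $\chi F$, but that difference is cosmetic.

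The one step that fails as written is the pigeonhole claim $\cH^{n-1}\bigl((N-\mfC)\cap\partial B_{\rho'}\bigr)\leq C\rho^{n-1}\,{\rm VolEx}_{N}(2\rho)$. The coarea/pigeonhole argument can only bound the slice mass by the mass of the sliced current in the annulus, and ${\rm Mass}_{B_{2\rho}\setminus B_{\rho}}(N-\mfC)={\rm Mass}(N)+{\rm Mass}(\mfC)$ there: the two currents are supported on essentially disjoint sets, so there is no cancellation of mass, and this quantity is of order $\rho^{n}$, not $\rho^{n}{\rm VolEx}_{N}(2\rho)$. What must be sliced instead is $N-\widetilde{N}$ (in the paper's notation, $N-A$), which is supported in $\{r\lesssim\rho\tau_*\}$; its mass in the annulus is at most $\cH^{n}(N\cap\{r\lesssim\rho\tau_*\})+C\tau_*^{n-k}\rho^{n}$, and the first term is converted into $\rho^n\bigl({\rm VolEx}_{N}(2\rho)+C(\|\underline{f}\|^2_{L^2}+\|\underline{\beta},\underline{y}\|^2_{L^2})\bigr)$ precisely by subtracting the graphical volume comparison you already established on $G$ --- this is the content of~\eqref{eq: N-AMass}. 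Consequently the slice bound, and hence the isoperimetric filling, necessarily carries the additional $\|\underline{f}\|^2+\|\underline{\beta},\underline{y}\|^2$ errors; these are harmless in the final estimate since $(a+b)^{\frac{n}{n-1}}\leq C\bigl(a^{\frac{n}{n-1}}+b^{\frac{n}{n-1}}\bigr)$ and the error terms are at most $1$, but the bound by ${\rm VolEx}$ alone is not available, and without routing the slice through the competitor the $\frac{n}{n-1}$ power cannot be extracted at all.
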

\begin{proof}
By rescaling, it suffices to assume $\rho=1$. To ease notation, let us denote
\[
M_1:= \|\underline{f}\|_{L^{2}}(2, 10^{-1}) \qquad M_2 := \| \underline{\beta}, \underline{y}\|_{L^{2}(N\cap B_{4})}.
\]
Since the assumptions of Proposition~\ref{prop: propOfSmall} are in effect for $\gamma = \frac{5}{6}$ we can assume that $N$ is the graph of a potential on $\mfC\cap B_{\frac{3}{2}}\cap \{r> \frac{\tau_0}{10}\}$ for
\begin{equation}\label{eq: tauVolEst}
C^{-1}M_2 \leq \tau_0^2 \leq CM_2.
\end{equation}
We first analyze the portion of $N$ which is obtained from the graph of $df$.  From Proposition~\ref{prop: propOfSmall} we get estimate
\begin{equation}\label{eq: locVolDecayRegEst}
r^{-1}|df| +|D^2f| \leq C\left(r^{-2}\| \underline{\beta}, \underline{y}\|_{L^{2}(N\cap B_{4})} + \|\underline{f}\|_{L^{2}}(2, 10^{-1})\right)
\end{equation}
on $\mfC \cap B_{\frac{3}{2}}\cap\{r>\tau_0\}$, and hence, from the bound for $\tau_0$ we have
\[
\begin{aligned}
&\sup_{\mfC\cap B_{\frac{3}{2}} \cap \{r>1\}}|df| < C(M_1+M_2)\\
&\sup_{\mfC\cap B_{\frac{3}{2}} \cap \{ r>\tau_0 \}}|df|< C\tau_0
\end{aligned}
\]
Since the vectors $\frac{\del}{\del R}, \frac{\del}{\del r}$ are tangent to $\mfC$ it follows that the graph of $df$ over $\mfC \cap B_{\frac{3}{2}}\cap\{r>\tau_0\}$ necessarily contains all points in $N\cap B_{\frac{3}{2}-\epsilon} \cap \{r>\tau_0+C\tau^2_0\}$ where
\[
\epsilon < C(M_1^2+M_2^2)
\]
and $C$ depends only on $\cK$.  Set $\tau = \tau_0+C\tau^2_0$ for simplicity.  For the remainder of the proof $C$ will denote a constant which can increase from line to line, but depends only on $\cK$.  

Since $\mfC= \mathbb{R}^{k}\times \bfC$ where $\bfC$ is a cone with $\dim \bfC=n-k$ we have
\begin{equation}\label{eq: coneVolEst}
\mathcal{H}^{n}(\mfC \cap B_2 \cap \{r \leq 10\tau\}) \leq C\tau^{n-k}.
\end{equation}
Using~\eqref{eq: locVolDecayRegEst} we see that, for any $s\leq \frac{3}{2}-\epsilon$ we have
\begin{equation}\label{eq: stupidVolCor}
\begin{aligned}
&\bigg|\mathcal{H}^{n}(N\cap B_s\cap \{r>\frac{\tau}{10}\}) - \mathcal{H}^{n}(\mfC\cap B_s\cap \{r>\frac{\tau}{10}\})\bigg|\\
& \quad\leq C\left(\epsilon + \int_{\mfC\cap B_s \cap \{r>\frac{\tau}{10}\}} M_1^2+r^{-4}M_{2}^2 + \tau^{n-k}\right).
\end{aligned}
\end{equation}
Since $r^{m-5}dr$ is integrable near $0$ for $m \geq 5$, we get a bound
\begin{equation}\label{eq: regVolEst}
\bigg|\mathcal{H}^{n}(N\cap B_s\cap \{r>\frac{\tau}{10}\}) - \mathcal{H}^{n}(\mfC\cap B_s\cap \{r>\frac{\tau}{10}\})\bigg|\leq C (M_1^2+M_{2}^2),
\end{equation}
for any $s\leq \frac{3}{2}-\epsilon$.  We now bound the volume of $N$ from above.  Since $N$ is special Lagrangian, and hence volume minimizing, it suffices to construct a competitor surface.

Write $N$ as the graph over $\mfC\cap B_2 \cap \{r>\frac{\tau}{10}\}$ of a normal vector field $V(z,z')$. Define 
\[
A = \begin{cases} \text{ graph}^{\perp}\left( V(z,z')\right) & \text{ if }(z,z') \in \mfC\cap B_2 \cap \{r >\frac{\tau}{10}\}\\
\text{ graph}^{\perp}\frac{10r}{\tau} V(z,  \frac{\tau z'}{|z'|10}) & \text{ if } (z,z') \in \mfC\cap B_2 \cap \{r \leq \frac{\tau}{10}\}
\end{cases}
\]
where ${\rm graph}^{\perp}$ denotes the normal graph.  Note that $A$ can be smoothed near $r=\frac{\tau}{10}$ if desired.  By construction, the current $N-A$ is supported in $B_2\cap \{r\leq \frac{\tau}{10}\}$ and for any $s \leq \frac{3}{2}-\epsilon$;
\begin{equation}\label{eq: N-AMass}
\begin{aligned}
{\rm Mass}_{B_s}(N-A) &\leq C\tau^{n-k} + \mathcal{H}^n(N\cap B_s\cap \{r < \frac{\tau}{10}\})\\
&\leq C(M_1^2+M_2^2)+ \mathcal{H}^n(N\cap B_s)- \mathcal{H}^{n}(\mfC \cap B_s) 
\end{aligned}
\end{equation}
where we used~\eqref{eq: tauVolEst},~\eqref{eq: coneVolEst} and~\eqref{eq: regVolEst} and $n-k\geq 5$.

Applying the co-area formula \cite[Lemma 28.1]{SimonBook}, together with the slicing theory for integer multiplicity currents \cite[Lemma 28.5]{SimonBook} we can find an $s_*\in [\frac{5}{4},1]$ and $n-1$ dimensional integer multiplicity current $T$ such that $\del T=0$ such that
\[
T= \del \left((N-A)\big|_{B_{s_*}}\right)
\]
and
\[
{\rm Mass}(T) \leq C\left({\rm Mass}_{B_{\frac{5}{4}}}(N-A) - {\rm Mass}_{B_1}(N-A) \right)
\]
By the isoperimetric inequality of Federer-Fleming \cite{FF} (see also \cite[Theorem 30.1]{SimonBook}) there exists a current $P$, with $\del P= T$ and such that
\[
{\rm Mass}(P) \leq c(n) ({\rm Mass}(T))^{\frac{n}{n-1}}
\]
for a dimensional constant $c(n)$.  Now we have
\[
\del\left(N\big|_{B_{s_*}}\right) = \del \left(A\big|_{B_{s_*}} + P\right).
\]
Thus, by the volume minimization property of special Lagrangians
\[
\begin{aligned}
\mathcal{H}^n(N\cap B_{s_*})&\leq {\rm Mass}_{B_{s_*}}(A) + {\rm Mass}(P)\\
& \leq C\tau^{n-k} + \mathcal{H}^n(N\cap B_{s_*} \cap \{r>\frac{\tau}{10}\})\\
&\quad + c(n)\left({\rm Mass}_{B_{\frac{5}{4}}}(N-A) - {\rm Mass}_{B_1}(N-A) \right)^{\frac{n}{n-1}}\\
&\leq C\tau^{n-k} + \mathcal{H}^n(\mfC\cap B_{s_*}) + C(M_1^2+M_2^2)\\
&\quad+C\left(M_1^2+M_2^2+ \mathcal{H}^n(N\cap B_{\frac{5}{4}})- \mathcal{H}^{n}(\mfC \cap B_{\frac{5}{4}}) \right)^{\frac{n}{n-1}}
 \end{aligned}
 \]
 where we used~\eqref{eq: regVolEst} and~\eqref{eq: N-AMass}.  Thus, from~\eqref{eq: tauVolEst} and~\eqref{eq: coneVolEst}, we obtain
 \[
 {\rm VolEx}(s_*) \leq C\left(M_1^2+M_2^2 + {\rm VolEx}\left(\frac{5}{4}\right)^{\frac{n}{n-1}}\right)
 \]
 for a constant $C$ depending only on $\cK$. Volume monotonicity, together with $1 \leq s_* \leq \frac{5}{4} \leq 2$ yields the desired result.
\end{proof}

Note that the proof of Proposition~\ref{prop: decayOfVolEx} yields the following simple corollary

\begin{cor}\label{cor: easyVolEst}
Let $\mathcal{K} \subset \mathcal{M}$ be a compact set of smooth special Legendrians and suppose that $\mfC = \mathbb{R}^k \times \Cone(\Sigma_{\kappa})$ for some $\kappa \in \mathcal{K}$.  If $N \subset \mathbb{C}^n$ is an exact special Lagrangian satisfying
\begin{itemize}
\item The small graph property  $P_1(\eta, 10^{-1}, \delta_1)$ at scale $\rho$ with respect to $\mfC$, 
\item The volume property $P_2(\frac{5}{6})$ at scale $\rho$ with respect to $\mfC$
\item The harmonic property $P_3(\delta_2)$ at scale $\rho$ with respect to $\mfC$,
\end{itemize}
where $\delta_1(\mathcal{K}), \delta_2(\mathcal{K}),  \eta= \eta_3(\mathcal{K}, \frac{5}{6})$ are the constants of Proposition~\ref{prop: propOfSmall}. Then there is a constant $C$ depending only on $\mathcal{K}$ such that for any $\tau$ satisfying  $\tau^2 > C\|\underline{\beta},\underline{y}\|_{L^{2}(N\cap B_{4\rho})}$ we have
\[
\begin{aligned}
&\bigg|\mathcal{H}^{n}(N\cap B_\rho\cap \{r>\tau\}) - \mathcal{H}^{n}(\mfC\cap B_s\cap \{r>\tau\})\bigg|\\
& \leq C\left(\|\underline{f}\|^2_{L^2}(2\rho, 10^{-1}) +\tau^{n-k} + \left(\int_{\tau}^{1}r^{n-k-5}dr\right)\|\underline{\beta},\underline{y}\|^2_{L^{2}(N\cap B_{4\rho})}\right)
\end{aligned}
\]
\end{cor}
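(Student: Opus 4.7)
The plan is to extract the graphical portion of the proof of Proposition~\ref{prop: decayOfVolEx} without invoking the comparison surface argument, keeping the weighted integral $\int r^{n-k-5}\,dr$ in its raw form rather than bounding it by a constant (which is the step where the hypothesis $n-k\geq 5$ enters in the original proof).

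After rescaling to $\rho=1$ and setting $M_1 := \|\underline{f}\|_{L^2}(2, 10^{-1})$ and $M_2 := \|\underline{\beta},\underline{y}\|_{L^2(N\cap B_4)}$, I would invoke Proposition~\ref{prop: propOfSmall} with $\gamma=5/6$ to obtain an extension $F$ of the local potential over $\mfC\cap B_{\rho_*}\cap \{r>\tau_0\}$ with $\tau_0^2 \leq CM_2$, $\rho_*>1$, together with the $C^1$, $C^2$ bounds
\[
|dF|\leq C(r^{-1}M_2+rM_1),\qquad |D^2F|\leq C(r^{-2}M_2+M_1).
\]
Provided the constant in the hypothesis $\tau^2>C\|\underline{\beta},\underline{y}\|_{L^2(N\cap B_{4\rho})}$ is chosen large, we have $\tau>10\tau_0$ and $|dF|\ll \tau$ throughout $\mfC\cap B_1\cap\{r>\tau\}$. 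Consequently this region lies safely inside the graphical region for $N$, and the image of the graph of $dF$ differs from $\mfC$ by a displacement which is small compared to $r$ there.

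Next I would apply the area formula in the adapted Darboux charts of Section~\ref{sec: Darboux}. Writing $\Omega_\tau\subset\mfC$ for the set of base points whose image on the graph of $dF$ lies in $B_1\cap\{r>\tau\}$, the symmetric difference between $\Omega_\tau$ and $\mfC\cap B_1\cap\{r>\tau\}$ sits in a collar of width $O(|dF|)$ around the boundary, and the product structure $\mfC=\mathbb{R}^k\times\Cone(\Sigma)$ bounds the $\mathcal{H}^n$-measure of this collar by $C\tau^{n-k}$. The standard expansion of the area functional on a Lagrangian graph in Darboux coordinates, to leading order in $(dF,D^2F)$, then yields
\[
\bigl|\mathcal{H}^n(N\cap B_1\cap\{r>\tau\})-\mathcal{H}^n(\mfC\cap B_1\cap\{r>\tau\})\bigr| \leq C\int_{\mfC\cap B_1\cap \{r>\tau\}}\!\!\bigl(|D^2F|^2+r^{-2}|dF|^2\bigr)+C\tau^{n-k},
\]
which is exactly the estimate that appears implicitly as~\eqref{eq: stupidVolCor} in the proof of Proposition~\ref{prop: decayOfVolEx}.

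To finish, I would plug in the pointwise bounds on $|dF|$ and $|D^2F|$ to dominate the integrand by $C(M_1^2+r^{-4}M_2^2)$ and then use Fubini together with the cone decomposition of $\mfC$ to reduce to a one-dimensional radial integral. The $M_1^2$ term contributes at most $CM_1^2$ since the domain has bounded $\mathcal{H}^n$-measure, while the $r^{-4}M_2^2$ term contributes $CM_2^2\int_\tau^1 r^{n-k-5}\,dr$ after integrating out the $\mathbb{R}^k$ and $\Sigma$ factors. Combining with the $C\tau^{n-k}$ collar error produces the claimed bound. The main technical point, and the only novelty relative to Proposition~\ref{prop: decayOfVolEx}, is that we carry the weighted radial integral as is and never bound it by a constant; crucially we only need the upper-bound direction of the area comparison here, so the comparison surface $A$ and the Federer--Fleming isoperimetric step from the original proof are not invoked, and the hypothesis $n-k\geq 5$ can be dropped.
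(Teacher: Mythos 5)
Your proposal is correct and is essentially the paper's own proof: the paper disposes of this corollary in one line by observing that it follows from the arguments in the proof of Proposition~\ref{prop: decayOfVolEx} leading to~\eqref{eq: stupidVolCor}, which is exactly what you do --- run the graphical area comparison, keep the radial integral $\int_\tau^1 r^{n-k-5}\,dr$ unevaluated, and drop the comparison-surface/isoperimetric step so that $n-k\geq 5$ is never used. The one imprecision is your claim that the boundary collar has width $O(|dF|)$ and measure $C\tau^{n-k}$: near the outer boundary $\{R=1\}$ that would contribute an error linear in $M_1+M_2$, which is not controlled by the right-hand side, and the correct accounting (made explicit in the proof of Proposition~\ref{prop: decayOfVolEx}) uses that $\frac{\del}{\del R}$ and $\frac{\del}{\del r}$ are tangent to $\mfC$, so the radial displacement of the normal graph is quadratic in $|dF|$ and the outer collar contributes only $O(M_1^2+M_2^2)$, which is absorbed.
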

\begin{proof}
This follows from the arguments in the proof of Proposition~\ref{prop: decayOfVolEx} leading to~\eqref{eq: stupidVolCor}.
\end{proof}

We now come to the main decay result.  Roughly speaking, this result says that if at some scale the special Lagrangian $N$ is well modeled by the linearized special Lagrangian equation on $\mfC$, then $N$ gets significantly closer to some possibly different special Lagrangian cylinder $\mfC'$ after passing to a smaller scale.  The cylinder $\mfC'$ is obtained from $\mfC$ by deformation using crucially the assumption of integrability.  

\begin{prop}\label{prop: fDecay}
Fix $\rho>0$ and $\tau, \eta \in (0,\frac{1}{10})$, constants $C_1, C_2$ and a compact set $\mathcal{K}\subset \mathcal{M}$.  Let $\delta_2(\cK)$ be the constant defined in Proposition~\ref{prop: propOfSmall}.  There exists $\theta(\mathcal{K}) \in (0,\frac{1}{2})$ and constants $\delta_4 =\delta_4(\mathcal{K}, \tau, \eta, C_1, C_2)$, $\eta_5(\mathcal{K})$, $b_0=b_0(\mathcal{K},\tau) \in \mathbb{Z}_{>0}$ with the following effect:  if $N$ is an exact special Lagrangian with harmonic function $\beta$ normalized to satisfy $Av_{N}(\beta, 4\rho) =0$ and with respect to this normalization $N$ satisfies \begin{itemize}
\item[$(i)$] there is some $\kappa \in \cK$ such that, with respect to $\mfC:= \mathbb{R}^k\times \Cone(\Sigma_{\kappa})$, and at scale $\rho$, $N$ has:
\begin{itemize}
\item the small graph property $P_1(\eta_5, \tau, \delta_{4})$, 
\item the volume property $P_2(\frac{5}{6})$ 
\item the harmonic property $P_{3}(\delta_2)$ 
\end{itemize} 
\item[$(ii)$] The harmonic functions $\beta, y_i$ satisfy 
\[
\|\underline{\beta}, \underline{y}\|_{L^{2}(N\cap B_{4\rho})} \leq C_1\tau^{-2}\|\underline{f}\|_{L^{2}}(2\rho, \tau)
\]
\item[$(iii)$] For some $b_0\leq b \leq +\infty $ the volume excess satisfies the inequality
\[
{\rm VolEx}_{N}(2\rho, 2^{1-b}\rho) \leq C_2\|\underline{f}\|_{L^{2}}(2\rho, \tau).
\]
 \end{itemize}
 Then, there are constants $C(\cK, \tau) >0$ and $c(\cK, \tau, b)\leq 1$ with $c(\cK, \tau,\infty)=0$, such that the following conclusions hold: there are elements $a \in \mathfrak{su}(n)$, and $q \in \mfH_{2m}(\Sigma_{\kappa}) $ satisfying the bounds
 \[
 |a|_{\mathfrak{su}(n)}+\|q\|_{L^{2}(\Sigma_{\kappa})}<C\|\underline{f}\|_{L^2}(2\rho, \tau)
 \]
 such that if we set $\widehat{N} := \exp(-a)N$ and $\widehat{\mfC} = \mathbb{R}^k\times \Cone(\exp^\cM_{\Sigma_{\kappa}}q)$, then
 \begin{itemize}
 \item[${\bf (c_1)}$] we have the decay estimates
 \[
 \begin{aligned}
 \|\underline{y}\|^2_{L^{2}(\widehat{N}\cap B_{2\theta\rho})}& \leq c(b) {\rm VolEx}_{N}(2\rho, 2^{1-b}\rho) +  \frac{1}{10} \|\underline{y}\|^2_{L^{2}(\widehat{N}\cap B_{4\rho})}\\
 \|\underline{\beta-Av_{\widehat{N}}(\beta, 2\theta\rho)}\|^2_{L^{2}(\widehat{N}\cap B_{2\theta\rho})} &\leq c(b) {\rm VolEx}_{N}(2\rho, 2^{1-b}\rho) +  \frac{1}{10} \|\underline{\beta}\|^2_{L^{2}(\widehat{N}\cap B_{4\rho})}
 \end{aligned}
\]
\item[${\bf (c_2)}$] $\widehat{N}$ is the graph associated to a potential function $\hat{f}$ defined in an open neighborhood of $\widehat{\mfC} \cap B_{\theta\rho} \cap \{r> \theta\rho\tau\}$ 
\[
\|\underline{\hat{f}-Av_{\widehat{N}}(\beta, 2\theta\rho)}\|^2_{L^2}(\rho\theta, \tau ) \leq c(b){\rm VolEx}_{N}(2\rho, 2^{1-b}\rho) +\frac{1}{10}\|\underline{f}\|^2_{L^2}(2\rho, \tau). 
\]
and 
\[
\sup_{\widehat{\mfC} \cap B_{2\theta\rho} \cap \{r> 2\theta\rho\tau\}} r^{-1}|d\hat{f}| + |D^2\hat{f}| \leq \eta.
\]
\end{itemize}
 \end{prop}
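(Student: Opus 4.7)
I would prove Proposition~\ref{prop: fDecay} by contradiction via a Simon-type blow-up combined with a Fourier/spectral expansion. By rescaling, take $\rho = 1$. If the conclusion fails, extract sequences $N_i$, cylinders $\mfC_i = \bR^k \times \Cone(\Sigma_{\kappa_i})$ with $\kappa_i \to \kappa_\infty \in \cK$, integers $b_i \to \infty$, and parameters $\delta_{4,i}, \eta_{5,i} \to 0$, such that hypotheses $(i)$–$(iii)$ hold but no admissible $a_i, q_i$ achieve the conclusion. Set $\epsilon_i := \|\underline{f_i}\|_{L^{2}}(2,\tau)$; by hypothesis $(ii)$, $\|\underline{\beta_i}, \underline{y_i}\|_{L^{2}(N_i \cap B_4)} \leq C_1 \tau^{-2} \epsilon_i$, and I normalize $h_i := f_i/\epsilon_i$, $B_i := \beta_i/\epsilon_i$, $Y_i := y_i/\epsilon_i$. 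Proposition~\ref{prop: propOfSmall} extends $h_i$ over $\mfC_i \cap B_{(2-10^{-2})(5/6)} \cap \{r > \tau_{*,i}\}$ with $\tau_{*,i} \to 0$, and Lemma~\ref{lem: ellEst} supplies uniform $C^2$-bounds on $\{r > 2\tau\}$. By Lemma~\ref{lem: JoyceLemma} the $h_i$ satisfy a uniformly elliptic PDE that converges to $\Delta_{\mfC_i} h_i = 0$ as $\epsilon_i \to 0$; elliptic regularity and compactness of $\cK$ then yield a subsequential limit $h$ that is harmonic on $\mfC_\infty \cap B_2 \setminus (\bR^k \times \{0\})$, with $\|\underline{h}\|_{L^{2}}(2,\tau) = 1$.

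\textbf{Spectral decomposition and integrability.} Since $\mfC_\infty$ has smooth compact link, Simon's real analyticity of Fourier series (cf.\ Appendix~1 of \cite{SimonUn}) decomposes
\begin{equation*}
h = h_{\le 2} + h_{>2},
\end{equation*}
where $h_{\le 2}$ collects the separation-of-variables pieces of radial growth order $\le 2$ and $h_{>2}$ the strictly higher modes. A uniform spectral gap $\sigma = \sigma(\cK) > 0$ above degree $2$ exists because the Laplacian on the link has discrete spectrum varying continuously on $\cK$; hence
\begin{equation*}
\|\underline{h_{>2}}\|_{L^{2}}(2\theta,\tau) \le C \theta^{\sigma}\|\underline{h_{>2}}\|_{L^{2}}(2,\tau).
\end{equation*}
By Lemma~\ref{lem: rigid} and the integrability hypothesis (Definition~\ref{defn: strongInt}), the degree-$2$ component of $h$ decomposes into a pullback from $\mfH_{2m}(\Sigma_\infty) = T_{\Sigma_\infty}\cM$ and a piece generated by some element of $\mathfrak{su}(n)$. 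Accordingly I choose $a_i \in \mathfrak{su}(n)$ and $q_i \in \mfH_{2m}(\Sigma_{\kappa_i})$ of size $O(\epsilon_i)$ whose linearizations match the degree-$2$ part of $h_i$; by Definition~\ref{defn: strongInt}(ii) the exponential $\exp^{\cM}_{\Sigma_{\kappa_i}} q_i$ is well-defined for $|q_i|$ small. Setting $\widehat{N}_i := \exp(-a_i) N_i$ and $\widehat{\mfC}_i := \bR^k \times \Cone(\exp^{\cM}_{\Sigma_{\kappa_i}} q_i)$, the rescaled potential $\hat{h}_i$ of $\widehat{N}_i$ over $\widehat{\mfC}_i$ converges to $h_{\le 1} + h_{>2}$ modulo errors quadratic in $\epsilon_i$. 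The constant part of $h_{\le 1}$ vanishes by the normalization $Av_{N_i}(\beta_i,4) = 0$ together with the identity~\eqref{eq: betaVsPot}.

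\textbf{Decay and main obstacle.} By the spectral gap the $h_{>2}$-piece decays at scale $\theta$ as $\theta^{\sigma}$, and this propagates to the analogous decay for $\underline{y}$ and $\underline{\beta - Av_{\widehat{N}}(\beta, 2\theta)}$ via $y_j = -\partial \hat{f}/\partial x_j$ and~\eqref{eq: betaVsPot}; choose $\theta$ so $C\theta^\sigma \le 1/20$. The linear component $h_1$ cannot be removed by $\mathfrak{su}(n)$ or by link deformations (it corresponds to translations of the cone, which lie outside the allowed deformation space), and must instead be controlled through hypothesis $(iii)$. Concretely, following the co-area/isoperimetric comparison argument of Proposition~\ref{prop: decayOfVolEx} together with Corollary~\ref{cor: easyVolEst}, the $L^{2}$ mass of $h_1$ on the relevant annulus is bounded by ${\rm VolEx}_N(2, 2^{1-b})$ up to a factor $c(\cK,\tau,b)$, and one has $c(\cK,\tau,\infty) = 0$ because as $b \to \infty$ the annular excess captures the full deficit supporting these modes. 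Combining the two estimates with the bounds on $|a_i|$ and $\|q_i\|$ yields $(c_1)$ and $(c_2)$ after choosing $\delta_4, \eta_5, b_0$ appropriately, contradicting the assumed failure. The main obstacle is precisely the handling of the linear sector: unlike the quadratic part, which integrability lets us absorb into the cylinder deformation $(a,q)$, the linear modes survive and must be controlled indirectly through the volume-excess input $(iii)$; carefully matching the scaling of this error with the spectral-gap decay (and tracking the precise dependence of $c(\cK, \tau, b)$) is the delicate technical step.
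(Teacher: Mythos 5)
Your proposal follows essentially the same blow-up-and-spectral-decomposition route as the paper: normalize by $\|\underline{f}\|_{L^2}(2\rho,\tau)$, pass to a harmonic limit on $\mfC$, expand via Simon's real analyticity of Fourier series, kill the constant mode by the normalization of $\beta$, absorb the degree-$2$ modes into $(a,q)$ using integrability and Lemma~\ref{lem: rigid}, and obtain decay of the super-quadratic part from the spectral gap. The one point to correct is your justification for the sub-quadratic sector: it does not come from the comparison-surface arguments of Proposition~\ref{prop: decayOfVolEx} or Corollary~\ref{cor: easyVolEst}, but from the monotonicity identity ${\rm VolEx}_N(2\rho,2^{1-b}\rho)=\int_{N\cap\{2^{1-b}\rho<R<2\rho\}}|d\beta|^2R^{-(n+2)}$ combined with the fact that $\beta=-\tfrac{1}{2}R^3\frac{\partial}{\partial R}\left(\frac{f}{R^2}\right)$ weights a homogeneous mode of degree $d$ by $(d-2)$, so each mode with $0<d<2$ contributes to the annular integral with a factor growing like $2^{2b(2-d)}$, which pins its coefficient down by $c(b)\,{\rm VolEx}_N(2\rho,2^{1-b}\rho)^{1/2}$ with $c(b)\to 0$; note also that this sector contains all (generally non-integer) degrees in $(0,2)$, not only the genuinely linear modes you single out.
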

\begin{proof}
The proof is by contradiction and compactness.  By rescaling we may assume $\rho=1$.  Fix $\eta, \tau \in (0,\frac{1}{10})$, $b_0\in \mathbb{Z}_{>0}$ to be determined, and let $\eta_5 = \frac{1}{10}\eta_3(\cK, \frac{5}{6})$, where $\eta_3(\cK, \frac{5}{6})$ is the constant appearing in Proposition~\ref{prop: propOfSmall}.  Suppose $N_\nu$ is a sequence of exact special Lagrangians satisfying $(i)-(iii)$, $P_1(\eta_5, \tau, \delta_4(N_\nu))$, $P_2(\frac{5}{6})$, $P_3(\delta_2)$ with respect to $\mfC_{\kappa_{\nu}} = \mathbb{R}^k \times \Cone(\Sigma_{\kappa_{\nu}})$ for $\kappa_{\nu}\in \mathcal{K}$, $\eta \leq \eta_5$, and $\delta_{\nu}= \delta_4(N_\nu)\rightarrow 0$.  Without loss of generality we may assume that $\mfC_{k_{\nu}}$ converge to $\mfC$ as closed integral currents, and smoothly on compact sets away from $\mathbb{R}^{k}\times \{0\}$.  Proposition~\ref{prop: propOfSmall} implies that the potential $f_\nu$  of $N_{\nu}$ is defined on $\mfC\cap B_{\frac{3}{2}} \cap \{r > \tau_\nu\}$ with $\tau_\nu\rightarrow 0$ and satisfies the estimates 
 \begin{equation}\label{eq: ellipticEstimates}
 \begin{aligned}
 |f_{\nu}| &\leq C\tau^{-2}\|\underline{f_{\nu}}\|_{L^{2}}(2,\tau) (1 +r^{2})\\
r^{-1}|df_\nu| + |D^2f_\nu| &\leq C\tau^{-2}\|\underline{f_{\nu}}\|_{L^{2}}(2,\tau) (1 +r^{-2}) \\
 \text{ for all } (z,z') &\in \mfC\cap B_{\frac{3}{2}}\cap \{r>\tau_\nu\}.
 \end{aligned}
 \end{equation}
for $C= C(\cK)$.  Dividing through by $\|\underline{f_{\nu}}\|_{L^{2}}(2,\tau)$ we obtain a sequence of locally defined potentials $\tilde{f}_\nu$ converging locally smoothly (along a subsequence) to a well-defined function $\tilde{f}$ by Lemma~\ref{lem: diffPotEst}.  Furthermore, $\tilde{f}$ satisfies $\Delta \tilde{f}=0$.  From~\eqref{eq: ellipticEstimates}, the locally defined functions $\tilde{f}_\nu$ satisfy
 \[
 |d\tilde{f}_\nu| \leq Cr^{-1}, \quad |\tilde{f}_\nu| \leq C
 \]
 for a constant $C$ depending only on $\cK, \tau$. Since $r^{-2}$ is integrable (as $\mfC = \mathbb{R}^k \times \bfC$ with $\dim \bfC >2$ by assumption) this yields $\tilde{f} \in W_{loc}^{1,2} (B_{\frac{3}{2}}\cap (\mfC\setminus \mfC_{sing}))$ and we have the estimate
 \begin{equation}\label{eq: W12bndLimit}
 \int_{\mfC \cap B_\frac{3}{2}} |d\tilde{f}|^2 + r^{-2}|\tilde{f}|^2 <C
 \end{equation}
  for $C$ depending only on $\cK, \tau$. Let $\bfC= \Cone(\Sigma)$ and denote by
 \[
 \lambda_0 = 0 < \lambda_1 < \lambda_2 < \cdots < \lambda_j < \cdots
 \]
 the distinct eigenvalues of $\Delta_{\Sigma}$, and, for each $j$ let 
 \[
 \mathfrak{H}_{\lambda_j}(\Sigma) := \{ \phi:\Sigma \rightarrow \mathbb{R} : \Delta_{\Sigma}\phi+\lambda_j\phi =0\}.
 \]
 Fix $\phi^j \in \mathfrak{H}_{\lambda_j}(\Sigma)$ and define
 \[
 v^j(x,r) := \langle \phi^j, \tilde{f}(x,r)\rangle_{L^{2}(\Sigma)}.
 \]
 Then since $\Delta_{\mfC} \tilde{f}=0$ we obtain
 \[
 \Delta_{\mathbb{R}^k} v^j + \frac{\del}{\del r^2}v^j + \frac{n-k-1}{r}\frac{\del}{\del r} v^{j} - \lambda_jr^{-2}v^j =0.
 \]
 Set $\tilde{v}^j = r^{\alpha_j}v^{j}$ for $\alpha_j$ to be determined, and let us momentarily suppress the dependence on $j$ to ease the notation.  Let $\alpha$ be the negative root of $\alpha^2-(n-k-2)\alpha-\lambda=0$ and choose $\chi=\sqrt{(n-k-2)^2+ 4\lambda}>0$. 

Then by direct calculation we have
\[
\Delta_{\mathbb{R}^k} \tilde{v} +  \frac{1}{r^{1+\chi}} \frac{\del}{\del r}\left( r^{1+\chi}\frac{\del \tilde{v}}{\del r}\right)=0.
\]
From the bound~\eqref{eq: W12bndLimit} we get, for any $\rho < \frac{3}{2}$
\[
\int_{\{r^2+|z|^2< \rho^2\}}\int_{r<\rho} (|\nabla \tilde{v}^j| + r^{-2}\tilde{v}^j) r^{1+\chi_j} drdz <+\infty.
\]
By Simon's real analyticity of Fourier series \cite[Appendix 1]{SimonUn} we deduce that $\tilde{v}^{j}$ is real analytic in the variables $r^2, x$.  That is 
\[
\tilde{v}^{j} = \sum_{p=0}^{\infty} \sum_{\ell \in \mathbb{Z}_{\geq 0}^{k}} a^j_{p,\ell} r^{2p}x^{\ell}, \quad \text{ for } \quad r^2+|x|^2 \leq (\theta_0\rho)^2
\]
where $\ell = (\ell_1, \ldots, \ell_k) \in \mathbb{Z}_{\geq 0}^{k}$ is a multi-index, $|\ell| = \ell_1 + \cdots + \ell_k$,  $\theta_0 \in (0,1)$ is a fixed constant depending only on $\chi_j$, and
\[
|a^j_{p,\ell}| \leq (\theta_0\rho)^{-2p-|\ell|} \left(\rho^{-\chi_j-3}\int_{r^2+|x|^2 <\rho^2} (\tilde{v}^{j})^2 \right)^{\frac{1}{2}}.
\]
We shall write $|\ell| = \ell_1 + \cdots + \ell_k$.    Substituting this expression into the formula for $\tilde{f}$ we have
\begin{equation}\label{eq: expOfFinfty}
\tilde{f} = \sum_{\{(p, \ell, \alpha_j): 2p+|\ell|-\alpha_j \leq 2\}}\sum_{i=1}^{N_j}a_{p,\ell,i}^jr^{2p-\alpha_j}x^{\ell}\phi^j_{i} + \tilde{f}_{>2}
\end{equation}
where $\phi^j_{1},\ldots, \phi^j_{N_{j}}$ is an orthonormal basis of $\mfH_{\lambda_j}(\Sigma)$ with $\lambda_j$ corresponding to $\alpha_j$ as above and $\tilde{f}_{>2}$ is harmonic with strictly faster than quadratic growth.  Terms with $2p + |\ell| -\alpha_j <0$ are ruled out since $p \geq 0$, $|\ell| \geq 0$, and $\alpha_j \leq 0$.  

Next we shall make use of the assumption on the volume excess to control the terms of total degree $2p + |\ell| -\alpha_j <2$.  If such a term occurs then we must have $p=0$ and $|\ell|=0,1$.  From the volume monotonicity formula we have
\[
{\rm VolEx}_{N_{\nu}}(s) = \int_{N_{\nu}\cap B_{s}} \frac{|x^{\perp}|^2}{R^{n+2}}.
\]
On the other hand, for any exact Lagrangian $N$ we have $\nabla^{N}\beta= J(x^{\perp})$ and hence
\[
{\rm VolEx}_{N_{\nu}}(s) =\int_{N_{\nu}\cap B_{s}} \frac{|d\beta_{\nu}|^2}{R^{n+2}}.
\]
Define
\[
\tilde{\beta}_\nu = \frac{\beta_\nu}{\|\underline{f_{\nu}}\|_{L^{2}}(2,\tau)}, \qquad \tilde{y}_{i, \nu} = \frac{y_i}{\|\underline{f_{\nu}}\|_{L^{2}}(2,\tau)} .
\]
where we regard $y_i$ as functions on $N_\nu$. From the bound $(ii)$ we see that $\|\underline{\tilde{\beta}_\nu},\underline{\tilde{y}_{i,\nu}}\|_{L^{2}(N_\nu\cap B_4)} \leq C_1\tau^{-2}$.  Since $\tilde{\beta}_\nu, \tilde{y}_{i,\nu}$ are harmonic, the mean-value inequality yields $L^{\infty}$ bounds on compact sets of $N_{\nu}\cap B_4$.  Thus $\tilde{\beta}_\nu, \tilde{y}_{i,\nu}$ converge locally smoothly to harmonic functions $\tilde{\beta}, \tilde{y}_i$  on compact subsets of $\mfC\setminus \mfC_{sing} \cap B_4$. From the $L^{\infty}$ bound the convergence holds in $L^2(\mfC \cap B_3)$.  From the formulas for $\beta, y_i$ in terms of $f$, together with~\eqref{eq: betaVsPot},~\eqref{eq: GraphCylinder}, we have
\begin{equation}\label{eq: relFBetaY}
\tilde{\beta} = -\frac{1}{2}R^3 \frac{\del}{\del R} \left(\frac{\tilde{f}}{R^2}\right), \qquad \tilde{y}^i= -\frac{\del \tilde{f}}{\del x^i}.
\end{equation}
In particular,
\begin{equation}\label{eq: relFBetaYSeries}
\begin{aligned}
\tilde{\beta} &=-\frac{1}{2} \sum_{\{(p, \ell, \alpha_j): 2p+|\ell|-\alpha_j \leq 2\}}\sum_{i=1}^{N_j}(2p+|\ell|-\alpha_j -2)a_{p,\ell,i}^j r^{2p-\alpha_j}x^{\ell}\phi^j_{i} + \tilde{\beta}_{>2}\\
\tilde{y}_k&= - \sum_{\{(p, \ell, \alpha_j): 2p+|\ell|-\alpha_j \leq 2\}}\sum_{i=1}^{N_j}\ell_ka_{p,\ell,i}^j r^{2p-\alpha_j}x^{\ell-e_k}\phi^j_{i} + \tilde{y_k}_{>1}
\end{aligned}
\end{equation}
where $e_k$ is the standard unit vector with $1$ in the $k$-th slot,  $\tilde{\beta}_{>2} $ is a harmonic function with strictly larger than quadratic growth and $\tilde{y_k}_{>1}$ is harmonic with strictly larger than linear growth.  Since $\tilde{\beta}_\nu$ is harmonic and satisfies $Av_{N_{\nu}}(\tilde{\beta}_\nu, 4)=0$  we see that $Av_{\mfC}(\tilde{\beta}, 4)=0$.  From the formula for $\tilde{\beta}$ in terms of $\tilde{f}$, this immediately yields that the expansion~\eqref{eq: expOfFinfty} does not contain a term of degree $0$.  Define
\[
\begin{aligned}
I_{(0,k)} &= \{(j, p, \ell) : 2p+|\ell| -\alpha_j \in (0,k)\},\\
I_k&= \{(j, p, \ell) : 2p+|\ell| -\alpha_j =k\}.\\
\end{aligned}
\]
Then we can write
\[
\tilde{f} = \tilde{f}_{(0,2)} +\tilde{f}_2 +\tilde{f}_{>2}
\]
where, for example, $\tilde{f}_{(0,2)}$ is obtained by summing over  $(j,p,\ell) \in I_{(0,2)}$,  and $\tilde{f}_2$ is defined analogously using $I_2$.  Similarly we can write
\[
\tilde{\beta} = \tilde{\beta}_{(0,2)} + \tilde{\beta}_{>2} \qquad \tilde{y}_i = (\tilde{y}_i)_{(0,1)} + (\tilde{y}_{i})_{1} + (\tilde{y}_i)_{>1}.
\]

We need to exploit the volume excess control to estimate terms arising from $\tilde{f}_{(0,2)}$.  Suppose $\tilde{f}$ contains a term of total homogeneous degree $0<2p+|\ell|-\alpha_j<2$.  Since $\alpha_j <0$ the number of such terms is finite and bounded uniformly in $\cK$.  Furthermore, from assumption $(iii)$ on the volume excess and Fatou's lemma we have
 \begin{equation}\label{eq: volExintegral}
 \int_{\mfC \cap \{2^{1-b} < R < 2\}} R^{-(n+2)} |\nabla \tilde{\beta}|^2  <C_2<+\infty.
 \end{equation}
 In particular, for $\nu$ sufficiently large Fatou's lemma gives
 \[
  \int_{\mfC \cap \{2^{1-b} < R < 2\}} R^{-(n+2)} |\nabla \tilde{\beta}|^2\leq 5  \int_{N_{\nu} \cap \{2^{1-b} < R < 2\}} R^{-(n+2)} |\nabla \tilde{\beta}_{\nu}|^2.
  \]
 Before proceeding note that if $b=+\infty$, then the bound~\eqref{eq: volExintegral} together with~\eqref{eq: relFBetaYSeries} implies that expansion of $\tilde{f}$ does not contain any terms with $0<2p+|\ell|-\alpha_j<2$.  What follows is an effective version of this result for sufficiently large annuli.  Indeed, from ~\eqref{eq: relFBetaYSeries} that if $0<2p+|\ell|-\alpha_j <2$, then there is a constant $C_3= C_3(\cK)$ such that
 \begin{equation}\label{eq: lowModeBound}
 |a^j_{p,\ell,q}|^2\cdot2^{-2b(2p+|\ell|-\alpha_j-2)} \leq  C_3 \int_{\mfC\cap \{2^{1-b} < R < 2\}} R^{-(n+2)} \bigg|\frac{\del}{\del R} \left(R^3 \frac{\del}{\del R} \left( \frac{\tilde{f}}{R^2}\right)\right)\bigg|^2.
 \end{equation}
Fix $\theta \in (0, \frac{1}{2})$ with $\theta <\theta_0$.  Since $\mathfrak{H}_{2m}(\Sigma_{\kappa})$ has the same dimension for all $\kappa \in \cK$ we have $2p+|\ell|-\alpha_j <2-\delta(\cK)$ for some $\delta(\cK)$ depending only on $\cK$.  Thus we can choose $b$ sufficiently large depending only on $\cK, \theta$ such that the lower degree terms are negligible
\begin{equation}\label{eq: flowDegreeDecay}
\theta^{-n-4}\int_{\mfC\cap B_{\theta}}|\tilde{f}_{(0,2)}|^2 \leq 10^{-2} \int_{\mfC \cap\{2^{1-b} < R < 2\}} R^{-(n+2)} |\nabla \tilde{\beta}|^2, 
\end{equation}
 \begin{equation}\label{eq: betaLowDegreeDecay}
 (2\theta)^{-n-4} \int_{\mfC \cap B_{2\theta} } |\tilde{\beta}_{(0,2)}|^2 \leq 10^{-2} \int_{\mfC \cap\{2^{1-b} < R < 2\}} R^{-(n+2)} |\nabla \tilde{\beta}|^2 
 \end{equation}
 and
 \begin{equation}\label{eq: yLowDegreeDecay}
 (2\theta)^{-n-2} \sum_{i=1}^{n}\int_{\mfC \cap B_{2\theta}} |(\tilde{y_i})_{(0,1)}|^2  \leq 10^{-2} \int_{\mfC \cap \{2^{1-b} < R < 2\}} R^{-(n+2)} |\nabla \tilde{\beta}|^2. 
 \end{equation}
Since $\tilde{\beta}_{>2}, \tilde{f}_{>2}, \tilde{y}_{>2}$ have degree strictly larger than $2$, we have the decay estimates
 \begin{equation}
 \begin{aligned}\label{eq: highDegreeDecay}
 (\theta)^{-n-4}\int_{B_{\theta}}(\tilde{f}_{>2})^2&\leq \theta^{2\alpha} C\\
 (2\theta)^{-n-4}\int_{B_{2\theta}}(\tilde{\beta}_{>2})^2  &\leq (2\theta)^{2\alpha}\int_{B_{1}}\tilde{\beta}^2\\
(2\theta)^{-n-2}\int_{B_{2\theta}}(\tilde{y}_i)_{>1}^2 &\leq (2\theta)^{2\alpha}\int_{B_1}\tilde{y}_i^2.
 \end{aligned}
 \end{equation}
 for $\alpha$ depending only on the spectrum of $\Sigma$.  
 
Let us address the terms $\tilde{f}_{2}, (\tilde{y}_{i})_{1}$, where we note that $(\tilde{y}_{i})_1 = -\frac{\del}{\del x_i} \tilde{f}_{2}$.  From the bounds of $\tilde{f}$ we evidently have 
\[
\sup_{\mfC \cap B_{\frac{3}{2}}} |\tilde{f}_2| \leq C
\]
for a constant $C$ depending only on $\cK, \tau$.

First consider the index set $I_2$.  Note that if $(j,p,\ell) \in I_2$ and $|\ell|=2$ then $p=\alpha_j=0$ the corresponding harmonic function is generated by $\mathfrak{su}(n)$. If $|\ell|=1$, then $2p-\alpha_j=1$ and hence $p=0, -\alpha_j=1$.  However, since $\Sigma$ is integrable in the sense of Definition~\ref{defn: strongInt}, any such harmonic function is generated by the action of $\mathfrak{su}(n)$ by Lemma~\ref{lem: rigid}.    Define
\[
q= \sum_{(j,p,0) \in I_{2}} \sum_{i=1}^{N_j} a^j_{p,\ell,i} r^{2p-\alpha_j}\phi^j_{i} 
\]
so $q$ is a quadratic growth harmonic function on $\bfC$ (note that in fact we must have $p=0$, since $q$ is harmonic).  Then $\tilde{f}_2-q$ is a harmonic function generated by the action of some element $\hat{a} \in \mathfrak{su}(n)$.  From the $L^{\infty}$ bounds for $\tilde{f}_2$ we have
\[
|\hat{a}|_{\mathfrak{su}(n)} + |q|_{L^2{(\Sigma)}} \leq C
\]
for a constant $C$ depending only on $\cK, \tau$.  By the integrability assumption on $\bfC$ and Lemma~\ref{lem: rigid} we can find sequences $\epsilon_{\nu}, \epsilon_{\nu}' \leq C \|\underline{f}_{\nu}\|_{L^{2}}(2,\tau) \rightarrow 0$ such that, with respect to the slightly modified data
\[
\begin{aligned}
\widehat{N}_\nu &:= e^{-\epsilon_\nu \hat{a}}\cdot N_\nu \\
 \bfC_{\epsilon_\nu'q}&: = \Cone(\exp^\cM_{\Sigma}\epsilon_\nu'q)\\
 \mfC_{\nu}&: = \mathbb{R}^k\times \bfC_{\epsilon_\nu'q}
 \end{aligned}
\]
 then we have
\begin{itemize}
\item  $\bfC_{\epsilon_\nu'q} \in \cK$.
\\
\item For $\nu$ sufficiently large, $\widehat{N}_{\nu}$ can be written as a graph of a function $\hat{f}_{\nu}$ satisfying $P_{1}(\eta, \tau, C\delta_{\nu})$, $P_2(\frac{5}{6})$ with respect to $\widehat{\mfC}_{\nu}$ as well as conditions $(ii), (iii)$, after possibly replacing $C_1, C_2$ with $2C_1, 2C_2$.  
\\
\item Passing to the limit we obtain a harmonic function $\tilde{f}$ on $\mfC\cap B_{\frac{3}{2}}$ with $\tilde{f}_{2}=0$.  
\end{itemize}
Thus, for $\theta$ sufficiently small depending only on $\cK$, and $b$ sufficiently large depending on $\theta, \cK$ and $\nu$ sufficiently large we have
\begin{itemize}
\item From ~\eqref{eq: betaLowDegreeDecay},~\eqref{eq: yLowDegreeDecay} and~\eqref{eq: highDegreeDecay} and the convergence in $L^2$ of $\tilde{\beta}_{\nu}, \tilde{y}_{i,\nu}$ to $\tilde{\beta}, \tilde{y}_{i}$ we obtain
\[
\begin{aligned}
\|\underline{\beta-Av_{\widehat{N}_{\nu}}(\beta, 2\theta)}\|^2_{L^{2}(\widehat{N}_{\nu}\cap B_{2\theta})} &\leq c(b){\rm VolEx}_{N_{\nu}}(2, 2^{1-b}) +10^{-5}\|\beta\|^2_{L^{2}(\widehat{N}_{\nu}\cap B_{1})}\\
\|\underline{y_i}\|^2_{L^{2}(\widehat{N}_{\nu}\cap B_{2\theta})}  &\leq c(b){\rm VolEx}_{N_{\nu}}(2, 2^{1-b}) + 10^{-5}\|\underline{y_i}\|^2_{L^{2}(\widehat{N}_{\nu}\cap B_{1})}
\end{aligned}
\]
Noting that $\|\underline{\beta}\|^2_{L^2(\widehat{N}_{\nu}\cap B_1)} \leq  4^{4}\|\underline{\beta}\|^2_{L^2(\widehat{N}_{\nu}\cap B_4)}$ (and similarly for $y_i$) yields conclusion ${\bf(c1)}$.
\\
\item From the bounds~\eqref{eq: flowDegreeDecay} and~\eqref{eq: highDegreeDecay}, for $\nu$ sufficiently large depending on $\theta$, $\widehat{N}_{\nu}$ is a graph over an open neighborhood of $\widehat{\mfC}_{\nu}\cap B_{\theta} \cap \{r>\theta \tau\}$ of a function $f_{\nu}$ satisfying $P_{1}(\eta, \tau, C\delta_{\nu})$, $P_2(\frac{5}{6})$ and such that
\[
\theta^{-n-4}\int_{\mfC_{\nu}\cap B_{\theta} \cap \{r>\theta \tau\}} f_{\nu}^2 \leq c(b){\rm VolEx}_{N_{\nu}}(2,2^{1-b}) + 10^{-1}\int_{\mfC_{\nu}\cap B_2 \cap\{ r> 2\tau\}} f_{\nu}^2
\]
\item From the estimates~\eqref{eq: ellipticEstimates} we can choose $\nu$ sufficiently large depending only on $\cK, \eta, \tau$ so that
\[
\sup_{\hat{\mfC} \cap B_{\theta} \cap \{r> \theta\tau\}} r^{-1}|df_{\nu}| + |D^2f_{\nu}| \leq \eta.
\]
establishing conclusion ${\bf (c2)}$.
\end{itemize}

\end{proof}

We now come to the proof of the main theorem.

\begin{proof}[Proof of Theorem~\ref{thm: main1}]
Suppose $N$ is an exact special Lagrangian in $B_{100}$ such that $\lambda_\ell N=N_{\ell} \rightarrow \mfC = \mathbb{R}^k\times \Cone(\Sigma)$ for some sequence $\lambda_\ell$ increasing to $+\infty$. Let $\mathcal{K}\subset \mathcal{M}$ be a connected compact set with non-empty interior parametrizing smooth special Legendrian deformations of $\Sigma$; we may assume $\mathcal{K}$ can be identified with a small closed ball in $T_{\Sigma}\mathcal{K}$ by the exponential map $\exp^{\cM}_{\Sigma}$. 

Given $\tau_0$ (to be fixed below) we fix $\eta >0$ so that
\[
\eta < \min\{ \eta_2(\mathcal{K}), \eta_3(\mathcal{K}, \frac{5}{6}), \eta_4 (\mathcal{K}, \tau_0), \eta_5(\mathcal{K}) \}.
\]
Let $\epsilon >0$ be given.  By Allard compactness, for $\ell$ sufficiently large depending on $\eta, \epsilon$ and $\cK$ sufficiently small we may assume that $N_{\ell}$ satisfies
\begin{itemize}
\item[$(i)$] The volume property $P_2(\frac{5}{6})$ at all scales $s\leq1$, and with respect to all cylinders $\mathbb{R}^k\times \Cone(\Sigma_{\kappa})$ for $\kappa \in \cK$.
\item[$(ii)$] We can choose $\tau_0$ sufficiently small so that
\[
\rho^{-n}\mathcal{H}^{n}(\exp(-a)N_{\ell} \cap B_{\rho} \cap \{r<4\rho\tau_0\}) < \omega_n2^{-(n+8)} 10^{-6}
\]
for all $\rho <4$ and for all $a \in \mathfrak{su}(n)$ with $|a|_{\mathfrak{su(n)}}\leq 1$.  
\item [$(iii)$] The small graph property $P_1(\eta, \tau_0, \epsilon)$ with respect to $\mfC$.
\item[$(iv)$] The harmonic property $P_3(\epsilon)$.
\item[$(v)$] The volume excess satisfies ${\rm VolEx}_{N}(2)<\epsilon$.
\end{itemize}

Let $b=b_0(\cK, \tau_0) \in \mathbb{Z}_{\geq 0}$ and $\theta= \theta(\cK)$ be the constants appearing in Proposition~\ref{prop: fDecay}. Let $N= N_{\ell}$ to ease notation.  We make the following induction statement.
\\

{\bf Induction statement, $I(s_0)$:} For all $s\in \mathbb{Z}_{\geq 0}$ and $s\leq s_0$ there is a sequence of scales $\rho_s= (\frac{1}{4})^{s_1}(\frac{\theta}{2})^{s_2}$ satisfying
\[
\rho_{s}= \frac{1}{4}\rho_{s-1} \quad \text{ or }\quad \rho_s = \frac{\theta}{2}\rho_{s-1}
\]
and a constant $\uC$ independent of $s$ such that, if we define
\[
\phi(s) = 10^{-s}\epsilon +\uC\sum_{\ell=0}^{s-1}10^{\ell-s}{\rm VolEx}_{N}(2\rho_{\ell}, 2^{1-b}\rho_{\ell})
\]
then
\begin{itemize}
\item[(1)] There is an exact special Lagrangian $N_{s}$ and a special Lagrangian cylinder $\mfC_{s} = \mathbb{R}^k\times \Cone(\exp_{\Sigma_{s-1}}^{\cM}q_s)$ defined inductively by $N_0= N, \Sigma_{0}=\Sigma$, and
\[
N_{s} = \exp(-a_{s})N_{s-1}, \qquad \mfC_{s} = \mathbb{R}^k\times \Cone(\exp_{\Sigma_{s-1}}^{\cM}q_s)
\]
for $a_s \in \mathfrak{su}(n)$, $q_{s}\in \mfH_{2m}(\Sigma_{s-1})$ satisfying
\[
|a_s|_{\mathfrak{su}(n)} + |q_{s}|_{L^2(\Sigma_{s-1})} \leq \uC^2\phi(s-1).
\]
\item[(2)] Define $\beta_{s} = \beta - Av_{N_s}(\beta, 4\rho_s)$.  With respect to this normalization, $N_s$ is the graph associated to a normalized potential $f_s$ defined over $\mfC_{s}\cap B_{2\rho_s}\cap\{r>2\rho_s\tau_0\}$ and we have the estimates
\begin{equation}\label{eq: inductiveEstBetaY}
\|\underline{\beta}_s, \underline{y}\|^2_{L^2(N\cap B_{4\rho_s})} \leq \phi(s)
\end{equation}
\begin{equation}\label{eq: inductiveEstfL2}
\|\underline{f}_s\|^2_{L^2}(2\rho_s, \tau_0) \leq \uC\phi(s)
\end{equation}
\begin{equation}\label{eq: inductiveEstdf}
\sup_{\mfC_{s}\cap B_{2\rho_s} \cap \{r>2\rho_s\tau_0\}} r^{-1}|df_s| +|D^2f_{s}| < \eta.
\end{equation}
In particular, with respect to this normalization, and at scale $\rho_s$, $N_{s}$ satisfies:
\begin{itemize}
\item the small graph property $P_1(\eta, \tau_0, \uC\phi(s))$ with respect to $\mfC_{s}$,
\item the volume property $P_2(\frac{5}{6})$ with respect to $\mfC_{s}$, 
\item the harmonic property $P_3(\phi(s))$
\end{itemize}
\end{itemize}

We will show that $I(s-1)$ implies $I(s)$ if $\uC$ is chosen large depending on $\cK,\tau_0$, and $\epsilon$ is chosen small depending only on $\cK, \tau_0, \eta, \uC$. Note that from the properties of $N$ (c.f. property (v)) and since $b<+\infty$ we have
\[
\phi(s) \leq 2\uC\epsilon, \qquad  \sum_{s=0}^{s_0} \phi(s) \leq C(\uC,\theta, b) \epsilon.
\]
Thus, as long as $I(s)$ holds, the special Legendrians $\Sigma_{s} \in \cK$, provided $\epsilon$ is chosen sufficiently small depending on $\uC, \cK, \tau_0$. The base case, $I(0)$, follows from our choice of $N$ as above.  We now consider a trichotomy of cases, which we separate into lemmas.  

\begin{lem}\label{lem: case1}
In the above setting, suppose that $I(s-1)$ holds and
\[
\|\underline{f}_{s-1}\|(2\rho_{s-1}, \tau_0) \leq 10^{-2}\delta_3\tau_0^2 \|\underline{\beta}_{s-1},\underline{y}\|_{L^{2}(N\cap B_{4\rho_{s-1}})}
\]
where $\delta_3=\delta_3(\cK)$ is the constant appearing in Lemma~\ref{lem: easyDecay}.  Then, $I(s)$ holds provided $\uC$ is chosen sufficiently large depending only on $\cK, \tau_0$ and $\epsilon$ is sufficiently small depending on $\cK, \tau_0$.
\end{lem}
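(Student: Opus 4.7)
The strategy is to apply Lemma~\ref{lem: easyDecay} at scale $\rho_{s-1}$ to obtain fast decay of $\|\underline{\beta}_{s-1},\underline{y}\|_{L^2}$, use Proposition~\ref{prop: propOfSmall} to extend the local potential inward from the annulus on which $f_{s-1}$ lives, and set $\rho_s = \rho_{s-1}/4$, $a_s = 0$, $q_s = 0$ (so that $N_s = N_{s-1}$, $\mfC_s = \mfC_{s-1}$, and the bound $|a_s|+\|q_s\| \leq \uC^2\phi(s-1)$ holds trivially). The hypotheses of Lemma~\ref{lem: easyDecay} are all available: the graph bound on $f_{s-1}$ comes from $I(s-1)$, hypothesis (i) is the Case~1 smallness assumption (up to a harmless adjustment of the absolute constant $10^{-2}$ versus $10^{-3}$), and hypothesis (ii) is guaranteed by the choice (ii) of $\tau_0$. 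The lemma yields
\[
\|\underline{\beta}_{s-1},\underline{y}\|^2_{L^2(N\cap B_{4\rho_s})} \leq 10^{-4}\|\underline{\beta}_{s-1},\underline{y}\|^2_{L^2(N\cap B_{4\rho_{s-1}})} \leq 10^{-4}\phi(s-1),
\]
and the renormalization $\beta_s = \beta - Av_{N_s}(\beta,4\rho_s)$ only decreases the $L^2$ norm by the extremal property of the mean. Hence~\eqref{eq: inductiveEstBetaY} holds with considerable slack, since $10^{-4} \leq 10^{-1} \leq \phi(s)/\phi(s-1)$.

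To produce $f_s$, I apply Proposition~\ref{prop: propOfSmall} at scale $\rho_{s-1}$ with $\gamma$ close to $1$, obtaining an extension $F$ of $f_{s-1}$ on $\mfC_{s-1}\cap B_{\rho_*}\cap\{r>\rho_{s-1}\tau_*\}$ with $\tau_*^2 \lesssim \|\underline{\beta}_{s-1},\underline{y}\|_{L^2(N\cap B_{4\rho_{s-1}})}$ and $\rho_* \geq 2\rho_s$. For $\epsilon$ sufficiently small (depending on $\tau_0$), $\tau_*<\tau_0/2$, so the extension covers the target domain $\mfC_s\cap B_{2\rho_s}\cap\{r>2\rho_s\tau_0\}$. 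Setting $f_s = F + c$ with the additive constant dictated by the $\beta$-renormalization (Remark~\ref{rk: changeOfNorm}), the $C^2$ estimate~\eqref{eq: inductiveEstdf} follows directly from Proposition~\ref{prop: propOfSmall}, since on the new domain the prefactor $(r/\rho_{s-1})^{-2}$ is bounded by $4\tau_0^{-2}$, so $\sup r^{-1}|dF| + |D^2F| \lesssim \tau_0^{-2}\sqrt{\phi(s-1)}$, which is below $\eta$ for $\epsilon$ small.

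For~\eqref{eq: inductiveEstfL2}, I integrate the pointwise estimate
\[
|F(z,z')| \leq C\bigl(\rho_{s-1}^{2}\|\underline{\beta}_{s-1},\underline{y}\|_{L^2(N\cap B_{4\rho_{s-1}})} + r^{2}\|\underline{f}_{s-1}\|_{L^{2}}(2\rho_{s-1},\tau_0)\bigr)
\]
from Proposition~\ref{prop: propOfSmall}, squared over the target domain of volume $\lesssim \rho_s^n$, and normalize by $(2\rho_s)^{-n-4}$; the prefactor $\rho_{s-1}^4 = 256\rho_s^4$ is absorbed into an absolute constant, giving $\|\underline{F}\|^2_{L^{2}}(2\rho_s,\tau_0) \lesssim \|\underline{\beta}_{s-1}\|^2 + \|\underline{f}_{s-1}\|^2$. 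The constant shift $c$ contributes $c^2/\rho_s^4$ to the scale-invariant norm; by the mean value inequality for the subharmonic function $\beta^2$ (Lemma~\ref{lem: subharmonicFunctions}), $|c|^2 \lesssim \rho_s^{4}\|\underline{\beta}_{s-1}\|^2_{L^2(N\cap B_{4\rho_{s-1}})}$, so this term is also $\lesssim \|\underline{\beta}_{s-1}\|^2$. The Case~1 hypothesis gives $\|\underline{f}_{s-1}\|^2\lesssim \tau_0^4\|\underline{\beta}_{s-1}\|^2 \leq \|\underline{\beta}_{s-1}\|^2$, so combining, $\|\underline{f}_s\|^2 \leq C''\phi(s-1)$; choosing $\uC\geq 10C''$ (depending only on $\cK$ and $\tau_0$) yields $\|\underline{f}_s\|^2 \leq \uC\cdot 10^{-1}\phi(s-1) \leq \uC\phi(s)$, closing the induction. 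The main technical subtlety is the scale matching of the normalization: the mean value inequality places the constant shift $c$ at precisely the scale $\sim \rho_s^2$ times a small quantity, which is exactly what the rescaling factor $\rho_s^{-4}$ needs in order to absorb it and keep the scale-invariant $L^2$ norm bounded as $\rho_s \to 0$.
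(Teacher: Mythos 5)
Your proposal is correct and follows essentially the same route as the paper's proof: apply Lemma~\ref{lem: easyDecay} at scale $\rho_{s-1}$ with $N_s=N_{s-1}$, $\mfC_s=\mfC_{s-1}$ and $\rho_s=\rho_{s-1}/4$, then invoke Proposition~\ref{prop: propOfSmall} to extend the potential and obtain the $L^2$ and $C^2$ estimates, controlling the renormalization constant $Av_{N_s}(\beta_{s-1},4\rho_s)$ via the mean-value inequality exactly as in~\eqref{eq: controlOfAverages}. Your explicit handling of the $10^{-2}$ versus $10^{-3}$ constant mismatch and of the scale matching for the constant shift are correct and, if anything, slightly more detailed than the paper's treatment.
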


\begin{proof}
By the induction assumption and our choice of $\tau_0$ (c.f. property (ii) above) we may apply Lemma~\ref{lem: easyDecay} to deduce that at scale $\rho_{s} =\frac{1}{4}\rho_{s-1}$, we have
\[
\|\underline{\beta}_{s-1}, \underline{y}\|^2_{L^{2}(N_{s-1}\cap B_{4\rho_{s}})} \leq \frac{1}{100}\|\underline{\beta}_{s-1}, \underline{y}\|^2_{L^{2}(N_{s-1}\cap B_{4\rho_{s-1})}}
\]
Set $N_{s}=N_{s-1}$ and $\mfC_{s}= \mfC_{s-1}$.  Then statement $(1)$ of $I(s)$ is trivially satisfied.  Since subtracting the average decreases the $L^2$ norm, $\beta_{s} = \beta_{s-1}-Av_{N_{s}}(\beta_{s-1}, 4\rho_s)$ has
\[
\|\underline{\beta}_{s}, \underline{y}\|^2_{L^{2}(N_{s}\cap B_{4\rho_{s}})} \leq \frac{1}{100}\|\underline{\beta}_{s-1}, \underline{y}\|^2_{L^{2}(N_{s-1}\cap B_{4\rho_{s-1})}}
\]
which easily implies~\eqref{eq: inductiveEstBetaY}. 

We now apply Proposition~\ref{prop: propOfSmall} with $\gamma =\frac{5}{6}$ to conclude that if $\epsilon$ is chosen sufficiently small depending on $\cK, \tau_0$, then $f_{s-1}$ extends to $B_{2\rho_{s}}\cap \{r>2\rho_s\tau_0\}$ and satisfies
\[
\begin{aligned}
\|\underline{f}_{s-1}\|^2_{L^{2}}(2\rho_s, \tau_0) &\leq C(\|\underline{\beta}_{s-1}, \underline{y}\|^2_{L^{2}(N_{s-1}\cap B_{4\rho_{s-1}})} + \|\underline{f}_{s-1}\|_{L^2}^2(2\rho_{s-1}, \tau_0) \\
&\leq 2C\|\underline{\beta}_{s-1}, \underline{y}\|^2_{L^{2}(N_{s-1}\cap B_{4\rho_{s-1}})} 
\end{aligned}
\]
as well as
\[
\begin{aligned}
\sup_{\mfC_{s}\cap B_{2\rho_{s}}\cap\{r>2\rho_s\tau_0\}} r^{-1}|df_{s-1}| + |D^2 f_{s-1}| \leq 2C\|\underline{\beta}_{s-1}, \underline{y}\|_{L^{2}(N_{s-1}\cap B_{4\rho_{s-1}})} 
\end{aligned}
\]
for a constant $C$ depending only on $\cK, \tau_0$.  Now the normalized potential is given by 
\[
f_{s} = f_{s-1}-Av_{N_{s}}(\beta_{s-1}, 4\rho_{s}).
\]
Since $I(s-1)$ holds, we obtain~\eqref{eq: inductiveEstdf} for $\epsilon$ small depending on $\eta, \cK, \tau_0$.  By the mean-value inequality we have
\begin{equation}\label{eq: controlOfAverages}
(2\rho_{s})^{-n-4} \int_{\mfC_{s}\cap B_{2\rho_{s}}\cap \{r>2\rho_{s}\tau_0\}}(Av_{N_{s}}(\beta_{s-1}, 4\rho_{s}))^2 \leq C\|\underline{\beta}_{s-1}\|^2_{L^{2}(N_{s-1}\cap B_{4\rho_{s-1}})}
\end{equation}
for a constant $C$ depending on $\cK, \tau_0$.  All together we have
\[
\|\underline{f}_{s}\|^2_{L^{2}}(2\rho_s, \tau_0) \leq C\|\underline{\beta}_{s-1}, \underline{y}\|^2_{L^{2}(N_{s-1}\cap B_{4\rho_{s-1}})} 
\]
for $C$ depending only on $\cK, \tau_0$.  Provided the constant $\uC$ is chosen large, depending only on $\cK, \tau_0$, this implies~\eqref{eq: inductiveEstfL2}.    Taken together these estimates imply property (2), and hence $I(s)$ holds.
\end{proof}

Next we consider the case when the volume excess is dominant.

\begin{lem}\label{lem: case2}
In the above setting, suppose that $I(s-1)$ holds and
\[
\begin{aligned}
10^{-2}\delta_3\tau_0^2 \|\underline{\beta}_{s-1},\underline{y}\|_{L^{2}(N\cap B_{4\rho_{s-1}})}&\leq \|\underline{f}_{s-1}\|(2\rho_{s-1}, \tau_0)\\ 
\|\underline{\beta}_{s-1}, \underline{y}_{s-1}\|^2_{L^{2}(N_{s-1}\cap B_{4\rho_{s-1}})} + \|\underline{f}_{s-1}\|^2_{L^{2}(2\rho_{s-1}, \tau_0)} &\leq {\rm VolEx}_{N}(2\rho_{s-1},2^{1-b}\rho_{s-1})
\end{aligned}
\]
Then, $I(s)$ holds provided $\uC$ is chosen sufficiently large depending on $\cK, \tau_0$ and $\epsilon$ is sufficiently small depending on $\uC, \cK, \tau_0$.
\end{lem}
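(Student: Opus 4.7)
The plan is to apply Proposition~\ref{prop: fDecay} to $N_{s-1}$ at scale $\rho_{s-1}$, setting $\rho_s := (\theta/2)\rho_{s-1}$ for the constant $\theta = \theta(\mathcal{K})$ produced there. The two inequalities of case 2 are tailored to supply conditions $(ii)$ and $(iii)$ of Proposition~\ref{prop: fDecay} (with $C_1 = 10^2/\delta_3$ and some $C_2 = C_2(\mathcal{K},\tau_0)$), while $I(s-1)$ supplies the remainder of its hypothesis $(i)$.

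First I would verify the hypotheses of Proposition~\ref{prop: fDecay}. Properties $P_1(\eta,\tau_0,\uC\phi(s-1))$, $P_2(5/6)$, $P_3(\phi(s-1))$ of $N_{s-1}$ at scale $\rho_{s-1}$ relative to $\mfC_{s-1}$ come directly from $I(s-1)(2)$. Since $\phi(s-1) \leq 2\uC\epsilon$, the smallness thresholds $\delta_4(\mathcal{K},\tau_0,\eta,C_1,C_2)$ and $\delta_2$ are met by choosing $\epsilon$ small depending on $\uC, \mathcal{K}, \tau_0, \eta$. Condition $(ii)$ is precisely the first assumption of case 2 with $C_1=10^2/\delta_3$. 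For $(iii)$, fixing $b = b_0(\mathcal{K},\tau_0)$, the second assumption of case 2 controls $\|\underline{f}_{s-1}\|^2_{L^2}$ from above by ${\rm VolEx}_N(2\rho_{s-1},2^{1-b}\rho_{s-1})$; combined with the a priori bound ${\rm VolEx}_N(2) \leq \epsilon$ from item $(v)$, this suffices to bound the ratio ${\rm VolEx}_N(2\rho_{s-1},2^{1-b}\rho_{s-1})/\|\underline{f}_{s-1}\|_{L^2}(2\rho_{s-1},\tau_0)$ by a uniform constant $C_2 = C_2(\mathcal{K},\tau_0)$ once $\epsilon$ is small.

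Second, I would apply Proposition~\ref{prop: fDecay} to produce $a_s \in \mathfrak{su}(n)$ and $q_s \in \mfH_{2m}(\Sigma_{s-1})$ with $|a_s|+|q_s|_{L^2} \leq C(\mathcal{K},\tau_0)\|\underline{f}_{s-1}\|_{L^2}(2\rho_{s-1},\tau_0)$, and set $N_s := \exp(-a_s)N_{s-1}$, $\mfC_s := \mathbb{R}^k \times \Cone(\exp^{\mathcal{M}}_{\Sigma_{s-1}}q_s)$. Using $\|\underline{f}_{s-1}\|^2_{L^2} \leq \uC\phi(s-1) \leq 2\uC^2\epsilon$ and choosing $\uC$ large and $\epsilon$ small, the bound $|a_s|+|q_s|_{L^2} \leq \uC^2 \phi(s-1)$ of $I(s)(1)$ follows. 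The drift $\Sigma_s = \exp^{\mathcal{M}}_{\Sigma_{s-1}}q_s$ remains in $\mathcal{K}$ by the summability $\sum_{\ell \leq s}|q_\ell|_{L^2} \leq C(\uC,\theta,b)\epsilon$. Then I would convert conclusions $(c_1), (c_2)$ into the estimates of $I(s)(2)$. Since $\beta_s := \beta - Av_{N_s}(\beta,4\rho_s)$ coincides with the normalization of $\beta$ appearing in $(c_1)$, that conclusion reads
\[
\|\underline{\beta_s},\underline{y}\|^2_{L^2(N_s\cap B_{4\rho_s})} \leq c(b)\,{\rm VolEx}_N(2\rho_{s-1},2^{1-b}\rho_{s-1}) + \tfrac{1}{10}\|\underline{\beta_{s-1}},\underline{y}\|^2_{L^2(N_{s-1}\cap B_{4\rho_{s-1}})}.
\]
Using $I(s-1)$ to bound the second term by $\tfrac{1}{10}\phi(s-1)$, and recalling the recursion $\phi(s) = \tfrac{1}{10}\phi(s-1) + \tfrac{1}{10}\uC\,{\rm VolEx}_N(2\rho_{s-1},2^{1-b}\rho_{s-1})$, one obtains~\eqref{eq: inductiveEstBetaY} provided $\uC \geq 10 c(b)$. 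The analogous $L^2$-bound for $\hat f - Av_{N_s}(\beta,4\rho_s)$ from $(c_2)$ is, after invoking Remark~\ref{rk: changeOfNorm} to identify it with $f_s$, exactly~\eqref{eq: inductiveEstfL2} under the same choice of $\uC$. The derivative bound~\eqref{eq: inductiveEstdf} and the small-graph property at scale $\rho_s$ are the graph/derivative estimates of $(c_2)$; $P_2(5/6)$ at scale $\rho_s$ follows from volume monotonicity and the $SU(n)$-invariance of ${\rm VolEx}$; and $P_3(\phi(s))$ is~\eqref{eq: inductiveEstBetaY} itself.

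The principal obstacle is the verification of $(iii)$ of Proposition~\ref{prop: fDecay} with a constant $C_2$ uniform in $s$: case 2's second inequality goes in the \emph{wrong} direction (it bounds $\|\underline{f}\|^2$ above by the volume excess increment, not below), so the argument must lean on the global smallness ${\rm VolEx}_N(2) \leq \epsilon$ to extract a fixed $C_2$. This in turn makes $\delta_4$ a function of $\epsilon$, which has to be reconciled carefully with the circular dependencies among $\uC, c(b), C, C_1, C_2$ when closing the induction.
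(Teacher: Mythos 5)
There is a genuine gap, and it is exactly at the point you flag as "the principal obstacle": hypothesis $(iii)$ of Proposition~\ref{prop: fDecay} cannot be verified in this case, and the global smallness ${\rm VolEx}_{N}(2)\leq \epsilon$ does not rescue it. Hypothesis $(iii)$ requires an upper bound
\[
{\rm VolEx}_{N}(2\rho_{s-1},2^{1-b}\rho_{s-1}) \leq C_2\,\|\underline{f}_{s-1}\|_{L^{2}}(2\rho_{s-1},\tau_0),
\]
whereas the second hypothesis of the lemma only gives $\|\underline{f}_{s-1}\|^2_{L^{2}}\leq {\rm VolEx}_{N}(2\rho_{s-1},2^{1-b}\rho_{s-1})$, i.e.\ a \emph{lower} bound ${\rm VolEx}/\|\underline{f}_{s-1}\|\geq \|\underline{f}_{s-1}\|$. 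Knowing in addition that ${\rm VolEx}_{N}(2\rho_{s-1},2^{1-b}\rho_{s-1})\leq\epsilon$ gives no lower bound on $\|\underline{f}_{s-1}\|_{L^2}$, so the ratio ${\rm VolEx}_{N}(2\rho_{s-1},2^{1-b}\rho_{s-1})/\|\underline{f}_{s-1}\|_{L^{2}}(2\rho_{s-1},\tau_0)$ can be arbitrarily large (take $\|\underline{f}_{s-1}\|^2$ much smaller than the volume-excess increment). Since $\delta_4$ in Proposition~\ref{prop: fDecay} depends on $C_2$, a non-uniform $C_2$ would force a non-uniform smallness threshold and the induction cannot close. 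So Proposition~\ref{prop: fDecay} is simply not applicable here; this is precisely why the trichotomy separates this case from Lemma~\ref{lem: case3}, where the reverse inequality \emph{does} hold and $(iii)$ can be checked.

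The paper's proof of this case uses no decay mechanism at all. One sets $N_s=N_{s-1}$, $\mfC_s=\mfC_{s-1}$ and $\rho_s=\tfrac14\rho_{s-1}$ (so part $(1)$ of $I(s)$ is trivial, with $a_s=0$, $q_s=0$), and then observes: by the mean value inequality for the subharmonic functions $\beta^2, y_i^2$ one has $\|\underline{\beta}_{s-1},\underline{y}\|^2_{L^2(N_s\cap B_{4\rho_s})}\leq 4^4\|\underline{\beta}_{s-1},\underline{y}\|^2_{L^2(N_{s-1}\cap B_{4\rho_{s-1}})}$, which by the second hypothesis is at most $4^4\,{\rm VolEx}_{N}(2\rho_{s-1},2^{1-b}\rho_{s-1})$; Proposition~\ref{prop: propOfSmall} then extends $f_{s-1}$ to scale $\rho_s$ with $\|\underline{f}_{s-1}\|^2_{L^2}(2\rho_s,\tau_0)\leq C\,{\rm VolEx}_{N}(2\rho_{s-1},2^{1-b}\rho_{s-1})$, and after re-centering $\beta_s=\beta_{s-1}-Av_{N_s}(\beta_{s-1},4\rho_s)$ and $f_s=f_{s-1}-Av_{N_s}(\beta_{s-1},4\rho_s)$ all the estimates of $I(s)(2)$ follow because the term $\uC\cdot 10^{-1}\,{\rm VolEx}_{N}(2\rho_{s-1},2^{1-b}\rho_{s-1})$ is \emph{built into} $\phi(s)$: one only needs $\uC$ large enough to absorb the fixed constants $4^4$ and $C(\cK,\tau_0)$. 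In other words, the volume-excess increments were placed in the definition of $\phi$ exactly so that this case requires no improvement from one scale to the next, only boundedness; the summability over $s$ is then handled by the telescoping of $\sum_{\ell}{\rm VolEx}_{N}(2\rho_\ell,2^{1-b}\rho_\ell)$, since $b<\infty$ means each dyadic annulus is counted a bounded number of times. Your write-up of the remaining bookkeeping (the recursion for $\phi$, the identification of normalizations via Remark~\ref{rk: changeOfNorm}, persistence of $P_2(5/6)$) is fine, but the argument needs to be rebuilt on this elementary route rather than on Proposition~\ref{prop: fDecay}.
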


\begin{proof}
Set $\rho_{s} = \frac{1}{4}\rho_{s-1}$ and let $N_{s}= N_{s-1} =\widehat{N}$ and $\mfC_{s} = \mfC_{s-1} =\widehat{\mfC}$, where we briefly suppress the dependence on $s$ to ease the notation.  Then part (1) of $I(s)$ is trivially satisfied.  From the mean-value inequality for subharmonic functions we have
\[
\|\underline{\beta_{s-1}}\|^2_{L^{2}(\widehat{N}\cap B_{4\rho_{s}})} \leq 4^{4} \|\underline{\beta_{s-1}}\|^2_{L^{2}(\hatN\cap B_{4\rho_{s-1}})}
\]
Now since $\beta_{s}= \beta_{s-1}-Av_{\hatN}(\beta_{s-1}, 4\rho_{s})$, and subtracting the average decreases the $L^2$ norm the assumption of the Lemma yields
\[
\|\underline{\beta_{s}}\|^2_{L^{2}(\hatN\cap B_{4\rho_{s}})}\leq 4^4{\rm VolEx}_{N}(2\rho_{s-1},2^{1-b}\rho_{s-1}).
\]
Thus \eqref{eq: inductiveEstBetaY} holds provided $\underline{C}$ is chosen large.  Now by volume monotonicity we have ${\rm VolEx}_{N}(2\rho_{s-1},2^{1-b}\rho_{s-1}) \leq \epsilon$.  By Proposition~\ref{prop: propOfSmall} we conclude that if $\epsilon$ is sufficiently small depending on $\cK, \tau_0$ then $f_{s-1}$ extends to\\ $\hatmfC \cap B_{2\rho_s} \cap\{r>2\rho_{s}\tau_0\}$ and satisfies
\[
\|\underline{f}_{s-1}\|^2_{L^{2}}(2\rho_s, \tau_0) \leq C{\rm VolEx}_{N}(2\rho_{s-1},2^{1-b}\rho_{s-1}),\\
\]
and
\[
\sup_{\hatmfC \cap B_{2\rho_s} \cap\{r>2\rho_{s}\tau_0\}} r^{-1}|df_{s-1}|+ |D^2f_{s-1}| \leq C\left({\rm VolEx}_{N}(2\rho_{s-1},2^{1-b}\rho_{s-1})\right)^{\frac{1}{2}}
\]
where we used the assumptions of the Lemma, and $C$ is a constant depending on $\cK, \tau_0$.  Now since $f_{s} = f_{s-1}-Av_{\hatN}(\beta_{s-1}, 4\rho_{s})$ this implies~\eqref{eq: inductiveEstdf} provided $\epsilon$ is small depending on $\cK, \tau_0, \eta$.  To obtain ~\eqref{eq: inductiveEstfL2} we can apply~\eqref{eq: controlOfAverages} to conclude
\[
 \|\underline{f}_{s}\|^2_{L^{2}}(2\rho_s, \tau_0) \leq C{\rm VolEx}_{N}(2\rho_{s-1},2^{1-b}\rho_{s-1})
 \]
for $C$ depending on $\cK, \tau_0$.  Thus, provided $\uC$ is large depending on $\cK, \tau_0$, and $\epsilon$ is small depending on $\uC, \cK, \tau_0$, we conclude the $I(s)$ holds.
\end{proof}

Finally, we come to the main case of the induction

\begin{lem}\label{lem: case3}
In the above setting, suppose that $I(s-1)$ holds and
\[
\begin{aligned}
\|\underline{\beta}_{s-1},\underline{y}\|_{L^{2}(N\cap B_{4\rho_{s-1}})} &\leq \left(10^{-2}\delta_3\tau_0^2\right)^{-1} \|\underline{f}_{s-1}\|(2\rho_{s-1}, \tau_0)\\
 {\rm VolEx}_{N}(2\rho_{s-1},2^{1-b}\rho_{s-1}) &\leq \|\underline{\beta}_{s-1}, \underline{y}_{s-1}\|^2_{L^{2}(N_{s-1}\cap B_{4\rho_{s-1}})} + \|\underline{f}_{s-1}\|^2_{L^{2}(2\rho_{s-1}, \tau_0)} \end{aligned}
\]
Then, $I(s)$ holds provided $\uC$ is chosen sufficiently large depending on $\cK, \tau_0$, and $\epsilon$ is sufficiently small depending on $\uC, \cK, \tau_0$.
\end{lem}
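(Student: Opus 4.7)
The plan is to apply Proposition~\ref{prop: fDecay} at scale $\rho_{s-1}$: the Case~3 hypotheses are designed precisely so that the potential $f_{s-1}$ dominates both the harmonic norm $\|\underline{\beta}_{s-1},\underline{y}\|$ and the volume excess ${\rm VolEx}_N(2\rho_{s-1},2^{1-b}\rho_{s-1})$, giving assumptions (ii) and (iii) of that proposition. Indeed, the first Case~3 inequality yields (ii) with $C_1 = 10^{2}\delta_3^{-1}$ and $\tau=\tau_0$. Combining both Case~3 inequalities with the induction bound $\|\underline{f}_{s-1}\|^2_{L^2}\le \uC\phi(s-1)\le 2\uC^2\epsilon$ gives ${\rm VolEx}\le C(\cK,\tau_0)\|\underline{f}_{s-1}\|^2_{L^2}\le C(\cK,\tau_0)\|\underline{f}_{s-1}\|_{L^2}$ once $\epsilon$ is small enough that $\|\underline{f}_{s-1}\|_{L^2}\le 1$, which is assumption (iii) with $C_2=C(\cK,\tau_0)$. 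Hypothesis (i) is immediate from $I(s-1)$ after requiring $\epsilon$ small enough that $\uC\phi(s-1)\le \delta_4(\cK,\tau_0,\eta,C_1,C_2)$.

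With $\theta=\theta(\cK)$ and $b=b_0(\cK,\tau_0)$ fixed by Proposition~\ref{prop: fDecay}, I would choose the new scale $\rho_s = \tfrac{\theta}{2}\rho_{s-1}$ so that $B_{4\rho_s}=B_{2\theta\rho_{s-1}}$ and $B_{2\rho_s}=B_{\theta\rho_{s-1}}$ match the conclusion balls in $(c_1)$ and $(c_2)$. The proposition then produces elements $a_s\in \mathfrak{su}(n)$ and $q_s\in\mfH_{2m}(\Sigma_{s-1})$ with $|a_s|+\|q_s\|_{L^2}\le C(\cK,\tau_0)\|\underline{f}_{s-1}\|_{L^2}$; setting $N_s=\exp(-a_s)N_{s-1}$ and $\mfC_s=\mathbb{R}^k\times \Cone(\exp^\cM_{\Sigma_{s-1}}q_s)$ verifies statement~(1) of $I(s)$. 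The total drift $\sum_{j\le s}(|a_j|+\|q_j\|_{L^2})$ is controlled by a constant times $\sqrt{\epsilon}$ (since the $\phi(j)$ are geometrically summable), so $\Sigma_s$ remains in $\cK$ provided $\epsilon$ is small.

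To verify statement~(2), I would renormalize by $\beta_s:=\beta-Av_{N_s}(\beta,4\rho_s)$; by Remark~\ref{rk: changeOfNorm} the associated normalized local potential is $f_s:=\hat f-Av_{N_s}(\beta,4\rho_s)$, where $\hat f$ is the potential supplied by $(c_2)$. Using the recursion $\phi(s)=\tfrac{1}{10}\phi(s-1)+\tfrac{\uC}{10}{\rm VolEx}_N(2\rho_{s-1},2^{1-b}\rho_{s-1})$ together with $(c_1)$ and $(c_2)$ gives
\[
\|\underline{\beta}_s,\underline{y}\|^2_{L^2(N_s\cap B_{4\rho_s})} \le c(b){\rm VolEx}_N(2\rho_{s-1},2^{1-b}\rho_{s-1})+\tfrac{1}{10}\phi(s-1)\le \phi(s),
\]
\[
\|\underline{f}_s\|^2_{L^2}(2\rho_s,\tau_0) \le c(b){\rm VolEx}_N(2\rho_{s-1},2^{1-b}\rho_{s-1})+\tfrac{\uC}{10}\phi(s-1)\le \uC\phi(s),
\]
provided $\uC$ is taken large enough (depending on $\cK,\tau_0,b_0$) to absorb $c(b)$; the constant shift $Av_{N_s}(\beta,4\rho_s)$ is handled by an estimate analogous to~\eqref{eq: controlOfAverages}. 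The pointwise bound~\eqref{eq: inductiveEstdf} is inherited directly from the last display in $(c_2)$ of Proposition~\ref{prop: fDecay}.

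The main obstacle is the careful ordering of constants: $\theta(\cK)$ and $b_0(\cK,\tau_0)$ must be fixed first from Proposition~\ref{prop: fDecay}; then $\uC$ is chosen large depending on the resulting $c(b_0)$ so that the recursion for $\phi(s)$ closes; only afterwards does one pick $\epsilon$ small (depending on $\uC,\cK,\tau_0,\eta$) to ensure that (a)~the induction hypotheses propagate at the new scale, (b)~the bound $C_2 = C(\cK,\tau_0)$ remains valid and $\epsilon$--independent for the next application of Proposition~\ref{prop: fDecay}, and (c)~the cumulative data $(a_j,q_j)$ keep the reference cylinder inside $\cK$.
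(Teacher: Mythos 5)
Your proposal is correct and follows essentially the same route as the paper: verify hypotheses (i)--(iii) of Proposition~\ref{prop: fDecay} from $I(s-1)$ and the Case~3 inequalities (with $C_1=10^2\delta_3^{-1}$, $C_2$ fixed by $\cK,\tau_0$ after noting $\|\underline{f}_{s-1}\|_{L^2}\leq 1$ for small $\epsilon$), set $\rho_s=\tfrac{\theta}{2}\rho_{s-1}$, and read off parts (1) and (2) of $I(s)$ from conclusions $(c_1)$ and $(c_2)$ together with the recursion $\phi(s)=\tfrac{1}{10}\phi(s-1)+\tfrac{\uC}{10}{\rm VolEx}_N(2\rho_{s-1},2^{1-b}\rho_{s-1})$. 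Your write-up is in fact more explicit than the paper's own proof about the verification of the hypotheses and the ordering of the constants; note only that the shifted averages need no separate treatment, since $(c_1)$ and $(c_2)$ already bound the re-normalized quantities $\beta-Av_{\widehat N}(\beta,2\theta\rho)$ and $\hat f-Av_{\widehat N}(\beta,2\theta\rho)$ directly.
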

\begin{proof}
Since $I(s-1)$ holds, we can apply Proposition~\ref{prop: fDecay} provided $\epsilon$ is chosen small depending on $\cK, \delta_{3}(\cK), \tau_0, \uC, \eta$.  Here we use that
\[
 {\rm VolEx}_{N}(2\rho_{s-1},2^{1-b}\rho_{s-1}) = {\rm VolEx}_{N_{s-1}}(2\rho_{s-1},2^{1-b}\rho_{s-1}) 
\]
by $SU(n)$ invariance.  Let $\rho_{s} = \left(\frac{\theta}{2}\right)\rho_{s-1}$.  Proposition~\ref{prop: fDecay} yields the following conclusions
\begin{itemize}
\item there exist special Lagrangians $N_{s}$ and $\mfC_{s}$ satisfying part (1) of $I(s)$, provided $\uC$ is chosen sufficiently large depending on $\cK, \tau_0$.
\item we have the estimate 
\[
\|\underline{\beta}_{s},\underline{y}\|^2_{L^{2}(N_{s}\cap B_{4\rho_{s}})} \leq  {\rm VolEx}_{N}(2\rho_{s-1},2^{1-b}\rho_{s-1}) +\frac{1}{10}\|\underline{\beta}_{s-1}, \underline{y}\|^2_{L^2(N_{s-1}\cap B_{4\rho_{s-1}})}
\]
which easily implies~\eqref{eq: inductiveEstBetaY}.
\item $N_{s}$ is the graph associated to a normalized potential function $f_{s}$ defined on $\mfC_{s}\cap B_{2\rho_{s}}\cap \{r>2\rho_s\tau_0\} $ satisfying
\[
\|\underline{f}_s\|^2_{L^{2}}(2\rho_{s},\tau_0) \leq {\rm VolEx}_{N}(2\rho_{s-1},2^{1-b}\rho_{s-1}) + \frac{1}{10} \|\underline{f}_{s-1}\|^2_{L^{2}}(2\rho_{s-1},\tau_0)
\]
as well as
\[
\sup_{\mfC_{s}\cap B_{2\rho_{s}}\cap \{r>2\rho_s\tau_0\}} r^{-1}|df_{s}| + |D^2f_{s}| < \eta
\]
which yields ~\eqref{eq: inductiveEstfL2} and~\eqref{eq: inductiveEstdf}
\end{itemize}

\end{proof}

We can now finish the proof.  Since one of Lemmas~\ref{lem: case1},~\ref{lem: case2}, or~\ref{lem: case3} must hold at each scale, we conclude that the induction statement $I(s)$ holds for all $s \in \mathbb{Z}\geq 0$, as long as $\uC$ is chosen large depending on $\cK, \tau_0$ and $\epsilon$ is chosen sufficiently small depending on $\uC, \cK, \tau_0$.  We note that for any $m\in \mathbb{Z}_{\geq 0}$ we have the summability of error bounds
\[
\Phi(m):= \sum_{s=m}^{\infty}\phi(s) \leq C\left(10^{-m}\epsilon + {\rm VolEx}_{N}(2\rho_{m-1})\right)
\]
for a constant $C$ depending on $\uC, \theta, b$.  By part $(1)$ of $I(s)$ we conclude
\begin{itemize}
\item For each $s$ there is an element $\hat{a}_{s}\in \mathfrak{su}(n)$ so that $N_{s}= \exp(-\hat{a}_{s})N$.
\\
\item There is an element $\hat{a} \in \mathfrak{su}(n)$ so that,
\begin{equation}\label{eq: finalAbnd}
\|\hat{a}\|_{\mathfrak{su}(n)} \leq C\epsilon, \quad \text{ and, for all $m$} \quad |\hat{a}_{m} - \hat{a}|_{\mathfrak{su}(n)} \leq C\Phi(m).
\end{equation}
\item There is a special Lagrangian cylinder $\hatmfC = \mathbb{R}^k \times \Cone(\widehat{\Sigma})$, where $\cK \ni \widehat{\Sigma} = \exp^{\cM}_{\Sigma}q$ for $q\in \mfH_{2m}(\Sigma)$ satisfying 
\begin{equation}\label{eq: finalqbnd}
\|q\|_{L^{2}(\Sigma)} \leq C\epsilon.
\end{equation}
\item We have a Hausdorff distance estimate in $B_{2}$
\[
d^{H}(\exp(\hat{a}_m)\mfC_{m}, \exp(\hat{a})\hatmfC \,; B_2) \leq C\Phi(m).
\]
\item By part (2) of $I(s)$ and Proposition~\ref{prop: propOfSmall}, for any $m\in \mathbb{Z}_{\geq 0}$, $N_{m}$ can be written as the graph of a function $f_m$ defined on $\hatmfC \cap B_{2\rho_{m}}\cap \{r >2\rho_{m}\tau_m\}$ for
\[
\tau_{m} = \Phi(m)^{\frac{1}{8}}
\]
 satisfying
\begin{equation}\label{eq: finalHigherOrderEst}
\sup_{\hatmfC\cap B_{2\rho_{m}} \cap \{r>2\rho_m\tau_m\}} r^{-1}|df_{m}| + |D^2f_{m}| \leq C\Phi(m)^{\frac{1}{4}}.
\end{equation}
\item The estimate~\eqref{eq: finalHigherOrderEst} implies that every tangent cone of $N$ agrees with $\exp(\hat{a})\hatmfC$ on $B_{2}\cap \{r> \delta\}$ for any $\delta>0$. On the other hand, since $\epsilon$ may be chosen arbitrarily small in ~\eqref{eq: finalAbnd} and~\eqref{eq: finalqbnd}, this implies $\hat{a}=0$ and $\hatmfC=\mfC$.  By volume monotonicity this is already sufficient to yield the uniqueness of the tangent cone.
\\
\item Furthermore, since $|df_m|$ controls the distance from $N_{m}$ to $\mfC$, ~\eqref{eq: finalHigherOrderEst} implies a Hausdorff distance bound, 
\[
d^{H}(N_{m}, \mfC;\, B_{2\rho_{m}} \cap \{r>\rho_m\tau_m\}) \leq C\rho_{m}\Phi(m)^{\frac{1}{4}}
\]
\item From Corollary~\ref{cor: easyVolEst} we have a volume bound
\[
\bigg| \frac{\mathcal{H}^{n}(N_{m} \cap B_{\rho_{m}} \cap \{r>\rho_m\tau_m\})}{\rho_m^n} - \frac{\mathcal{H}^{n}(\mfC \cap B_{\rho_{m}} \cap \{r>\rho_m\tau_m\})}{\rho_m^{n}}\bigg| \leq C\Phi(m)^{\frac{1}{4}}.
\]
\\
\end{itemize}

All together, these considerations together with Lemma~\ref{lem: HausdorffBound} below imply the Hausdorff distance bound
\[
d^{H}(N, \mfC;\, B_{\rho_m}) \leq C\rho_{m}\left(\Phi(m)^{\frac{1}{4n}} + {\rm VolEx}_{N}(2\rho_m)^{\frac{1}{n}}\right).
\]
It is not hard to show that this bound implies the rate estimate
\begin{equation}\label{eq: main1RateEst}
d^{H}(N, \mfC;\, B_{\rho}) \leq C\rho\left(\rho^{\alpha} + {\rm VolEx}_{N}(\rho^{\alpha})^{\frac{1}{4n}}\right)
\end{equation}
for some $\alpha>0$.
\end{proof}

In the proof of Theorem~\ref{thm: main1} we used the following simple result.
\begin{lem}\label{lem: HausdorffBound}
Suppose $N\subset \mathbb{C}^n$ is a connected special Lagrangian in $B_{2\rho}$ with $0\in N$, and suppose we have a special Lagrangian cylinder of the form $\mfC= \mathbb{R}^k\times \Cone(\Sigma)\subset \mathbb{C}^n$.  Suppose in addition that there are constants $\epsilon, \delta >0$ and $\tau\in (0,2)$ such that we have the following estimates:
\[
\rho^{-n}|\cH^{n}(N\cap B_{\rho}\cap\{r>\rho\tau\}) - \cH^{n}(\mfC\cap B_{\rho} \cap \{r>\rho\tau\})| \leq \delta
\]
\[
d^{H}(N,\mfC; B_{\rho}\cap\{r>\rho\tau\}) \leq \rho\epsilon.
\]
Then we have the Hausdorff distance bound
\[
d^{H}(N, \mfC; B_{\rho}) \leq \rho\epsilon + \rho C\left(\rho^{-n}\left[\cH^{n}(N\cap B_{\rho}) -\cH^{n}(\mfC\cap B_{\rho})\right]) + \tau^{n-k} +\delta\right)^{\frac{1}{n}}
\]
for a constant $C$ depending only on $\mfC ,n$.
\end{lem}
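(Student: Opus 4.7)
My plan is to bound the two one-sided distances $\sup_{x \in N \cap \bar B_\rho} d(x, \mfC)$ and $\sup_{y \in \mfC \cap \bar B_\rho} d(y, N)$ separately. The $\mfC$-side can be handled by a direct geometric argument using only the Hausdorff hypothesis, while the $N$-side will require volume monotonicity together with a mass bound inside the singular tube $\{r \leq \rho\tau\}$.

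For $y \in \mfC \cap \bar B_\rho$: if $r(y) > \rho\tau$ then directly $d(y, N) \leq \rho\epsilon$; if $r(y) \leq \rho\tau$ I project $y = (z, z') \in \mfC$ onto the axis by $y \mapsto p := (z, 0) \in \bR^k \times \{0\} \subset \mfC$ (noting $z \in \bR^k$ since $y \in \mfC$), paying $|z'| \leq \rho\tau$ in the triangle inequality, and then for the axis point $p$ I construct a shifted point $y' = (z, w)$ with $w \in \Cone(\Sigma)$ and $|w|$ slightly exceeding $\rho\tau$. When $|p| \leq \rho\sqrt{1-\tau^2}$ this $y'$ lies in $\bar B_\rho \cap \{r > \rho\tau\}$, so the Hausdorff hypothesis at $y'$ yields $d(p, N) \leq \rho\tau + \rho\epsilon$; when $|p|$ lies in the thin outer shell $(\rho\sqrt{1-\tau^2}, \rho]$, a preliminary radial contraction of $p$ costs at most $\rho(1-\sqrt{1-\tau^2}) \leq \rho\tau^2$, giving the same bound up to a constant. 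Altogether $\sup_y d(y, N) \leq C(\rho\tau + \rho\epsilon)$, which is absorbed into the claim via $\rho\tau \leq \rho(\tau^{n-k})^{1/n}$ for $\tau \leq 1$ (the case $\tau \geq 1$ being trivial since $2\rho$ is already a bound).

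For the $N$-side I first derive the mass bound
\[
\cH^n(N \cap B_\rho \cap \{r \leq \rho\tau\}) \leq C\rho^n(E + \tau^{n-k} + \delta), \qquad E := \rho^{-n}[\cH^n(N \cap B_\rho) - \cH^n(\mfC \cap B_\rho)],
\]
by decomposing $\cH^n(N \cap B_\rho)$ according to $\{r \gtrless \rho\tau\}$, using the hypothesized volume closeness on the graphical side together with the elementary estimate $\cH^n(\mfC \cap B_\rho \cap \{r \leq \rho\tau\}) \leq C_\mfC\rho^n\tau^{n-k}$ that comes from the product structure $\mfC = \bR^k \times \Cone(\Sigma)$. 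For $x \in N \cap \bar B_\rho$ with $h := d(x, \mfC)$, the key geometric observation is that $N \cap B_{h - \rho\epsilon}(x) \subset \{r \leq \rho\tau\}$: any $x' \in N$ with $|x' - x| < h - \rho\epsilon$ and $r(x') > \rho\tau$ would, by the Hausdorff hypothesis, produce $w \in \mfC$ with $|w - x'| \leq \rho\epsilon$ and hence $|w - x| < h$, contradicting $d(x, \mfC) = h$. Combining volume monotonicity $\cH^n(N \cap B_{h - \rho\epsilon}(x)) \geq \omega_n(h - \rho\epsilon)^n$ (valid since $N$ is area-minimizing in the larger ball $B_{2\rho}$) with the mass bound above yields $d(x, \mfC) \leq \rho\epsilon + C\rho(E + \tau^{n-k} + \delta)^{1/n}$.

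The main technical obstacle is that $B_{h - \rho\epsilon}(x)$ may fail to sit inside $B_\rho$ when $x$ lies close to $\partial B_\rho$. I would resolve this by using the maximal admissible radius $s := \min(h - \rho\epsilon,\, \rho - |x|)$ and observing that whenever $s = \rho - |x|$ dominates, $|x|$ must itself lie in a thin shell near $\partial B_\rho$ on which $d(x, \mfC)$ is controlled either directly by the Hausdorff hypothesis (if $r(x) > \rho\tau$ there) or by projection onto $\mfC$ as in the $\mfC$-side argument. Taking the maximum of the two one-sided bounds then delivers the stated estimate.
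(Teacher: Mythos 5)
Your central mechanism is the same as the paper's: for $x\in N\cap \bar B_\rho$ with $h=\mathrm{dist}(x,\mfC)>\rho\epsilon$, the ball $B_{h-\rho\epsilon}(x)$ is disjoint from $N\cap \bar B_\rho\cap\{r>\rho\tau\}$, monotonicity forces mass at least $\omega_n(h-\rho\epsilon)^n$ into it, and this mass must be accounted for by $N\cap B_\rho\cap\{r\le\rho\tau\}$, whose measure you correctly bound by $C\rho^n(E+\tau^{n-k}+\delta)$ using the volume hypothesis together with $\cH^n(\mfC\cap B_\rho\cap\{r\le\rho\tau\})\le C\rho^n\tau^{n-k}$. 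Your separate treatment of $\sup_{y\in\mfC\cap\bar B_\rho}\mathrm{dist}(y,N)$ is correct and is a direction the paper leaves implicit.

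The gap is in your boundary case. When $s=\rho-|x|<h-\rho\epsilon$ you conclude only that $x$ lies within $C\rho(E+\tau^{n-k}+\delta)^{1/n}$ of $\partial B_\rho$, and you then assert that $\mathrm{dist}(x,\mfC)$ is controlled ``by projection onto $\mfC$'' when $r(x)\le\rho\tau$. That projection step is only valid for points \emph{of} $\mfC$, whose $\mathbb{C}^k$-coordinates are real and whose $\mathbb{C}^{n-k}$-coordinates lie on $\Cone(\Sigma)$. A point $x=(z,z')\in N$ with $|z'|\le\rho\tau$ carries no a priori bound on $|\mathrm{Im}\,z|$ (nor on the distance from $z'$ to $\Cone(\Sigma)$), so it can sit in the tube near $\partial B_\rho$ and still be far from $\mfC$; the volume and Hausdorff hypotheses only see the region $\{r>\rho\tau\}$ and do not exclude this, and monotonicity centered at such an $x$ only produces mass somewhere in $B_{2\rho}$, which is not controlled. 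This is precisely where the paper uses the two hypotheses you never invoke, namely that $N$ is connected and $0\in N$ (with $0\in\mfC$ as well, so $\mathrm{dist}(\cdot,\mfC)$ vanishes at $0$ and is continuous along $N$): connectedness is used to reduce to a point $p$ with $\mathrm{dist}(p,\mfC)=\gamma$ and $\mathrm{dist}(p,\partial B_\rho)>\gamma$, so that $B_{\gamma-\rho\epsilon}(p)\subset B_\rho$ and the interior argument closes. You should replace your shell argument by this reduction, or else prove the estimate on a slightly shrunken ball, which is all that is needed where the lemma is applied.
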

\begin{proof}
The proof is straightforward.  By rescaling we may assume $\rho=1$.  Suppose there exists a point $p\in N$ such that ${\rm dist}(p, \mfC)= \gamma > \epsilon$. Since $N$ is connected we assume that ${\rm dist}(p, \del B_{1}) >\gamma$.  Consider the ball $B_{\gamma-\epsilon}(p) \subset B_{1}$.  Clearly $B_{\gamma-\epsilon}(p) $ is disjoint from $N\cap B_{1}\cap\{r>\tau\}$.  Thus, from volume monotonicity we have
\[
\begin{aligned}
\cH^{n}(N\cap B_{1}) &\geq \omega_{n}(\gamma-\epsilon)^{n} + \cH^{n}(N\cap B_{1}\cap \{r>\tau\})\\
&\geq \omega_{n}(\gamma-\epsilon)^{n}+ \cH^{n}(\mfC \cap B_{1}) - C\tau^{n-k}-\delta
\end{aligned}
\]
for a constant $C$ depending only on $\mfC$.  Reorganizing yields the result.
\end{proof}

Next we prove Theorem~\ref{thm: main2}.

\begin{proof}[Proof of Theorem~\ref{thm: main2}]
The proof is similar to the proof of Theorem~\ref{thm: main1}, making use of the volume excess decay estimate in Proposition~\ref{prop: decayOfVolEx}, and exploiting the $b=+\infty$ case of Proposition~\ref{prop: fDecay}. We only sketch the argument. We modify the induction statement $I(s_0)$ as follows:
\begin{itemize}
\item Replace the rate function $\phi$ with the power law decay
\[
\phi(s)= 10^{-s}\epsilon
\]
\item Clause $(1)$ of the induction statement remains the same.
\item To clause $(2)$ we add the estimate
\begin{equation}\label{eq: inductionVolDecay}
{\rm VolEx}_{N_s}(2\rho_{s}) \leq \underline{C}^2\phi(s)
\end{equation}
\end{itemize}

To simplify the exposition, let us denote by $\beta_{s} = \beta - Av_{N_{s}}(\beta, 4\rho_s)$ and
\[
\begin{aligned}
\mathcal{B}_s&:= \|\underline{\beta}_{s}, \underline{y}\|^2_{L^2(N_{s}\cap B_{4\rho_s})}\\
\mathcal{F}_s &:= \|\underline{f}_s\|^2_{L^{2}}(2\rho_s,\tau_0)\\
\mathcal{V}_{s} &:= {\rm VolEx}_{N_{s}}(2\rho_s)
\end{aligned}
\]
where $f_s$ is the normalized potential relative to our choice of $\beta_s$. In case the assumptions of Lemma~\ref{lem: case1} hold we obtain from the same argument
\[
\cB_{s} \leq \frac{1}{100} \cB_{s-1}, \quad \text{ and } \quad \cF_{s} \leq C\cB_{s-1}.
\]
for $C$ depending on $\cK, \tau_0$.  This yields the desired estimates for $\uC$ sufficiently large depending on $\cK, \tau_0$. On the other hand, from Proposition~\ref{prop: decayOfVolEx} we have
\[
\cV_{s} \leq C(\cF_{s-1}+\cB_{s-1}+ \cV_{s-1}^{\frac{n}{n-1}})
\]
for a constant $C$ depending on $\cK$. Now by the assumption of Lemma~\ref{lem: case1} we have $\cF_{s-1} \leq \hat{\delta} \cB_{s-1}$ for $\hat{\delta} \ll1$.  Thus we get
\[
\cV_{s} \leq 2C \cB_{s-1} +C\cV_{s-1}^{\frac{n}{n-1}} \leq 20C \phi(s) + CV_{s-1}^{\frac{1}{n-1}} \underline{C}\phi(s)
\]
Now provided $\epsilon$ is sufficiently small depending on $\uC$, and $\uC$ is sufficiently large depending on $C$ (and hence on $\cK$) we have $\cV_{s} \leq \uC^2\phi(s)$ as desired.

We replace Lemma~\ref{lem: case2} with a slightly modified statement

\begin{lem}\label{lem: case2'}
Suppose that $I(s-1)$ holds and that
\[
\begin{aligned}
\hat{\delta} \cB_{s-1} &\leq \cF_{s-1}\\
\cB_{s-1} +\cF_{s-1} &\leq C_{2}^{-1}\cV_{s-1}
\end{aligned}
\]
Then $I(s)$ holds provided $C_{2}=10^4 \uC^2$ and  $\uC$ is chosen sufficiently large depending on $\cK, \tau_0$, and $\epsilon$ is chosen sufficiently small depending on $\uC, \cK, \tau_0$.
\end{lem}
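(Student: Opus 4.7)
The plan is to follow the template of Lemma~\ref{lem: case2} almost verbatim, with the essential new ingredient being an application of Proposition~\ref{prop: decayOfVolEx} to propagate the new inductive bound~\eqref{eq: inductionVolDecay} on the volume excess. Set $\rho_s = \tfrac{1}{4}\rho_{s-1}$ and take $N_s = N_{s-1}$, $\mfC_s = \mfC_{s-1}$ (so $a_s = 0$, $q_s = 0$), which makes clause (1) of the modified induction statement automatic.

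For the bound on $\cB_s$, I would apply the mean-value inequality to the subharmonic functions $\beta_{s-1}^2, y_i^2$ (Lemma~\ref{lem: subharmonicFunctions}) to obtain $\cB_s \leq 4^4 \cB_{s-1}$, where the $L^2$ norm of $\beta_s = \beta_{s-1} - Av_{N_s}(\beta_{s-1}, 4\rho_s)$ is controlled by that of $\beta_{s-1}$ since subtraction of the average decreases the $L^2$ norm. Combined with the hypothesis $\cB_{s-1} \leq C_2^{-1}\cV_{s-1} \leq C_2^{-1}\uC^2 \phi(s-1) = 10\, C_2^{-1}\uC^2 \phi(s)$, and the choice $C_2 = 10^4 \uC^2$, this yields $\cB_s \leq \phi(s)$. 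For $\cF_s$, I would invoke Proposition~\ref{prop: propOfSmall} (with $\gamma = 5/6$) to extend $f_{s-1}$ from $\mfC\cap B_{2\rho_{s-1}}\cap\{r>2\rho_{s-1}\tau_0\}$ to the smaller scale, producing both the $L^2$ bound $\|\underline{f}_{s-1}\|^2_{L^2}(2\rho_s,\tau_0) \leq C(\cB_{s-1}+\cF_{s-1}) \leq 2CC_2^{-1}\cV_{s-1}$ and the $C^2$ estimate~\eqref{eq: inductiveEstdf} provided $\epsilon$ is small. Subtracting the constant $Av_{N_s}(\beta_{s-1},4\rho_s)$ from $f_{s-1}$ to obtain $f_s$ only adds an error of order $\cB_{s-1}$ (by~\eqref{eq: controlOfAverages}), so $\cF_s \leq C' C_2^{-1}\cV_{s-1} \leq \uC \phi(s)$ for $\uC$ large depending on $\cK,\tau_0$.

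The new step is the volume excess: since $n-k \geq 5$ the hypotheses of Proposition~\ref{prop: decayOfVolEx} hold at scale $\rho_{s-1}$, yielding
\[
{\rm VolEx}_{N_{s-1}}(\rho_{s-1}) \leq C\bigl(\cF_{s-1} + \cB_{s-1} + \cV_{s-1}^{n/(n-1)}\bigr).
\]
Using volume monotonicity to pass from $\rho_{s-1}$ to $2\rho_s = \tfrac{1}{2}\rho_{s-1}$ and $SU(n)$-invariance of the volume excess (here trivially, since $N_s=N_{s-1}$), together with the hypothesis $\cF_{s-1}+\cB_{s-1} \leq C_2^{-1}\cV_{s-1}$, gives
\[
\cV_s \;\leq\; C\bigl(C_2^{-1} + \cV_{s-1}^{1/(n-1)}\bigr)\cV_{s-1}.
\]
Choosing $C_2 = 10^4\uC^2$ large enough that $CC_2^{-1} \leq 1/20$, and $\epsilon$ small enough (depending on $\uC,\cK$) that $\cV_{s-1} \leq \uC^2\epsilon$ satisfies $C\cV_{s-1}^{1/(n-1)} \leq 1/20$, we obtain $\cV_s \leq \tfrac{1}{10}\cV_{s-1} \leq \uC^2\phi(s)$, verifying~\eqref{eq: inductionVolDecay}.

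The main obstacle, as in the original $b<\infty$ case, is purely the constant bookkeeping — ensuring that the constants from Propositions~\ref{prop: propOfSmall} and~\ref{prop: decayOfVolEx} (both depending only on $\cK, \tau_0$) are absorbed by the choices $C_2 = 10^4\uC^2$, $\uC$ large depending on $\cK,\tau_0$, and $\epsilon$ small depending on $\uC,\cK,\tau_0$. The qualitative point is simply that the nonlinear term $\cV_{s-1}^{n/(n-1)}$ from Proposition~\ref{prop: decayOfVolEx} is negligible once $\epsilon$ is small, so the volume excess contracts by a definite geometric factor at each step, matching the prescribed power-law rate $\phi(s) = 10^{-s}\epsilon$.
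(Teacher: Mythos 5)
Your proposal is correct and follows essentially the same route as the paper: apply Proposition~\ref{prop: decayOfVolEx} at scale $\rho_{s-1}$ together with the hypothesis $\cB_{s-1}+\cF_{s-1}\leq C_2^{-1}\cV_{s-1}$ to contract the volume excess by a factor $\frac{1}{10}$, and then run the argument of Lemma~\ref{lem: case2} (mean-value inequality for $\cB_s$, Proposition~\ref{prop: propOfSmall} for $\cF_s$) with $\cV_{s-1}$ in place of the ${\rm VolEx}_N(2\rho_{s-1},2^{1-b}\rho_{s-1})$ bound. The constant bookkeeping via $C_2=10^4\uC^2$ and the absorption of the nonlinear term $\cV_{s-1}^{n/(n-1)}$ for $\epsilon$ small match the paper's proof.
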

\begin{proof}
From Proposition~\ref{prop: decayOfVolEx} we get
\[
\cV_{s} \leq C(\cF_{s-1}+\cB_{s-1} + \cV_{s-1}^{\frac{n}{n-1}}) \leq C\cV_{s-1}(C_{2}^{-1} + \cV_{s-1}^{\frac{1}{n-1}})
\]
for $C$ depending only on $\cK$.  Choosing $C_2=10^4 \uC^2$, and $\uC$ large depending on $\cK$ and $\epsilon$ small depending on $\uC, \cK$ we can arrange
\[
C\cV_{s-1}(C_{2}^{-1} + \cV_{s-1}^{\frac{1}{n-1}}) \leq \frac{1}{10}\cV_{s-1}
\]
yielding the desired decay.  The remainder of the argument proceeds in the same way as in Lemma~\ref{lem: case2} to obtain
\[
\cB_{s} \leq 4^{4}C_{2}^{-1} \cV_{s-1} \leq \phi(s)
\]
by our choice of $C_{2}$.  Finally, by Proposition~\ref{prop: propOfSmall} we have
\[
\cF_{s} \leq C(\cF_{s-1}+\cB_{s-1}) \leq \frac{C}{C_{2}}\cV_{s-1} 
\]
for $C$ depending only on $\cK, \tau_0$. Now by the induction assumption and our choice of $C_2$ we have $\cF_{s} \leq \frac{C}{10^3} \phi(s)$ and hence the desired conclusion holds provided $\uC$ is chosen large depending on $\cK, \tau_0$.
\end{proof}

Finally, we replace Lemma~\ref{lem: case3} with
\begin{lem}\label{lem: case3'}
Suppose that $I(s-1)$ holds and
\[
\begin{aligned}
\cB_{s-1} &\leq\hat{\delta}^{-1}\cF_{s-1}\\
 \cV_{s-1} &\leq C_{2}(\cB_{s-1} + \cF_{s-1})
 \end{aligned}
\]
Then, $I(s)$ holds provided $C_{2}=10^{4}\uC^2$ and $\uC$ is chosen sufficiently large depending on $\cK, \tau_0$, and $\epsilon$ is sufficiently small depending on $\uC, \cK, \tau_0$.
\end{lem}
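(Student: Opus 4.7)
The plan is to apply Proposition~\ref{prop: fDecay} at scale $\rho_{s-1}$ with the parameter choice $b=+\infty$, which (by $c(\cK,\tau,\infty)=0$) eliminates the volume-excess contribution from the decay conclusions and so yields the pure $\tfrac{1}{10}$-decay needed for the power-law rate $\phi(s)=10^{-s}\epsilon$. First I verify the three hypotheses of that proposition. Clause $(i)$ is immediate from $I(s-1)$. Clause $(ii)$ is equivalent to $\cB_{s-1}^{1/2}\leq C_1\tau_0^{-2}\cF_{s-1}^{1/2}$, which by the first lemma hypothesis $\cB_{s-1}\leq \hat{\delta}^{-1}\cF_{s-1}$ holds with $C_1=\hat{\delta}^{-1/2}\tau_0^2$. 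For clause $(iii)$ with $b=+\infty$ one has ${\rm VolEx}_{N_{s-1}}(2\rho_{s-1},0)=\cV_{s-1}$, and the two lemma hypotheses combined give
\[
\cV_{s-1} \leq C_2\bigl(\cB_{s-1}+\cF_{s-1}\bigr) \leq C_2(1+\hat{\delta}^{-1})\cF_{s-1},
\]
verifying $(iii)$ with an adjusted constant; here we use $SU(n)$-invariance of volume excess to pass from $N$ to $N_{s-1}$.

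Setting $\rho_s=(\theta/2)\rho_{s-1}$ and $N_s=\exp(-a_s)N_{s-1}$, $\mfC_s = \bR^k\times \Cone(\exp^{\cM}_{\Sigma_{s-1}}q_s)$ with $(a_s,q_s)$ supplied by Proposition~\ref{prop: fDecay} and satisfying $|a_s|+\|q_s\|\leq C\cF_{s-1}^{1/2}$, clause $(1)$ of $I(s)$ follows once $\uC$ is taken large enough depending on $\cK,\tau_0$. Since $c(\infty)=0$, conclusions $(c_1),(c_2)$ degenerate to
\[
\cB_s \leq \tfrac{1}{10}\cB_{s-1}, \qquad \|\underline{\hat f- Av}\|^2_{L^2}(\rho_s,\tau_0)\leq \tfrac{1}{10}\cF_{s-1},
\]
together with $\sup r^{-1}|d\hat f|+|D^2\hat f|\leq\eta$. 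Renormalizing $\beta_s:=\beta-Av_{N_s}(\beta,4\rho_s)$ and absorbing the resulting averaged constant through the mean-value inequality applied to $\beta^2$ (exactly as in the proof of Lemma~\ref{lem: case3}) yields $\cB_s\leq\uC^2\phi(s)$, $\cF_s\leq\uC\phi(s)$ and the graph property $P_1(\eta,\tau_0,\uC\phi(s))$, establishing all of clause $(2)$ except the new bound \eqref{eq: inductionVolDecay}.

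The final step is to verify \eqref{eq: inductionVolDecay} for $\cV_s$. I apply Proposition~\ref{prop: decayOfVolEx} to $N_{s-1}$, using that the hypotheses $P_1,P_2(5/6),P_3$ of that proposition are exactly what $I(s-1)$ provides (and that ${\rm VolEx}$ is $SU(n)$-invariant). This gives a bound of the form
\[
\cV_s \leq C\bigl(\cF_{s-1}+\cB_{s-1}+\cV_{s-1}^{n/(n-1)}\bigr)\leq C(1+\hat{\delta}^{-1})\uC^2\phi(s-1) + C\uC^{2n/(n-1)}\phi(s-1)^{n/(n-1)}.
\]
The superlinear term gains a factor $\phi(s-1)^{1/(n-1)}\leq \epsilon^{1/(n-1)}$, which is as small as desired by shrinking $\epsilon$ depending on $\uC,\cK$; taking $\uC$ large depending on $\cK,\tau_0$ then absorbs the linear contribution into $\tfrac{1}{10}\uC^2\phi(s-1)=\uC^2\phi(s)$, yielding $\cV_s\leq\uC^2\phi(s)$ as required.

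The main obstacle is the book-keeping around the accumulated $SU(n)$-drifts $\exp(-a_1)\cdots\exp(-a_{s-1})$ and the successive perturbations $\mfC_\ell$ of the reference cylinder: one must check that ${\rm VolEx}_{N_{s-1}}$ measured against $\mfC_{s-1}$ is comparable, up to errors summable across scales, to the quantity ${\rm VolEx}_N$ appearing in the lemma's hypothesis, and that the choice of normalization $\beta_s$ at each scale is compatible with the one implicit in Proposition~\ref{prop: fDecay}, so that the output of that proposition really feeds back into $I(s)$ with the same pair $(\uC,\phi)$ as used at step $s-1$.
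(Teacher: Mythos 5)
Your proposal is correct and follows essentially the same route as the paper: the $b=+\infty$ case of Proposition~\ref{prop: fDecay} (with hypotheses $(i)$--$(iii)$ verified exactly as you do from $I(s-1)$ and the two lemma hypotheses) gives the $\tfrac{1}{10}$-decay of $\cB_s$ and $\cF_s$, and then Proposition~\ref{prop: decayOfVolEx} together with the induction hypothesis controls $\cV_s$, the superlinear term being absorbed by shrinking $\epsilon$ and the linear term by enlarging $\uC$. One small slip to fix: since $\cF_{s-1}\leq \uC\phi(s-1)$ and $\cB_{s-1}\leq \phi(s-1)$ by \eqref{eq: inductiveEstfL2} and \eqref{eq: inductiveEstBetaY}, the linear contribution is $C(1+\hat{\delta}^{-1})\uC\phi(s-1)$ rather than $C(1+\hat{\delta}^{-1})\uC^{2}\phi(s-1)$ --- with the factor $\uC^{2}$ as written it could not be absorbed into $\tfrac{1}{10}\uC^{2}\phi(s-1)$ by taking $\uC$ large, but with the correct power your absorption argument closes exactly as in the paper.
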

\begin{proof}
The $b=+\infty$ case of Proposition~\ref{prop: fDecay} implies
\[
\cB_{s} \leq \frac{1}{10} \cB_{s-1} \quad \text{ and } \quad \cF_{s} \leq \frac{1}{10} \cF_{s-1}
\]
thus it suffices to establish the decay of the volume excess.  Since $\rho_{s} = \frac{\theta}{2} \rho_{s-1}$ in this case and $\theta < \frac{1}{2}$ we have
\[
\cV_s \leq {\rm VolEx}_{N}(\frac{1}{2}\rho_{s-1})
\]
and by Proposition~\ref{prop: decayOfVolEx} we have
\[
{\rm VolEx}_{N}(\frac{1}{2}\rho_{s-1}) \leq C\left( \cF_{s-1} +\cB_{s-1} + \cV_{s-1}^{\frac{n}{n-1}}\right).
\]
From the assumptions of the lemma we obtain
\[
\cV_{s} \leq C(1+ C_2\cV_{s-1}^{\frac{1}{n-1}})(\cF_{s-1}+\cB_{s-1}) \leq C(1+ C_2\cV_{s-1}^{\frac{1}{n-1}})(\uC+1)\phi(s-1).
\]
Choosing $\uC$ is large depending on $\cK, \tau_0$ and then $\epsilon$ small depending on $\uC$ we can arrange that $C_2\cV_{s-1}^{\frac{1}{n-1}} \leq 1$, and $\cV_{s} \leq \frac{1}{10}\uC^2\phi(s-1)$ which is the desired conclusion.
\end{proof}

It follows that $I(s)$ holds for all $s$ and from the improved rate estimate and the arguments in the proof of Theorem~\ref{thm: main1} we obtain the desired polynomial convergence.
\end{proof}

\end{document}